\documentclass[10pt]{siamltex}
\usepackage{amsmath,amsfonts,amscd,amssymb,bm,cite}
\usepackage{mathrsfs,listings,url}
\usepackage{graphics,color,ulem}
\usepackage{float}
\usepackage[titletoc,page]{appendix}
\usepackage{subfigure}
\usepackage[colorlinks=true, bookmarksopen,
pdfauthor={CST},
pdfcreator={pdftex},
pdfsubject={algorithms},
linkcolor={blue},
anchorcolor={black},
citecolor={firebrick},
filecolor={magenta},
menucolor={black},
pagecolor={red},
plainpages=false,pdfpagelabels,
urlcolor={db} ]{hyperref}
\usepackage[capitalise]{cleveref}
\usepackage{comment}
\usepackage{verbatim}
\usepackage{marginnote}
\usepackage{float}
\usepackage[makeroom]{cancel}
\usepackage[pdftex]{graphicx}
\usepackage[section]{placeins}
\usepackage{mathptmx}
\usepackage{cases}
\usepackage{subeqnarray}
\usepackage{hhline}
\usepackage{xcolor}
\usepackage{tikz}
\usepackage{caption} 
\usepackage{lmodern}
\usepackage{algorithm}
\usepackage{algpseudocode}

\usetikzlibrary{arrows}
\usetikzlibrary{positioning,shapes,snakes,calc,decorations,decorations.markings}

\newtheorem{sche}{Scheme}

\includecomment{confidential}
\excludecomment{confidential}

\restylefloat{table}
\allowdisplaybreaks
\definecolor{db}{rgb}{0.0470,0,0.5294}
\definecolor{dg}{rgb}{0.0,0.392,0.0}
\definecolor{firebrick}{rgb}{0.698,0.133,0.133}
\definecolor{bl}{rgb}{0.0,0.0,0.0}
\definecolor{linen}{rgb}{0.980,0.941,0.902}
\definecolor{ivory}{rgb}{1.0,1.0,0.941}
\definecolor{aliceblue}{rgb}{0.941,0.973,1.0}
\definecolor{beige}{rgb}{0.961,0.961,0.863}
\definecolor{tan}{rgb}{0.824,0.706,0.549}
\definecolor{lightsteelblue}{rgb}{0.690,0.769,0.871}
\definecolor{paleturquoise}{rgb}{0.686,0.933,0.933}
\definecolor{lightblue}{rgb}{0.678,0.847,0.902}
\definecolor{skyblue}{rgb}{0.529,0.808,0.922}
\definecolor{palegoldenrod}{rgb}{0.933,0.910,0.667}
\definecolor{lightgoldenrod}{rgb}{0.933,0.867,0.510}
\definecolor{lightyellow}{rgb}{1.0,1.0,0.878}
\definecolor{yellow}{rgb}{1.0,1.0,0.0}
\definecolor{lightyellow1}{rgb}{1.0,1.0,0.878}
\definecolor{lemonchiffon}{rgb}{1.0,0.980,0.804}
\definecolor{myyellow}{rgb}{1,1,.9}
\definecolor{darkgreen}{rgb}{0.0,0.392,0.0}
\definecolor{darkviolet}{rgb}{0.580,0.0,0.827}
\definecolor{lightsalmon}{rgb}{1.0,0.627,0.478}
\definecolor{orange}{rgb}{1.0,0.647,0.0}
\definecolor{darkblue}{rgb}{0.00,0.00,0.55}
\definecolor{mygrey}{rgb}{.412,.428,.444}
\numberwithin{equation}{section}
\Crefname{table}{Table}{Tables}
\Crefname{figure}{Figure}{Figures}

\begin{document}

	\title{\large{L\lowercase{ong-time} $L^2 \& H^1$\lowercase{-stability of the} f\lowercase{amily of} DLN m\lowercase{ethods for the} T\lowercase{wo-dimensional} i\lowercase{ncompressible} N\lowercase{avier}-S\lowercase{tokes} e\lowercase{quations}}}
	\author{
		Isabel Barrio Sanchez\thanks{
			Department of Mathematics, University of Pittsburgh, Pittsburgh, PA 15260,
			USA. Email: \href{mailto:ISB42@pitt.edu}{ISB42@pitt.edu}. 
			Partially supported by NSF grant DMS-2208220.}
		\and 
		Wenlong Pei\thanks{
			Department of Mathematics, University of Pittsburgh, Pittsburgh, PA 15260,
			USA. Email: \href{mailto:wep17@pitt.edu}{wep17@pitt.edu}.} 
		\and
		Catalin Trenchea\thanks{
			Department of Mathematics, University of Pittsburgh, Pittsburgh, PA 15260,
			USA. Email: \href{mailto:trenchea@pitt.edu}{trenchea@pitt.edu}. 
			Partially supported by NSF grant DMS-2208220. 
	}}
	\date{\emty}
	\maketitle

	\begin{abstract}		
		In this report, we study the long-time stability of the family of one-leg DLN methods for the two-dimensional incompressible Navier-Stokes equations. 
		The family of DLN methods (with one parameter $\theta$), non-linear energy stable ($G$-stable) and second-order accurate under arbitrary time grids, has been widely applied to the simulations of various fluid models with success. 
		We derive a new version of the $G$-stability identity for the family of DLN methods under uniform time grids and mild time constraints. 
		Then we utilize this crucial auxiliary tool and the discrete uniform Gr\"onwall inequality lemma to prove the uniform-in-time stability 
		of the numerical solutions. 
		Essentially, the bounds are independent of the time interval and the initial conditions, consistent with the theories of the continuous case. 
	\end{abstract}

	\begin{keywords}
		Two-dimensional Navier-Stokes equations, the DLN methods, $G$-stability, uniform-in-time stability 
	\end{keywords}
	
	\begin{AMS}
		65M12, 35Q30, 76D05, 76M25
	\end{AMS}
	
	\section{Introduction}  
		The incompressible Navier-Stokes equations (NSE), which describe the relationship between velocity, pressure, and body force in viscous fluid flows, have been applied to numerous fields, including weather prediction, climate modeling, blood flow simulation, and oceanic fluid dynamics \cite{FQV09_Springer,Gun03_SIAM,Tem77_NHPC,Val06_CUP,War11_CUP,WP05_USB}. 
		Herein, we consider the imcompresible NSE on the domain $\Omega \subset \mathbb{R}^2$ with boundary $\partial\Omega$ of class $C^2$
		\begin{align}
			& 
			\label{eq:NSE}
			\tag{NSE}
			\begin{array}{ll}
				u_t + (u\cdot \nabla) u - \nu \Delta u + \nabla p = f,
				\\
				\nabla \cdot u = 0,
			\end{array}
		\end{align}
		where $u=(u_1,u_2)$ is the velocity, $p$ the pressure, $\nu$ the kinematics viscosity, and $f \in L^{\infty}\big(\mathbb{R}_+; (L^{2}(\Omega))^2 \big)$ body forces applied to the fluid.
		The equations are supplemented with the initial condition $u(x,0) = u_0(x) \in (H_{0}^{1}(\Omega))^{2}$ and non-slip boundary condition $u |_{\partial \Omega} = 0$. 
		As known to all, the model energy in \eqref{eq:NSE} is uniformly bounded in time \cite{FMRT01_CUP,Tem97_Springer},
		\begin{align}
			\| u(t) \|^2 \leq \| u_{0} \|^2 e^{-\nu \lambda_1 t} + \frac{1}{\nu^2 \lambda_1^2} (1 - e^{-\nu \lambda_1 t}) \| f \|_{L^{\infty}(\mathbb{R}_+; L^{2}(\Omega)^2)}^2,
			\label{eq:u-L2-bound}
		\end{align}
		where $\| \cdot \|$ is the $L^2(\Omega)$-norm and $\lambda_1$ is the smallest eigenvalue of the Stokes operator. 
		Moreover, $u$ can be uniformly bounded in $H^1$-norm by a function of the initial condition $u_0$ and body force $f$
		\begin{align}
			\| \nabla u(t) \|^2 \leq K\big( \| \nabla u_0\|, \| f \|_{L^{\infty}(\mathbb{R}_+; (L^{2}(\Omega))^2)} \big). 
			\label{eq:u-H1-bound}
		\end{align}
		As the time reaches a certain level $T_{\ast}(\| \nabla u_0\|, \| f \|_{L^{\infty}(\mathbb{R}_+; L^{2}(\Omega)^2)})$, 
		the dependence of bounds on the initial condition $u_0$ can be removed \cite{Tem97_Springer}, i.e. for $t \geq T_{\ast}$,
		\begin{align}
			\| u(t) \|_{{H^1(\Omega)}^2}^2 \leq K\big( \| f \|_{L^{\infty}(\mathbb{R}_+; L^{2}(\Omega)^2)} \big),
			\label{eq:u-H1-bound-2}
		\end{align}
		which implies the dependence of initial conditions is transient due to the existence of a continuous global attractor. 
		
		For reliable simulation in practice, it's essential to select numerical algorithms that possess the behavior of their continuous counterparts. 
		Inspired by the work of Tone and Wirosoetisno for the fully implicit backward Euler time discretization \cite{TW06_SIAMNA}, great efforts for both semi-discretization in time \cite{GTWWW12_SIAMNA,RT23_AML,STP26_arXiv,WWZ26_arXiv,Wan10_MC,Wan12_NM} and fully-discretized approximation \cite{AKR17_NMPDE,ART15_AML,BS19_AMC,CSZ18_JSC,CW16_SIAMNA,HOR17_NM,She90_AA,Ton07_NMPDE,XW22_Entropy,ZY22_JSC} have been devoted to the proof of this significant result. 
		
		The family of methods of Dahlquist, Liniger and Nevanlinna \cite{DLN83_SIAMNA} with one parameter $\theta \in [0,1]$ (herein the DLN methods), known for non-linear stable ($G$-stable) and second order accurate under arbitrary time grids, has demonstrated its success in approximations of typical dynamic systems \cite{LPT23_ACSE} and various fluid models such as the unsteady Stokes-Darcy model \cite{QHPL21_JCAM,QCWLL23_ANM}, incompressible NSE \cite{LPQT21_NMPDE,Pei24_NA,Pei25_NA}, Smagrinsky model \cite{SP24_IJNAM}, phase field model \cite{CLPX25_JSC} and the fourth-order active fluids \cite{ZGPZ26_JSC,ZGPZ26_ANM}. 
		We derive a new version of $G$-stability identity for the family of the constant time-stepping DLN methods (with $\theta \in (0,1)$), which is a pivotal auxiliary tool for the proof of long-time stability results in \eqref{eq:u-L2-bound}-\eqref{eq:u-H1-bound-2}. 
		Hence, we employ the family of fully implicit DLN methods (with $\theta \in (0,1)$) for the two-dimensional incompressible NSE under uniform time grids and prove that the uniform-in-time bounds for the solutions are irrelevant to initial conditions, using only the semi-discretization in time to avoid the Courant-Friedrichs-Lewy (CFL) condition that most fully discrete formulations incur.
		
		Given the uniform time grids $\{t_n\}$ with the constant step size $\Delta t = t_n - t_{n-1}$, 
		the family of DLN schemes for \eqref{eq:NSE} (with parameter $\theta \in (0,1)$) for the initial value problem of form $y'(t) = g(t,y(t)), \ y(0) = y_0$ reads
		\begin{align}  \label{eq:1legDLN}
			\sum_{\ell =0}^{2}{\alpha _{\ell }}y_{n-1+\ell } 
			= \Delta t 
			g\Big( \sum_{\ell =0}^{2}{\beta _{\ell}\, }t_{n-1+\ell } , \sum_{\ell =0}^{2}{\beta _{\ell }\,}y_{n-1+\ell} \Big) , \qquad n=1,\ldots		 
			\tag{DLN}
		\end{align}%
		where $\Delta t$ is the time step, $y_n$ represents the DLN solution at $t_n$, and the $\{\alpha _{\ell },\beta _{\ell }\}_{\ell =0,1,2}$
		coefficients in \eqref{eq:1legDLN} are
		\begin{align*}
			\left(
			\begin{array}{ll}
				\alpha _{2} & \beta _{2}
				\vspace{0.2cm} \\
				\alpha _{1} & \beta _{1}
				\vspace{0.2cm} \\
				\alpha _{0} & \beta _{0}
			\end{array}%
			\right) =\left(
			\begin{array}{lll}
				\frac{1}{2}(\theta +1) &  & \frac{1}{4} (2  + \theta - {\theta }^{2} )\vspace{0.2cm%
				} \\
				-\theta  &  & \frac{1}{2} {\theta }^{2} \vspace{0.2cm} \\
				\frac{1}{2}(\theta -1) &  & \frac{1}{4}  ( 2  - \theta - {\theta }^{2} )%
			\end{array}%
			\right).		
		\end{align*}
		For any sequence $\{z_n\}_{n}^{\infty}$, we denote $z_{n,\beta} = \sum_{\ell=0}^2 \beta_{\ell} z_{n-1+\ell}$ for convenience. 
		\begin{sche}[Fully-implicit DLN algorithms for \eqref{eq:NSE}]
			\label{sche:DLN-NSE}
			Given two previous solutions of velocity and pressue $\{u_n, p_{n} \}$, $\{u_{n-1}, p_{n-1} \}$, we solve $u_{n+1}$ and $p_{n+1}$ by 
			\begin{align}
				\label{eq:DLN-NSE}
				\begin{split}
					\begin{cases}
						\displaystyle \frac{1}{\Delta t} \sum_{\ell=0}^2 \alpha_{\ell} u_{n-1+\ell} - \nu \Delta u_{n,\beta} + u_{n,\beta} \cdot \nabla u_{n,\beta} + \nabla p_{n,\beta} = f(t_{n,\beta}), \\
						\displaystyle \nabla \cdot u_{n,\beta} = 0.
					\end{cases}
				\end{split}
			\end{align}
		\end{sche}

		For the remainder of the report, we mainly focus on the proof of long-time stability of \Cref{sche:DLN-NSE} in $L^2(\Omega)$ and $H^1(\Omega)$ norm, showing that the discrete counterparts of \eqref{eq:u-L2-bound}-\eqref{eq:u-H1-bound-2} hold.
		The report is organized as follows. 
		In Section \ref{sec:note}, we present notations and necessary preliminaries, including a new energy identity as an essential tool for the deviation of the discrete global attractor. 
		Rigorous proof of long-time stability of \Cref{sche:DLN-NSE} in $L^2(\Omega)$- and $H^1(\Omega)$-norm are provided in Section \ref{sec:stability-L2} and Section \ref{sec:stability-H1} respectively. 
		Section \ref{sec:conclusion} summarizes the main results.

		\section{Notations and preliminaries} \label{sec:note}
		Given the domain $\Omega \subset \mathbb{R}^2$, we need the following Sobolev spaces for mathematical setting of the problem
		\begin{align*}
			&V = \{ v \in H_{0}^1(\Omega)^2: \nabla v = 0 \}, \\
			&H = \{ v \in L^2(\Omega)^2: \nabla \cdot v = 0, v \cdot \vec{n} |_{\partial \Omega} = 0 \}, 
		\end{align*}
		where $\vec{n}$ is the unit vector outward on $\partial \Omega$. 
		The space $H$ is endowed with the $L^2(\Omega)$ inner product $(u, v) = \int_{\Omega} u \cdot v dx$ 
		and the corresponding norm $\| u \| = (u,u)^{1/2}$ for $u,v \in H$.   
		The space $V$ is equipped with the inner product $(\nabla u, \nabla v)$
		\begin{align*}
			(\nabla u, \nabla v) = \int_{\Omega} \sum_{i,j=1}^2 \frac{\partial u}{\partial x_i} (x) \frac{\partial v}{\partial x_i} (x) dx, 
		\end{align*}
		and the corrsponding norm $\| \nabla u \| = ( \nabla u, \nabla v)^{1/2}$ for $u,v \in V$.  
		Throughout the whole paper, we assume that the body force $f \in L^{\infty}(\mathbb{R}_{+};H)$ and denote $\| f \|_{\infty} = \| f \|_{L^{\infty}(\mathbb{R}_{+};H)}$.
		We define the bounded linear operator $A$ from $V$ into its dual space $V'$ by 
		\begin{align*}
			\langle Au, v \rangle_{V',V} = (\nabla u, \nabla v) 
		\end{align*} 
		By the regularity theory for the Stokes equation \cite{Tem77_NHPC}, the domain of $A$ on space $H$ is $D(A) = {H^2(\Omega)}^2 \cap V$. Obviously, we have the inclusions $D(A) \subset V \subset H$. 
		We denote the smallest eigenvalue of $A$ as $\lambda_1 >0$ and have Poincar\'e-Friedrichs inequality 
		\begin{align}
			\label{eq:PF-ineq} 
			\| u \| \leq \frac{1}{\sqrt{\lambda_1}} \| \nabla u \|, \qquad u \in V, 
		\end{align}
		and Ladyzhenskaya inequality in two spatial dimensions \cite{Gal11_Springer,Lad58_SPD,Lad59_CPAM}
		\begin{align}
			\label{eq:Lady-ineq}
			\| u \|_{L^{4}(\Omega)} \leq 2^{-1/4} \| u \|^{1/2} \| \nabla u \|^{1/2}, \qquad u \in V. 
		\end{align}
		We need the following $H^2$-bound for the Laplacian \cite[pp.7]{Lad69_GBSP}
		\begin{align}
			\| D^2 u \| \leq C_{\Omega} \| \Delta u \|, \qquad u \in H^2(\Omega)^2 \cap H_{0}^{1}(\Omega)^2, 
			\label{eq:equi-D2-ineq} 
		\end{align}
		where $C_{\Omega}$ is the positive constant only depending on $\Omega$ and $C_{\Omega} = 1$ if $\Omega$ is convex \cite[pp.132]{Gri85_Pitman}. 

		We denote the trilinear operator $b(u,v,w) = (u \cdot \nabla v, w)$ for any $u,v,w \in V$. 
		By Gauss divergence theorem, $b$ is skew-symmetric:
		\begin{align}
			b(u,v,w) = - b(u,w,v), \qquad b(u,v,v) = 0, \quad \forall u,v,w \in V. 
			\label{eq:b-skew-symm}
		\end{align}
		By H\"older's inequality, Ladyzhenskaya inequality in \eqref{eq:Lady-ineq} and Agmon inequality, it's easy to derive the following estimates for the operator $b$ 
		\begin{align}
			&|b(u,v,w)| \leq C_{b,2} \| u \|^{1/2} \| \nabla u \|^{1/2} \| \nabla v \| \| w \|^{1/2} \| \nabla w \|^{1/2}, 
			\quad \forall u,v,w \in V.  \label{eq:b-bound-2}
		\end{align}%
\begin{confidential}
	\color{darkblue}
	For $u \in D(A)$
	\begin{align*}
		|\langle Au, v \rangle_{V',V}| = |(\nabla u, \nabla v)| = |- (\Delta u, v) | \leq \| \Delta u \| \| v \|,
	\end{align*}
	Thus $Au \in V'$ with $(V, \| \cdot \|)$ and 
	there exists $h \in L^{2}(\Omega)$ such that $Au = h\in V \subset H$ by Riesz representation theorem. 
	For $u \in H^2(\Omega) \cap H_{0}^1(\Omega)$, by Agmon inequality
	\begin{align*}
		\| u \|_{L^{\infty}(\Omega)} \leq C \| u \|^{1/2} \| u \|_{H^2(\Omega)}^{1/2}. 
	\end{align*} 
	By H\"older's inequality and Agmon inequality
	\begin{align*}
		| b(u,v,w)| \leq \| u \|_{L^{\infty}(\Omega)} \| \nabla v \| \| w \| 
		\leq C \| u \|^{1/2} \| u \|_{H^2(\Omega)}^{1/2} \| \nabla v \| \| w \|
	\end{align*}
	By Stokes regular estimate, we have $\| u \|_{H^2(\Omega)} \leq C \| Au \|$ and thus we have first bound. 
	For second bound, by H\"older's inequality, Ladyzhenskaya inequality in \eqref{eq:Lady-ineq}
	\begin{align*}
		|b(u,v,w)| \leq \| u \|_{L^{4}} \| \nabla v \| \| w \|_{L^4} 
		\leq C \| u \|^{1/2} \| \nabla u \|^{1/2} \| \nabla v \| \| w \|^{1/2} \| \nabla w \|^{1/2}. 
	\end{align*}
	\normalcolor
\end{confidential}
		By \eqref{eq:b-bound-2}, we can define the bilinear operator $B$ from $V \times V$ into $V'$ by 
		\begin{align*}
			\langle B(u,v), w \rangle_{V',V} = b(u,v,w), \qquad \forall u,v,w \in V. 
		\end{align*}
		By the above operators we've introduced, \eqref{eq:NSE} can be written as \cite{Ler33_JMPA}
		\begin{align*}
			u_t + \nu Au + B(u,u) = f, \quad u(0) = u_0,
		\end{align*}
		and \Cref{sche:DLN-NSE} becomes 
		\begin{align}
			\frac{1}{\Delta t} \sum_{\ell=0}^2 \alpha_{\ell} u_{n-1+\ell} + \nu A u_{n,\beta} + B(u_{n,\beta}, u_{n,\beta}) = f(t_{n,\beta}). 
			\label{eq:DLN-NSE-dual}
		\end{align}

		\begin{definition}
			For $\theta \in (0,1)$, we define the symmetric semi-positive definite $G(\theta)$-matrix
			\begin{align*}
				G(\theta) = 
				\begin{pmatrix}
					\vspace{0.8mm}
					\frac{1}{4} (1 + \theta) \mathbb{I}_2 & 0 \\
					0 & \frac{1}{4} (1 - \theta) \mathbb{I}_2
				\end{pmatrix},
			\end{align*}
			where $\mathbb{I}_2 \in \mathbb{R}^{2 \times 2}$ is the identity matrix. The corresponding $G$-norm is defined as 
			\begin{align*}
				\begin{Vmatrix}
					u \\ v
				\end{Vmatrix}_{G(\theta)}^2
				= 
				[u^\top \ v^\top]  G(\theta) 
				\begin{bmatrix}
					u \\ v
				\end{bmatrix}
				= \frac{1}{4} (1 + \theta) \| u \|^2 + \frac{1}{4}(1 - \theta) \| v \|^{2}, \ \ \forall u, v \in L^{2}(\Omega).
			\end{align*}
    	\end{definition}
		\begin{lemma}
			Given the sequence $\{ y_{n} \}_{n=0}^{\infty} \subset L^{2}(\Omega)$ and $\theta \in (0,1)$ the following $G$-stability identity holds 
			\begin{align}
				\Big( \sum_{\ell=0}^2 \alpha_{\ell} y_{n-1+\ell}, y_{n,\beta} \Big)
				= 
				\begin{Vmatrix}
					y_{n+1} \\
					y_{n}
				\end{Vmatrix}_{G(\theta)}^2
				- 
				\begin{Vmatrix}
					y_{n} \\
					y_{n-1}
				\end{Vmatrix}_{G(\theta)}^2
				+ \Big\| \sum_{\ell}^{2} a_{\ell} y_{n-1+\ell} \Big\|^{2},
				\label{eq:G-stab-eq}
			\end{align}
			where the coefficients of numerical dissipation are 
			\begin{align*}
				a_1 = - \frac{\sqrt{\theta (1 - \theta^2)}}{\sqrt{2}}, \quad 
				a_2 = - \frac{1}{2} a_1, \quad a_0 = - \frac{1}{2} a_1. 
			\end{align*} 
		\end{lemma}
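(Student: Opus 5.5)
The identity \eqref{eq:G-stab-eq} is purely algebraic. Both sides are real quadratic forms in the three vectors $y_{n+1}, y_n, y_{n-1} \in L^2(\Omega)$, so I would prove it by expanding both sides with the bilinearity and symmetry of the $L^2$ inner product and matching coefficients. Each side has the form $\sum_{i,j=0}^{2} c_{ij}\,(y_{n-1+i}, y_{n-1+j})$ with $c_{ij}$ symmetric. It therefore suffices to check equality of the six coefficients attached to
\[
\|y_{n+1}\|^2,\ \|y_n\|^2,\ \|y_{n-1}\|^2,\ (y_{n+1},y_n),\ (y_{n+1},y_{n-1}),\ (y_n,y_{n-1}).
\]
These are polynomial identities in $\theta$ alone.

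\textbf{Left-hand side.} The coefficient of $\|y_{n-1+i}\|^2$ is $\alpha_i\beta_i$. The coefficient of $(y_{n-1+i},y_{n-1+j})$ for $i\neq j$ is $\alpha_i\beta_j+\alpha_j\beta_i$.

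\textbf{Right-hand side.} Write $g_2=\tfrac14(1+\theta)$ and $g_1=\tfrac14(1-\theta)$. The $G$-norm difference contributes $g_2$ to $\|y_{n+1}\|^2$, $g_1-g_2=-\tfrac{\theta}{2}$ to $\|y_n\|^2$, and $-g_1$ to $\|y_{n-1}\|^2$. It contributes nothing to the cross terms. The dissipation term (reading the sum as $\sum_{\ell=0}^2$) contributes $a_i^2$ to the squares and $2a_ia_j$ to the cross terms. Here $a_1^2=\tfrac12\theta(1-\theta^2)$ and $a_0^2=a_2^2=\tfrac18\theta(1-\theta^2)$.

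\textbf{Squares.} Checking these shows the mechanism:
\begin{itemize}
\item For $\|y_{n+1}\|^2$: $\alpha_2\beta_2=\tfrac18(1+\theta)(2+\theta-\theta^2)$, and $\tfrac14(1+\theta)+\tfrac18\theta(1-\theta^2)=\tfrac18(1+\theta)\big(2+\theta(1-\theta)\big)$. These agree.
\item For $\|y_n\|^2$: $\alpha_1\beta_1=-\tfrac12\theta^3$, and $-\tfrac{\theta}{2}+\tfrac12\theta(1-\theta^2)=-\tfrac12\theta^3$. These agree.
\item For $\|y_{n-1}\|^2$: $\alpha_0\beta_0=\tfrac18(\theta-1)(2-\theta-\theta^2)$, and $-\tfrac14(1-\theta)+\tfrac18\theta(1-\theta^2)=\tfrac18(1-\theta)(\theta^2+\theta-2)$. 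These agree after factoring $2-\theta-\theta^2=(1-\theta)(2+\theta)$.
\end{itemize}

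\textbf{Cross terms.} Here I would verify three identities:
\[
\alpha_2\beta_1+\alpha_1\beta_2=2a_2a_1=-a_1^2,\qquad
\alpha_1\beta_0+\alpha_0\beta_1=2a_1a_0=-a_1^2,\qquad
\alpha_2\beta_0+\alpha_0\beta_2=2a_2a_0=\tfrac12 a_1^2 .
\]
For instance, $\alpha_2\beta_1+\alpha_1\beta_2=\tfrac14\theta^2(1+\theta)-\tfrac14\theta(2+\theta-\theta^2)=-\tfrac12\theta(1-\theta^2)$, which is the first identity. The other two follow by the same short expansion.

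A useful sanity check is that $\sum_\ell\alpha_\ell=0$ and $\sum_\ell a_\ell=0$, so both sides vanish on constant sequences. The main obstacle is only bookkeeping: keeping the signs of $a_\ell$ consistent and using the correct cross-term factor ($2a_ia_j$ on the right versus $\alpha_i\beta_j+\alpha_j\beta_i$ on the left). No analytic input is required. The identity holds for any real inner product space, and $\theta\in(0,1)$ is needed only so that $a_1$ is real and $G(\theta)$ is positive definite.
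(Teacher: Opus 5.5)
Your proposal is correct and is essentially the paper's route. The paper itself only cites \cite{LPQT21_NMPDE,LPT21_AML} for this identity, but your direct expand-and-match verification is the same technique it uses in the appendix: your six coefficients ($\tfrac18(1+\theta)(2+\theta-\theta^2)$, $-\tfrac12\theta^3$, $\tfrac18(1-\theta)(\theta^2+\theta-2)$, $-a_1^2$, $\tfrac12a_1^2$, $-a_1^2$) are exactly the right-hand sides of \eqref{eq:H-stab-system} with $\nu\Delta t\lambda_1=0$, and the two cross-term identities you left unwritten also check out, namely $\alpha_2\beta_0+\alpha_0\beta_2=\tfrac14\theta(1-\theta^2)=\tfrac12a_1^2$ and $\alpha_1\beta_0+\alpha_0\beta_1=-\tfrac12\theta(1-\theta^2)=-a_1^2$.
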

		\begin{proof}
			See \cite{LPQT21_NMPDE,LPT21_AML}. 
		\end{proof}

		\begin{lemma}
			For any $n \geq 1$ and $\theta \in (0,1)$, the approximations of \Cref{sche:DLN-NSE} satisfy
			\begin{align}
				\label{eq:stab-eq1} 
				&\begin{Vmatrix}
						u_{n+1} \\
						u_{n}
				\end{Vmatrix}_{G(\theta)}^2
				- 
				\begin{Vmatrix}
					u_{n} \\
					u_{n-1}
				\end{Vmatrix}_{G(\theta)}^2
				+ \Big\| \sum_{\ell}^{2} a_{\ell} u_{n-1+\ell} \Big\|^{2}
				+ \frac{\nu \Delta t \lambda_1}{2} \| u_{n,\beta} \|^2 
				\leq \frac{\Delta t \| f \|_{\infty}^2}{2 \nu \lambda_1}.  
			\end{align}
		\end{lemma}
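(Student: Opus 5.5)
We test the dual form \eqref{eq:DLN-NSE-dual} of \Cref{sche:DLN-NSE} against $u_{n,\beta} \in V$ and multiply through by $\Delta t$. Since $\nabla \cdot u_{n,\beta} = 0$ and $u_{n,\beta}$ vanishes on $\partial\Omega$, the pressure term drops out. The skew-symmetry \eqref{eq:b-skew-symm} gives $b(u_{n,\beta},u_{n,\beta},u_{n,\beta}) = 0$, so the nonlinear term vanishes as well. We are left with
\begin{align*}
\Big( \sum_{\ell=0}^2 \alpha_{\ell} u_{n-1+\ell}, u_{n,\beta} \Big) + \nu \Delta t \| \nabla u_{n,\beta} \|^2 = \Delta t \big( f(t_{n,\beta}), u_{n,\beta} \big).
\end{align*}
The first term is rewritten exactly by the $G$-stability identity \eqref{eq:G-stab-eq} with $y_n = u_n$. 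This produces the difference of $G(\theta)$-norms together with the numerical dissipation $\| \sum_{\ell} a_{\ell} u_{n-1+\ell} \|^2$.

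On the right-hand side we apply Cauchy--Schwarz and then Young's inequality in the form
\begin{align*}
\Delta t \big( f(t_{n,\beta}), u_{n,\beta} \big) \leq \Delta t \| f \|_{\infty} \| u_{n,\beta} \| \leq \frac{\nu \lambda_1 \Delta t}{2} \| u_{n,\beta} \|^2 + \frac{\Delta t}{2 \nu \lambda_1} \| f \|_{\infty}^2 .
\end{align*}
Here we used $\| f(t_{n,\beta}) \| \leq \| f \|_{\infty}$. This is legitimate because $t_{n,\beta} = \sum_{\ell} \beta_{\ell} t_{n-1+\ell} \geq 0$: the $\beta_\ell$ are nonnegative for $\theta\in(0,1)$.

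On the left-hand side we use the Poincar\'e--Friedrichs inequality \eqref{eq:PF-ineq} in the form $\nu \Delta t \| \nabla u_{n,\beta} \|^2 \geq \nu \lambda_1 \Delta t \| u_{n,\beta} \|^2$. Subtracting the Young term $\frac{\nu \lambda_1 \Delta t}{2} \| u_{n,\beta} \|^2$ leaves exactly $\frac{\nu \Delta t \lambda_1}{2} \| u_{n,\beta} \|^2$ on the left. Together with the terms coming from \eqref{eq:G-stab-eq}, this is precisely \eqref{eq:stab-eq1}.

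There is no genuine obstacle here; the only delicate points are bookkeeping. First, the Young constant must be chosen as $\nu\lambda_1$ so that exactly half of the Poincar\'e-lower-bounded viscous term is absorbed. Second, $f$ is only essentially bounded, so $f(t_{n,\beta})$ must be interpreted as a value in $H$ with $\| f(t_{n,\beta}) \| \leq \| f \|_\infty$; I would state this as a standing assumption, for example $f$ continuous in time or evaluated at a representative.
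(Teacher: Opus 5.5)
Your proof is correct and follows the paper's argument. You test \eqref{eq:DLN-NSE-dual} with $\Delta t\, u_{n,\beta}$, invoke \eqref{eq:G-stab-eq} and \eqref{eq:b-skew-symm}, and then use Cauchy--Schwarz, Young and Poincar\'e--Friedrichs. The only difference is the order of the last steps. The paper applies Poincar\'e on the right-hand side and absorbs into half of the viscous term, which first gives the gradient estimate \eqref{eq:GstabNSE1}; that estimate is reused later for \eqref{eq:stabilityL2-3}, and a second Poincar\'e step then yields \eqref{eq:stab-eq1}. Your ordering reaches \eqref{eq:stab-eq1} directly but does not record that intermediate bound.
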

		\begin{proof}
			We test \eqref{eq:DLN-NSE-dual} with $\Delta t u_{n,\beta}$, use 
			$G$-stability identity in \eqref{eq:G-stab-eq} and the skew-symmetric property of $b$ in \eqref{eq:b-skew-symm} to achieve
			\begin{align*}
				&\begin{Vmatrix}
						u_{n+1} \\
						u_{n}
				\end{Vmatrix}_{G(\theta)}^2
				- 
				\begin{Vmatrix}
					u_{n} \\
					u_{n-1}
				\end{Vmatrix}_{G(\theta)}^2
				+ \Big\| \sum_{\ell}^{2} a_{\ell} u_{n-1+\ell} \Big\|^{2}
				+ \nu \Delta t \| \nabla u_{n,\beta} \|^2 
				\leq \Delta t \big( f(t_{n,\beta}), u_{n,\beta} \big).
			\end{align*}
			By Cauchy-Schwarz inequality, Young's inequality and Poincar\'e-Friedrichs inequality in \eqref{eq:PF-ineq}
\begin{confidential}
	\color{darkblue}
	\begin{align*}
		&\begin{Vmatrix}
			u_{n+1} \\
			u_{n}
		\end{Vmatrix}_{G(\theta)}^2
		- 
		\begin{Vmatrix}
			u_{n} \\
			u_{n-1}
		\end{Vmatrix}_{G(\theta)}^2
		+ \Big\| \sum_{\ell}^{2} a_{\ell} u_{n-1+\ell} \Big\|^{2}
		+ \nu \Delta t \| \nabla u_{n,\beta} \|^2 \\
		&\leq \Delta t \| f(t_{n,\beta}) \| \| u_{n,\beta} \| 
		\leq \frac{\Delta t}{\sqrt{\nu \lambda_1}} \| f(t_{n,\beta}) \| \sqrt{\nu} \| \nabla u_{n,\beta} \|
		\leq 
		\frac{\Delta t}{2 \nu \lambda_1} \| f(t_{n,\beta}) \|^2 + \frac{\Delta t \nu}{2} 
		\| \nabla u_{n,\beta} \|^2 
	\end{align*}
	\normalcolor
\end{confidential}
			\begin{align}
				\label{eq:GstabNSE1}
				&\begin{Vmatrix}
					u_{n+1} \\
					u_{n}
				\end{Vmatrix}_{G(\theta)}^2
				- 
				\begin{Vmatrix}
					u_{n} \\
					u_{n-1}
				\end{Vmatrix}_{G(\theta)}^2
				+ \Big\| \sum_{\ell}^{2} a_{\ell} u_{n-1+\ell} \Big\|^{2}
				+ \frac{\nu \Delta t}{2} \| \nabla u_{n,\beta} \|^2 
				\leq \frac{\Delta t}{2 \nu \lambda_1} \| f(t_{n,\beta}) \|^2. 
			\end{align}
			We apply Poincar\'e-Friedrichs inequality in \eqref{eq:PF-ineq} to the above inequality and achieve \eqref{eq:stab-eq1}. 
		\end{proof}
		
		\begin{lemma} \label{lem:H-stab}
			If the time step $\Delta t$ satisfies 
			\begin{align}
				\label{eq:dt-limit-1}
				\Delta t < \frac{1}{\nu \lambda_1} \min \Big\{ 
				\frac{8 \theta (1 - \theta^2)}{( 8 - 6 \theta^2 + 3 \theta^4)}, 2(1-\theta) \Big\}:= C_{\Delta t},
			\end{align}
			there exist $\epsilon>0$, $a, b, c \in \mathbb{R}$, and the positive definite matrix 
			\begin{align}  
				\label{eq:Hmatrix}
				H(\theta) = \left(
				\begin{array}{cc}
					h_{11}(\theta) \mathbb{I}_d & 0 \vspace{.2cm} \\
					0 & h_{22}(\theta) \mathbb{I}_d%
				\end{array}
				\right), 			
			\end{align} 
			such that the left hand side of \eqref{eq:stab-eq1} can be written as 
			\begin{align}
				\label{eq:H-stab}
				& 
				\begin{Vmatrix}
					{u_{n+1}} \\
					{u_{n}}%
				\end{Vmatrix}%
				_{G(\theta )}^{2} 
				- 
				\begin{Vmatrix}
					{u_{n}} \\
					{u_{n-1}}%
				\end{Vmatrix}%
				_{G(\theta )}^{2}
				+ \Big\|\sum_{\ell =0}^{2} a_{\ell} u_{n-1+\ell} \Big\|^{2}
				+ \frac{\nu \Delta t \lambda_1}{2} \| u_{n,\beta} \|^2 \\
				& 
				= (1+\epsilon)  
				\begin{Vmatrix}
					{u_{n+1}} \\
					{u_{n}}%
				\end{Vmatrix}%
				_{H(\theta )}^{2}
				- 
				\begin{Vmatrix}
					{u_{n}} \\
					{u_{n-1}}%
				\end{Vmatrix}%
				_{H(\theta )}^{2}
				+ \big\|  a u_{n+1} + b u_n + cu _{n-1}\big\|^2,
				\notag
			\end{align}
			where the $H$-norm associated with the $H(\theta)$ matrix in \eqref{eq:Hmatrix} is defined as 
			\begin{align}  \label{eq:H-norm}
				\begin{Vmatrix}
					u \\
					v\end{Vmatrix}_{H(\theta)}^{2}
				= 
				[u^\top \ v^\top] H(\theta)
				\begin{bmatrix}
					u \\  v
				\end{bmatrix}
				= h_{11}(\theta) \| u \|^{2}_{{\mathbb{R}}^{d}} 
				+ h_{22}(\theta) \| v \|^2_{{\mathbb{R}}^{d}},
				\ \forall u,v\in \mathbb{R}^{d}.
			\end{align}
			Here 
			\begin{align}
				\label{eq:h-lower-bound}
				&h_{11}(\theta) > \frac{\theta^3}{2} \Big[ 1 - \frac{1}{2} \theta (1 - \theta) \Big] > 0, \qquad
				h_{22}(\theta) > \frac{1}{16} \theta(1 - \theta)^2 (1+\theta) (2+\theta) >0,
			\end{align}
			and 
			\begin{align}
				&\max\{ h_{11}(\theta), h_{22}(\theta) \} < C_{h}(\theta), \qquad 
				\frac{1}{4} < \frac{1}{\epsilon} < \frac{C_{\epsilon}(\theta)}{\nu \lambda_1 \Delta t}, 
				\label{eq:1-epsi-cond}
			\end{align}
			for some functions $C_{h}(\theta)$, $C_{\epsilon}(\theta) >0$ only depending on $\theta$. 
		\end{lemma}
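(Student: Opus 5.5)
The plan is to reduce \eqref{eq:H-stab} to a matrix identity. Both sides are quadratic forms in the triple $(u_{n+1},u_n,u_{n-1})$ with scalar coefficients acting as multiples of the identity, so it suffices to match two real symmetric $3\times3$ matrices. Set $\kappa=\nu\lambda_1\Delta t/2$ and write $\beta=(\beta_2,\beta_1,\beta_0)^\top$, $\mathbf a=(a_2,a_1,a_0)^\top$, $g_1=\frac14(1+\theta)$, $g_2=\frac14(1-\theta)$.

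The left-hand side is the form of
\begin{align*}
M(\kappa)=\mathrm{diag}\big(g_1,\ g_2-g_1,\ -g_2\big)+\mathbf a\mathbf a^\top+\kappa\,\beta\beta^\top .
\end{align*}
The right-hand side is the form of
\begin{align*}
\mathrm{diag}\big((1+\epsilon)h_{11},\ (1+\epsilon)h_{22}-h_{11},\ -h_{22}\big)+\mathbf v\mathbf v^\top,\qquad \mathbf v=(a,b,c)^\top .
\end{align*}
This gives six scalar equations for the six unknowns $h_{11},h_{22},\epsilon,a,b,c$. At $\kappa=0$ the solution is $\epsilon=0$, $H=G$, $\mathbf v=\mathbf a$ by \eqref{eq:G-stab-eq}. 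I will construct the solution for $\kappa>0$ explicitly.

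\textbf{Off-diagonal entries.} The three off-diagonal equations involve only $\mathbf v$:
\begin{align*}
ab=M_{12}=a_2a_1+\kappa\beta_2\beta_1,\quad ac=M_{13}=a_2a_0+\kappa\beta_2\beta_0,\quad bc=M_{23}=a_1a_0+\kappa\beta_1\beta_0 .
\end{align*}
They give $a^2=M_{12}M_{13}/M_{23}$, $b^2=M_{12}M_{23}/M_{13}$ and $c^2=M_{13}M_{23}/M_{12}$. This requires $M_{12}M_{13}M_{23}>0$ and no vanishing denominators.

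Since $a_1<0$ and $a_0=a_2=-a_1/2>0$, at $\kappa=0$ we have $M_{12},M_{23}<0<M_{13}$. Of these, $M_{12}$ and $M_{23}$ are negative numbers pushed upward by the positive terms $\kappa\beta_2\beta_1$ and $\kappa\beta_1\beta_0$. Requiring them to stay negative gives, for $M_{23}$ (the binding one since $\beta_0<\beta_2$), $\kappa\beta_1\beta_0<a_1^2/2$, i.e. $\nu\lambda_1\Delta t<\theta(1-\theta^2)/(\beta_1\beta_0)$. I expect this, after simplification, to be the first entry of $C_{\Delta t}$ in \eqref{eq:dt-limit-1}.

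\textbf{Diagonal entries.} The $(3,3)$ entry gives directly
\begin{align*}
h_{22}=c^2+g_2-a_0^2-\kappa\beta_0^2 .
\end{align*}
The $(1,1)$ entry gives $X:=(1+\epsilon)h_{11}=M_{11}-a^2$. The $(2,2)$ entry gives $h_{11}=(1+\epsilon)h_{22}+b^2-M_{22}$. Eliminating $h_{11}$ leaves the scalar quadratic
\begin{align*}
h_{22}s^2+(b^2-M_{22})s-X=0,\qquad s=1+\epsilon .
\end{align*}
I will take its positive root and show by a continuity/perturbation argument in $\kappa$ that $s(0)=1$ and $s'(0)>0$. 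This yields $\epsilon>0$ and $\epsilon\ge c(\theta)\kappa$, which is the upper bound on $1/\epsilon$ in \eqref{eq:1-epsi-cond}. The bound $\epsilon<4$ will come from the smallness of $\kappa$.

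\textbf{Main obstacle.} The hardest step is the quantitative part. For every $\kappa$ in the allowed range (not only small $\kappa$) I must show $h_{11},h_{22}>0$ with the explicit lower bounds \eqref{eq:h-lower-bound} and uniform upper bounds $C_h(\theta)$. The formula for $h_{22}$ is a rational function of $\kappa$ that decreases as $\kappa$ grows. My first attempt will be to clear denominators and verify monotonicity in $\kappa$, so that the lower bounds follow by evaluating at the endpoint $\kappa=\frac12\nu\lambda_1 C_{\Delta t}$. I expect the second constraint $\nu\lambda_1\Delta t<2(1-\theta)$ to enter exactly here, to keep $g_2-\kappa\beta_0^2$ and hence $h_{22}$ bounded away from zero. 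Once positivity is established, the upper bounds are routine because all quantities are rational in $\kappa$ on a compact interval.
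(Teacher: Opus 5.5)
Your reduction to matching two symmetric $3\times3$ matrices is exactly the paper's system \eqref{eq:H-stab-system}. Solving the off-diagonal block directly via $a^2=M_{12}M_{13}/M_{23}$, $b^2=M_{12}M_{23}/M_{13}$, $c^2=M_{13}M_{23}/M_{12}$ is cleaner than the paper's detour through $x=(a+b+c)^2$ and nested quadratics. Your formulas for $h_{22}$ and for the quadratic in $s=1+\epsilon$ are also correct.

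Your sign analysis, however, is inverted. Since $M_{12}-M_{23}=\kappa\beta_1(\beta_2-\beta_0)=\tfrac12\kappa\theta\beta_1>0$, it is $M_{12}$ that reaches zero first. Keeping $M_{12}M_{13}M_{23}>0$ on an interval starting at $\kappa=0$ therefore requires $\nu\lambda_1\Delta t\,\beta_1\beta_2<a_1^2$, i.e.\ $\Delta t<4(1-\theta)/(\nu\lambda_1\theta(2-\theta))$; checking $M_{23}<0$ is not enough. This condition follows from the entry $2(1-\theta)$ of $C_{\Delta t}$. The other entry, $8\theta(1-\theta^2)/(8-6\theta^2+3\theta^4)$, is of order $\theta$ as $\theta\to0$, while both sign thresholds are of order $1/\theta$. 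So it has a different origin, and that origin is what your proposal is missing.

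\textbf{The gap: your treatment of $\epsilon$ is only local.} Knowing $s(0)=1$ and $s'(0)>0$ gives $\epsilon>0$ and $\epsilon\ge c\kappa$ only for $\kappa$ below some unquantified threshold. But \eqref{eq:1-epsi-cond} must hold for every $\Delta t<C_{\Delta t}$, and $\tfrac12\nu\lambda_1C_{\Delta t}$ is not a small parameter. For small $\theta$ it is about $\theta/2$, while $M_{13}$ starts at $a_1^2/4\approx\theta/8$ and grows at rate $\beta_2\beta_0\approx\tfrac14$, so the perturbation is of relative size $O(1)$. A continuity argument does not help either, since you would still have to rule out $\epsilon=0$ on the whole range.

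The missing idea is the global identity obtained by evaluating both quadratic forms at $u_{n+1}=u_n=u_{n-1}$ and using $\sum_\ell a_\ell=0$, $\sum_\ell\beta_\ell=1$:
\[
\epsilon\,(h_{11}+h_{22})+(a+b+c)^2=\kappa .
\]
Consequences:
\begin{itemize}
\item $\epsilon>0$ is equivalent to $(a+b+c)^2<\kappa$.
\item Given two-sided bounds on $h_{11}+h_{22}$, the bound $1/\epsilon<C_\epsilon(\theta)/(\nu\lambda_1\Delta t)$ amounts to $(a+b+c)^2\le(1-\delta)\kappa$ uniformly on the admissible range.
\item The paper proves $(a+b+c)^2<\kappa/2$, and this is exactly where the first entry of $C_{\Delta t}$ is used. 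It is not automatic: for $\theta=\tfrac12$ and $\nu\lambda_1\Delta t=0.9$ (allowed by the second entry but not the first) one gets $(a+b+c)^2\approx0.27>\kappa/2=0.225$.
\end{itemize}

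In your variables the needed estimate is concrete. With $p=M_{12}$, $q=M_{13}$, $r=M_{23}$,
\[
(a+b+c)^2=\frac{(pq+pr+qr)^2}{pqr},\qquad pq+pr+qr=\kappa\big[\kappa\beta_0\beta_1\beta_2-a_1^2\big(\beta_2\beta_0+\tfrac14\beta_1(\beta_2+\beta_0)\big)\big],
\]
so you must prove a polynomial inequality in $\kappa$ on the whole interval. The bound $\epsilon<4$ likewise has to come from the identity plus explicit lower bounds on $h_{11}+h_{22}$, not from smallness of $\kappa$. Note that $h_{11}+h_{22}>2h_{22}+b^2-M_{22}$ once $\epsilon>0$.

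\textbf{Two smaller points.}
\begin{itemize}
\item Taking ``the positive root'' needs $X=M_{11}-a^2>0$, which you never check. It follows from $a^2/c^2=M_{12}^2/M_{23}^2<1$, which gives $a^2<M_{13}<M_{11}$.
\item $h_{22}$ is not decreasing in $\kappa$. At $\theta=\tfrac12$ it rises from $0.125$ to about $0.15$ over the admissible range, because $c^2$ grows. So evaluating at the endpoint gives no lower bound. Instead drop $c^2\ge0$ and use $\nu\lambda_1\Delta t<2(1-\theta)$, as the paper does. Meanwhile $h_{11}=(1+\epsilon)h_{22}+b^2-M_{22}>b^2+\tfrac{\theta^3}{2}-\tfrac14\kappa\theta^4$ gives the $h_{11}$ bound at once.
\end{itemize}
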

		\begin{proof}
			See Appendix \ref{sec:appendix}.
		\end{proof}

        \begin{lemma}
            Given the sequence $\{ y_n\}_{n=1}^{\infty} \subset L^{2}(\Omega)$ and $\theta \in (0,1)$, the following two identities hold  
            \begin{align}
                &\Big(\sum_{\ell=0}^{2} \alpha_{\ell} y_{n-1+\ell}, y_{n+1} \Big) \label{eq:identity-1}\\
                &= 
                \begin{Vmatrix}
                    {y_{n+1}} \\
                    {y_{n}}%
                \end{Vmatrix}_{G(\theta )}^{2}
                - 
                \begin{Vmatrix}
                    {y_{n}} \\
                    {y_{n-1}}%
                \end{Vmatrix}_{G(\theta )}^{2}
                + \frac{\theta}{2}  \| y_{n+1} -  y_{n} \|^2
                + \frac{1}{4} ( 1 - \theta ) \| y_{n+1} -  y_{n-1} \|^2, \notag \\
                &\big( y_{n,\beta},  y_{n+1} \big) \label{eq:identity-2} \\
                &= (2\beta_2 -1) \| y_{n+1} \|^2 
                + \frac{\beta_1}{2} \| y_{n+1} + y_{n} \|^2
                + \frac{\beta_0}{2} \| y_{n+1} + y_{n-1} \|^2
                \notag \\ 
                &+
                \Big( \frac{\beta_0+\beta_1}{2} \| y_{n+1} \|^2
                + \frac{\beta_0}{2}  \| y_{n} \|^2 
                \Big)	
                - 
                \Big( \frac{\beta_0+\beta_1}{2} \| y_{n} \|^2
                + \frac{\beta_0}{2}  \| y_{n-1} \|^2 
                \Big). \notag
            \end{align}
        \end{lemma}
		\begin{proof}
            Just algebraic calculation. 
        \end{proof}

\begin{confidential}
	\color{darkblue}
    \begin{align*}
        \frac{1}{2} \big[ a^2 + b^2 - (a - b)^2 \big]
        = \frac{1}{2} \big[ a^2 + b^2 - (a^2 + b^2 - 2ab) \big]
        = \frac{1}{2} \big[ \cancel{a^2} \bcancel{+ b^2} \cancel{- a^2} \bcancel{- b^2} + 2ab \big] = ab. 
    \end{align*}
	\begin{align*}
		& 
		\big({\alpha _{2}}{y_{n+1}}+{\alpha _{1}}{y_{n}}+{\alpha _{0}}{y_{n-1}}  , y_{n+1} \big)
		= \alpha _{2} \|y_{n+1}\|^2 + \alpha _{1} ( {y_{n}} , y_{n+1} )  + \alpha _{0} ( y_{n-1} , y_{n+1} ) \\
		& 
		= \frac{1}{2}(\theta+1) \|y_{n+1}\|^2 
		- \theta ( {y_{n}} , y_{n+1} )  
		+ \frac{1}{2} (\theta-1) ( y_{n-1} , y_{n+1} ) \\
		& 
		= \frac{1}{2}(\theta+1) \|y_{n+1}\|^2 
		- \frac{\theta}{2} \big( \|y_{n+1}\|^2 + \| y_{n} \|^2 - \| y_{n+1} -  y_{n} \|^2 \big)  \\
		& 
		\qquad
		+ \frac{1}{2} (\theta-1) \Big( \frac{1}{2} \|y_{n+1}\|^2 + \frac{1}{2} \| y_{n-1} \|^2  -  \frac{1}{2} \| y_{n+1} - y_{n-1} \|^2\big) \\
		& 
		= \Big[ \frac{1}{2}(\cancel{\theta}+1) - \cancel{\frac{\theta}{2}} + \frac{1}{4} (\theta-1) \Big] \|y_{n+1}\|^2 
		- \frac{\theta}{2} \| y_{n} \|^2 
		- \frac{1}{4} (1 - \theta) \| y_{n-1} \|^2 \\
		& 
		\qquad
		+ \frac{\theta}{2}  \| y_{n+1} -  y_{n} \|^2
		+ \frac{1}{4} ( 1 - \theta ) \| y_{n+1} -  y_{n-1} \|^2 \\
		& 
		= \frac{1+\theta}{4}  \|y_{n+1}\|^2 
		- \frac{\theta}{2} \| y_{n} \|^2 
		- \frac{1}{4} (1 - \theta) \| y_{n-1} \|^2 \\
		& 
		\qquad
		+ \frac{\theta}{2}  \| y_{n+1} -  y_{n} \|^2
		+ \frac{1}{4} ( 1 - \theta ) \| y_{n+1} -  y_{n-1} \|^2 \\
		& 
		= \Big( \frac{1+\theta}{4}  \|y_{n+1}\|^2  + \frac{1-\theta}{4} \| y_{n} \|^2  \Big)
		- \Big( \frac{1+\theta}{4} \| y_{n} \|^2  +  \frac{1-\theta}{4} \| y_{n-1} \|^2 \Big) \\
		& 
		\qquad
		+ \frac{\theta}{2}  \| y_{n+1} -  y_{n} \|^2
		+ \frac{1}{4} ( 1 - \theta ) \| y_{n+1} -  y_{n-1} \|^2 \\
		& 
		= 
		\begin{Vmatrix}
			{y_{n+1}} \\
			{y_{n}}%
		\end{Vmatrix}_{G(\theta )}^{2}
		- 
		\begin{Vmatrix}
			{y_{n}} \\
			{y_{n-1}}%
		\end{Vmatrix}_{G(\theta )}^{2}
		+ \frac{\theta}{2}  \| y_{n+1} -  y_{n} \|^2
		+ \frac{1}{4} ( 1 - \theta ) \| y_{n+1} -  y_{n-1} \|^2,
	\end{align*}
	\normalcolor
\end{confidential}
\begin{confidential}
	\color{darkblue}  
	\begin{align*}
		( y_{n,\beta }, y_{n+1} )
		&= \beta_2 \| y_{n+1} \|^2 + \beta_1 ( y_{n+1}, y_{n}) 
		+ \beta_0 ( y_{n+1}, y_{n-1}) \\
		&= \beta_2 \| y_{n+1} \|^2 
		+ \beta_1 \frac{1}{2} \Big( \| y_{n+1} + y_{n} \|^2 - \| y_{n+1} \|^2 - \| y_{n} \|^2 \Big) \\
		&\hspace{2.3cm}
		+ \beta_0 \frac{1}{2} \Big( \| y_{n+1} + y_{n-1} \|^2 - \| y_{n+1} \|^2 - \| y_{n-1} \|^2 \Big) \\
		&= \beta_2 \| y_{n+1} \|^2
		+ \frac{\beta_1}{2} \| y_{n+1} + y_{n} \|^2
		- \frac{\beta_1}{2} \| y_{n+1} \|^2 
		- \frac{\beta_1}{2} \|y_{n} \|^2 \\
		&\hspace{2.3cm} 
		+ \frac{\beta_0}{2} \| y_{n+1} + y_{n-1} \|^2
		- \frac{\beta_0}{2} \| y_{n+1} \|^2
		- \frac{\beta_0}{2} \| y_{n-1} \|^2 \\
		&= \Big( \beta_2 - \frac{\beta_1}{2} - \frac{\beta_0}{2}  \Big)
		\| y_{n+1} \|^2
		- \frac{\beta_1}{2}  \| y_{n} \|^2
		- \frac{\beta_0}{2}  \| y_{n-1} \|^2
		\\
		& 
		\hspace{2.3cm}
		+ \frac{\beta_1}{2} \| y_{n+1} + y_{n} \|^2
		+ \frac{\beta_0}{2} \| y_{n+1} + y_{n-1} \|^2
		\\
		& 
		= \Big[ \beta_2 - \frac{1}{2} (\beta_1 + \beta_0) \Big] \| y_{n+1} \|^2
		- \frac{\beta_1}{2}  \|y_{n} \|^2
		- \frac{\beta_0}{2}  \|y_{n-1} \|^2
		\\
		& 
		\hspace{2.3cm}
		+ \frac{\beta_1}{2} \| y_{n+1} + y_{n} \|^2
		+ \frac{\beta_0}{2} \| y_{n+1} + y_{n-1} \|^2
		\\
		& 
		= \big[ \beta_2 - (\beta_1 + \beta_0) \big] \| y_{n+1} \|^2 \\
		&\quad +
		\Big( \frac{\beta_0+\beta_1}{2} \| y_{n+1} \|^2
		+ \frac{\beta_0}{2}  \| y_{n} \|^2 \Big)	
		- 
		\Big( \frac{\beta_0+\beta_1}{2} \| y_{n} \|^2
		+ \frac{\beta_0}{2}  \|y_{n-1} \|^2 \Big)	
		\\
		& 
		\hspace{2.3cm}
		+ \frac{\beta_1}{2} \| y_{n+1} + y_{n} \|^2
		+ \frac{\beta_0}{2} \| y_{n+1} + y_{n-1} \|^2 \\
		& 
		= \big[ \beta_2 - (1 - \beta_2) \big] \| y_{n+1} \|^2 \\
		&\quad +
		\Big( \frac{\beta_0+\beta_1}{2} \| y_{n+1} \|^2
		+ \frac{\beta_0}{2}  \| y_{n} \|^2 \Big)	
		- 
		\Big( \frac{\beta_0+\beta_1}{2} \| y_{n} \|^2
		+ \frac{\beta_0}{2}  \| y_{n-1} \|^2 \Big)	
		\\
		& 
		\hspace{2.3cm}
		+ \frac{\beta_1}{2} \| y_{n+1} + y_{n} \|^2
		+ \frac{\beta_0}{2} \| y_{n+1} + y_{n-1} \|^2 \\
		&= \big( 2 \beta_2 - 1 \big) \| y_{n+1} \|^2 \\
		&\quad +
		\Big( \frac{\beta_0+\beta_1}{2} \| y_{n+1} \|^2
		+ \frac{\beta_0}{2} \| y_{n} \|^2 \Big)	
		- 
		\Big( \frac{\beta_0+\beta_1}{2} \| y_{n} \|^2
		+ \frac{\beta_0}{2}  \| y_{n-1} \|^2 \Big)	
		\\
		& 
		\hspace{2.3cm}
		+ \frac{\beta_1}{2} \| y_{n+1} + y_{n} \|^2
		+ \frac{\beta_0}{2} \| y_{n+1} + y_{n-1} \|^2
	\end{align*}
	\begin{align*}
		2 \beta_2 - 1 
		=& 2 \cdot \frac{1}{4} (2 + \theta - \theta^2) - 1 
		= \frac{1}{2} (2 + \theta - \theta^2) - 1 
		= \cancel{1} + \frac{1}{2} \theta - \frac{1}{2} \theta^2 \cancel{- 1} 
		= \frac{1}{2} \theta (1 - \theta) > 0.
	\end{align*}
	\normalcolor
\end{confidential}

	\section{Long-time stability in $L^2$-norm} \label{sec:stability-L2}
    We make the following notation 
	\begin{align}
        &K_1( \|u_0\|, \|u_0\|, \|f\|_{\infty}) 
		= C_{h}(\theta) \big( \|u_{1}\|^2 + \|u_0\|^2 \big) 
		+ \frac{C_{\epsilon}(\theta)}{2 \nu^2 \lambda_1^2} 
		\|f\|_{\infty}^2, \label{eq:K1} \\
        &K_2(\| u_0 \|, \|u_1\|, \| f \|_{\infty})
        =: \bigg\{ \frac{K_1}{\frac{\theta^3}{2} \Big[ 1 - \frac{1}{2} \theta (1 - \theta) \Big]} \bigg\}^{1/2}, \label{eq:K2} 
	\end{align}
    where $C_{h}(\theta)$ and $C_{\epsilon}(\theta)$ are positive constants in Lemma \ref{lem:H-stab}.
    
    \begin{theorem}
        \label{thm:stab-L2}
        If the time step $\Delta t$ satisfies \eqref{eq:dt-limit-1}, the approximate velocity of \Cref{sche:DLN-NSE} satisfies 
        \begin{align}
            \label{eq:L2bound0}
            & 
            \begin{Vmatrix}
                {u_{N}} \\
                {u_{N-1}}%
            \end{Vmatrix}%
            _{H(\theta )}^{2}
            \leq
            \frac{1}{(1+\epsilon)^{N-1}}
            \begin{Vmatrix}
                {u_{1}} \\
                {u_{0}}%
            \end{Vmatrix}%
            _{H(\theta )}^{2}
            + \frac{C_{\epsilon}(\theta)}{2 \nu^2 \lambda_1^2} 
            \|f\|^2_{\infty}, \quad N = 2, \cdots 
        \end{align}
        where $\| \cdot \|_{H(\theta)}$ is the $H$-norm defined in \eqref{eq:H-norm}. 
        From \eqref{eq:L2bound0}, we have 
		\begin{align}
			\| u_{N} \| \leq K_2, \quad N = 2, \cdots. 
			\label{eq:u-boundL2-K2} 
		\end{align}
		Moreover for any $N \geq 1$, and $i = 1, \cdots,N-1$ 
		\begin{align}
			& 
			\label{eq:stabilityL2-3}
			\nu \Delta t \sum_{n=i}^{N-1} 
			\| \nabla u_{n,\beta} \|^2
			\leq 2
			\begin{Vmatrix}
				{u_{i}} \\
				{u_{i-1}}
			\end{Vmatrix}
			_{G(\theta)}^{2}
			+ \frac{(N-i) \Delta t}{\nu \lambda_1} \|f\|^2_{\infty},
		\end{align}
		and 
		\begin{align}
			&\Delta t \frac{\nu(2\beta_2-1)}{8} \sum_{j=i}^{N-1} \| \nabla u_{j+1} \|^2 \label{eq:u_{n+1}L^2(H^1)} \\
			&+
			\begin{Vmatrix}
				{u_{N}} \\
				{u_{N-1}}
			\end{Vmatrix}_{G(\theta )}^{2}
			+ \Delta t \nu \Big[  \frac{1}{4} \| \nabla u_{N} \|^2
			+ \Big(  \frac{\beta_0}{2}  + \frac{(2\beta_2-1)}{8}  \Big)\|\nabla u_{N-1} \|^2 \Big] \notag \\
			&+ \sum_{j=i}^{N-1} \Big(  \frac{\theta}{2} \| u_{j+1} -  u_{j} \|^2
			+ \frac{1}{4} ( 1 - \theta ) \| u_{j+1} -  u_{j-1} \|^2 \Big) \notag \\
			&+ \nu \Delta t \sum_{j=i}^{N-1}\Big( \frac{\beta_1}{2} \|( \nabla u_{j+1} - \nabla u_{j} )\|^2
			+ \frac{\beta_0}{2} \|( \nabla u_{j+1} - \nabla u_{j-1} )\|^2 \Big) \notag \\
			\leq&
			\Big( 1 + \frac{ K_2^4(\beta_0^2 + \beta_1^2)^2 } {\nu^4 (2\beta_2-1)^3} \Big) 
			\begin{Vmatrix}
				{u_{i}} \\
				{u_{i-1}}
			\end{Vmatrix}_{G(\theta)}^{2} 
			+ \Delta t \nu \Big[  \frac{1}{4} \|\nabla u_{i} \|^2
			+ \Big( \frac{\beta_0}{2}  + \frac{(2\beta_2-1)}{8}  \Big)\|\nabla u_{i-1} \|^2 \Big] \notag \\
			&+ \frac{(N-i) \Delta t}{\nu(2 \beta_{2} - 1) \lambda_1}
			\Big( 2 + \frac{ K_2^4(\beta_0^2 + \beta_1^2)^2 } { 2 \nu^4 (2\beta_2-1)^2} \Big) \| f \|_{\infty}^2 \notag
		\end{align}
    \end{theorem}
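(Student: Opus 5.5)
The plan is to prove the three parts of Theorem~\ref{thm:stab-L2} in order, each feeding the next.

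\emph{First bound \eqref{eq:L2bound0}.} Combine \eqref{eq:stab-eq1} with the identity \eqref{eq:H-stab} of Lemma~\ref{lem:H-stab}. Dropping the nonnegative square $\|a u_{n+1}+bu_n+cu_{n-1}\|^2$ gives
\begin{align*}
(1+\epsilon)\begin{Vmatrix} u_{n+1}\\ u_n\end{Vmatrix}_{H(\theta)}^2 \le \begin{Vmatrix} u_{n}\\ u_{n-1}\end{Vmatrix}_{H(\theta)}^2 + \frac{\Delta t\|f\|_\infty^2}{2\nu\lambda_1}.
\end{align*}
Dividing by $1+\epsilon$ and iterating from $n=1$ to $N-1$ yields the factor $(1+\epsilon)^{-(N-1)}$ in front of the initial $H$-norm. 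The forcing terms sum to a geometric series bounded by $\frac{\Delta t\|f\|_\infty^2}{2\nu\lambda_1}\cdot\frac{1}{\epsilon}$. By \eqref{eq:1-epsi-cond}, $1/\epsilon< C_\epsilon(\theta)/(\nu\lambda_1\Delta t)$, which gives exactly $\frac{C_\epsilon}{2\nu^2\lambda_1^2}\|f\|_\infty^2$.

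\emph{Uniform bound \eqref{eq:u-boundL2-K2}.} Use $h_{11}\|u_N\|^2\le \|(u_N,u_{N-1})\|_H^2$ together with the lower bound \eqref{eq:h-lower-bound} on $h_{11}$. Bound $\|(u_1,u_0)\|_H^2\le C_h(\|u_1\|^2+\|u_0\|^2)$ and $(1+\epsilon)^{-(N-1)}\le1$. This gives $\|u_N\|^2\le K_1/(\theta^3[1-\theta(1-\theta)/2]/2)=K_2^2$.

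\emph{Bound \eqref{eq:stabilityL2-3}.} Sum \eqref{eq:GstabNSE1} from $n=i$ to $N-1$, drop the nonnegative $G$-norm at $N$ and the dissipation terms, and multiply by $2$.

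\emph{Bound \eqref{eq:u_{n+1}L^2(H^1)}.} This is the main part. Test \eqref{eq:DLN-NSE-dual} with $\Delta t\, u_{n+1}$ instead of $u_{n,\beta}$.
\begin{itemize}
\item The time-difference term is handled by \eqref{eq:identity-1}, which produces the $G$-difference and the two increment terms $\frac{\theta}{2}\|u_{n+1}-u_n\|^2+\frac{1-\theta}{4}\|u_{n+1}-u_{n-1}\|^2$.
\item The viscous term $\nu\Delta t(\nabla u_{n,\beta},\nabla u_{n+1})$ is handled by \eqref{eq:identity-2} applied to $\nabla u$. This gives $(2\beta_2-1)\nu\Delta t\|\nabla u_{n+1}\|^2$ (positive, since $2\beta_2-1=\theta(1-\theta)/2$), a telescoping pair, and the $\beta_1,\beta_0$ squares. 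Writing these with differences as in the statement requires a rearrangement of the polarization: use $(a,b)=\frac12(\|a\|^2+\|b\|^2-\|a-b\|^2)$ instead of sums, shifting the coefficients in the telescoping bracket. The target coefficients $\frac14$ and $\frac{\beta_0}{2}+\frac{2\beta_2-1}{8}$ suggest part of the $(2\beta_2-1)\|\nabla u_{n+1}\|^2$ is moved into the telescoping energy, and I would fix them by matching.
\item The forcing term is bounded by Cauchy--Schwarz, Poincar\'e \eqref{eq:PF-ineq} and Young, absorbing $\frac{\nu(2\beta_2-1)}{8}\Delta t\|\nabla u_{n+1}\|^2$ and leaving $\frac{2\Delta t}{\nu(2\beta_2-1)\lambda_1}\|f\|^2$.
\end{itemize}

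\emph{Main obstacle: the nonlinear term.} It no longer vanishes, but $b(u_{n,\beta},u_{n,\beta},u_{n+1})=b(u_{n,\beta},u_{n,\beta},\beta_1u_n+\beta_0u_{n-1})$ up to the skew part, since $b(w,v,v)=0$ kills the $\beta_2 u_{n+1}$ component. Write $u_{n+1}=(u_{n,\beta}-\beta_1u_n-\beta_0u_{n-1})/\beta_2$, or equivalently use skew-symmetry, to reduce it to $b(u_{n,\beta},u_{n+1},\beta_1u_n+\beta_0u_{n-1})$ type terms. Bound these by \eqref{eq:b-bound-2}, the uniform $L^2$ bound $K_2$, and Young's inequality with exponents $4/3$ and $4$: absorb $\frac{\nu(2\beta_2-1)}{8}\|\nabla u_{n+1}\|^2$ and leave a term $\frac{K_2^4(\beta_0^2+\beta_1^2)^2}{\nu^3(2\beta_2-1)^3}\|\nabla u_{n,\beta}\|^2$-like quantity, using $\|\beta_1u_n+\beta_0u_{n-1}\|^2\le(\beta_0^2+\beta_1^2)(\|u_n\|^2+\|u_{n-1}\|^2)$. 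After summing over $j=i,\dots,N-1$, this leftover $\nu\Delta t\sum\|\nabla u_{j,\beta}\|^2$ is controlled by \eqref{eq:stabilityL2-3}. That step produces the factor $1+\frac{K_2^4(\beta_0^2+\beta_1^2)^2}{\nu^4(2\beta_2-1)^3}$ on the initial $G$-norm and the extra $f$-contribution; matching the exact constants is bookkeeping.
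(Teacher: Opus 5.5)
Your first three parts match the paper's argument: the $H$-norm recursion with the geometric sum and $1/\epsilon<C_\epsilon(\theta)/(\nu\lambda_1\Delta t)$, the $K_2$ bound through $h_{11}$, and summing \eqref{eq:GstabNSE1}. The genuine problem is your plan to get the terms $\frac{\beta_1}{2}\|\nabla u_{j+1}-\nabla u_j\|^2$ and $\frac{\beta_0}{2}\|\nabla u_{j+1}-\nabla u_{j-1}\|^2$ by switching to the polarization $(a,b)=\frac12(\|a\|^2+\|b\|^2-\|a-b\|^2)$. That polarization gives
\begin{align*}
(\nabla u_{n,\beta},\nabla u_{n+1})
={}&\tfrac{1+\beta_2}{2}\|\nabla u_{n+1}\|^2+\tfrac{\beta_1}{2}\|\nabla u_n\|^2+\tfrac{\beta_0}{2}\|\nabla u_{n-1}\|^2\\
&-\tfrac{\beta_1}{2}\|\nabla u_{n+1}-\nabla u_n\|^2-\tfrac{\beta_0}{2}\|\nabla u_{n+1}-\nabla u_{n-1}\|^2,
\end{align*}
so the difference terms enter with the wrong sign, and no adjustment of the telescoping weights repairs this. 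In fact the inequality with $+$ difference terms is false. Take $\theta=\frac12$ (so $\beta_2=\frac{9}{16}$, $\beta_1=\frac18$, $\beta_0=\frac{5}{16}$), $f=0$, $\phi$ an $L^2$-normalized first Stokes eigenfunction, and $u_0=s\phi$, $u_1=-s\phi$, $u_2=-\frac{s}{3}\phi$. Then $\sum_\ell\alpha_\ell u_\ell=0$ and $u_{1,\beta}=0$, so $u_2$ solves the scheme exactly. For $i=1$, $N=2$, the left side minus the right side of the last estimate equals $s^2\big(\frac{49}{576}\nu\lambda_1\Delta t-\frac12\Lambda\big)$, with $\Lambda=\frac{K_2^4(\beta_0^2+\beta_1^2)^2}{\nu^4(2\beta_2-1)^3}=O(s^4)$ (since $K_2^2=2\widehat{C}_h(\theta)s^2$ when $f=0$). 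This is positive for small $s$. What the argument actually yields is the same estimate with $\|\nabla u_{j+1}+\nabla u_j\|^2$ and $\|\nabla u_{j+1}+\nabla u_{j-1}\|^2$. That is also what the paper derives from \eqref{eq:identity-2} in \eqref{eq:test-un+1-eq2} and the display after it. The minus signs in the statement, and in the paper's final summed display, are a typo. In the example the sum version holds, with $\frac{220}{576}$ against $\frac{243}{576}$ in the $\nu\lambda_1\Delta t\,s^2$ coefficients. So keep the sums that \eqref{eq:identity-2} gives you.

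Two smaller points on the nonlinear term. First, your opening identity puts $\beta_1u_n+\beta_0u_{n-1}$ in the wrong slot. It is the \emph{middle} argument that should be expanded, giving $b(u_{n,\beta},u_{n,\beta},u_{n+1})=b(u_{n,\beta},\beta_1u_n+\beta_0u_{n-1},u_{n+1})$, because $b(u_{n,\beta},u_{n+1},u_{n+1})=0$. This is the form the paper bounds, with $\|\nabla u_n\|,\|\nabla u_{n-1}\|$ to the first power.

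Second, your skew-symmetric variant with $u_{n+1}$ differentiated also works, but it leaves a factor $\|\nabla(\beta_1u_n+\beta_0u_{n-1})\|^{1/2}$ that your plan does not place. It cannot be absorbed into $\|\nabla u_{n+1}\|^2$ alone unless you write $\beta_1u_n+\beta_0u_{n-1}=u_{n,\beta}-\beta_2u_{n+1}$. Otherwise you must absorb terms of the form $(\beta_0^2+\beta_1^2)(\|\nabla u_n\|^2+\|\nabla u_{n-1}\|^2)$ after an index shift, as the paper does with weight $\frac{\nu(2\beta_2-1)}{8}$. That shift, merged with the telescoping pair of \eqref{eq:identity-2} via $\frac{\beta_0+\beta_1}{2}+\frac{2\beta_2-1}{4}=\frac14$, is where the weights $\frac14$ and $\frac{\beta_0}{2}+\frac{2\beta_2-1}{8}$ come from, not a change of polarization. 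With that budget, optimized Young weights in your arrangement give a coefficient on $\|\nabla u_{n,\beta}\|^2$ no larger than the paper's $\frac{K_2^4(\beta_0^2+\beta_1^2)^2}{2\nu^3(2\beta_2-1)^3}$. So the stated constants do follow once the sign issue is fixed.
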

    \begin{proof}
        By \eqref{eq:stab-eq1} and \eqref{eq:H-stab}, we drop the numerical dissipation term and use induction to obtain 
\begin{confidential}
    \color{darkblue}
    \begin{align*}
			(1+\varepsilon)  
			\begin{Vmatrix}
				{u_{n+1}} \\
				{u_{n}}%
			\end{Vmatrix}%
			_{H(\theta )}^{2}
			\leq 
			\begin{Vmatrix}
				{u_{n}} \\
				{u_{n-1}}%
			\end{Vmatrix}%
			_{H(\theta )}^{2} + 
			\frac{\Delta t}{2\nu \lambda_1} \|f(t_{n,\beta })\|^2
		\end{align*}
        \begin{align*}
            \begin{Vmatrix}
				{u_{N}} \\
				{u_{N-1}}
			\end{Vmatrix}
			_{H(\theta )}^{2} 
			\leq& \frac{1}{1 + \varepsilon} 
			\begin{Vmatrix}
				{u_{N-1}} \\
				{u_{N-2}}
			\end{Vmatrix}_{H(\theta )}^{2}  
			+ \Big( \frac{1}{1 + \varepsilon} \Big) \frac{\Delta t}{2\nu \lambda_1} \|f\|^2_{\infty} \\
			\leq& \frac{1}{1 + \varepsilon} \Big[ \frac{1}{1 + \varepsilon} 
			\begin{Vmatrix}
				{u_{N-2}} \\
				{u_{N-3}}
			\end{Vmatrix}_{H(\theta )}^{2}
			+ \Big( \frac{1}{1 + \varepsilon} \Big) \frac{\Delta t}{2\nu \lambda_1} \|f\|^2_{\infty} \Big] 
			+ \Big( \frac{1}{1 + \varepsilon} \Big) \frac{\Delta t}{2\nu \lambda_1} \|f\|^2_{\infty} \\
        \end{align*}
    \normalcolor
\end{confidential}
		\begin{align*}
			\begin{Vmatrix}
				{u_{N}} \\
				{u_{N-1}}
			\end{Vmatrix}_{H(\theta)}^{2}  
			\leq& \frac{1}{1 + \varepsilon} 
			\begin{Vmatrix}
				{u_{N-1}} \\
				{u_{N-2}}
			\end{Vmatrix}_{H(\theta )}^{2}
			+ \Big( \frac{1}{1 + \varepsilon} \Big) \frac{\Delta t}{2\nu \lambda_1} \|f\|^2_{\infty} \\
			=& \frac{1}{(1 + \varepsilon)^2}  
			\begin{Vmatrix}
				{u_{N-2}} \\
				{u_{N-3}}
			\end{Vmatrix}_{H(\theta)}^{2}, 
			+ \Big[ \frac{1}{(1 + \varepsilon)^2} + \frac{1}{1 + \varepsilon} \Big] \frac{\Delta t}{2\nu \lambda_1} \|f\|^2_{\infty} \\
			\leq& \cdots \\
			\leq& \frac{1}{(1 + \varepsilon)^{N-1}}  
			\begin{Vmatrix}
				{u_{1}} \\
				{u_{0}}
			\end{Vmatrix}_{H(\theta)}^{2} \!\!
			+ \!\Big[ \frac{1}{(1 + \varepsilon)^{N-1}}+ \cdots + \frac{1}{(1 + \varepsilon)^2} + \frac{1}{1 + \varepsilon} \Big] \frac{\Delta t}{2\nu \lambda_1} \|f\|^2_{\infty} \\
			\leq& \frac{1}{(1 + \varepsilon)^{N-1}}  
			\begin{Vmatrix}
				{u_{1}} \\
				{u_{0}}
			\end{Vmatrix}_{H(\theta)}^{2}
			+ \frac{1}{\varepsilon} \frac{\Delta t}{2\nu \lambda_1} \|f\|^2_{\infty}, \quad \forall N = 2, \cdots.  
		\end{align*}
        which implies \eqref{eq:L2bound0} by \eqref{eq:1-epsi-cond}. 
		We combine \eqref{eq:h-lower-bound} and \eqref{eq:L2bound0} to achieve
        \begin{align*}
            \frac{\theta^3}{2} \Big[ 1 - \frac{1}{2} \theta (1 - \theta) \Big]
            \| u_{N} \|^2  
            \leq 
			\begin{Vmatrix}
				{u_{N}} \\
				{u_{N-1}}
			\end{Vmatrix}_{H(\theta)}^{2} \leq K_1,
        \end{align*}
		where $K_1$ is defined in \eqref{eq:K1}.
        Thus by the notation in \eqref{eq:K2}, we have $\| u_N \|^2 \leq K_2$ for all $N \geq 2$. 
		Adding up \eqref{eq:GstabNSE1} over $n$ from $i$ to $N-1$, and dropping the numerical dissipation term, 
		\begin{align*}
			\frac{\nu \Delta t }{2}
			\sum_{n=i}^{N-1}
			\| \nabla u_{n,\beta} \|^2
			\leq& 
			\begin{Vmatrix}
				{u_{i}} \\
				{u_{i-1}}%
			\end{Vmatrix}%
			_{G(\theta)}^{2}
			+ \frac{\Delta t}{2\nu \lambda_1} \sum_{n=i}^{N-1} \|f(t_{n,\beta})\|^2 \\
			\leq& 
			\begin{Vmatrix}
				{u_{i}} \\
				{u_{i-1}}%
			\end{Vmatrix}%
			_{G(\theta )}^{2}
			+ \frac{(N-i) \Delta t}{2\nu \lambda_1} \|f\|^2_{\infty},
			\notag \\
		\end{align*}
		which yields \eqref{eq:stabilityL2-3}. 
		For the proof of \eqref{eq:u_{n+1}L^2(H^1)}, 
		we apply the dual operator in \eqref{eq:DLN-NSE-dual} to $u_{n+1}$ 
		\begin{align}
			\label{eq:test with un+1}
			& 
			\Big(\sum_{\ell=0}^{2} \alpha_{\ell} u_{n-1+\ell}, u_{n+1} \Big)
			+ \nu \Delta t ( \nabla u_{n,\beta }, \nabla u_{n+1})
			+ \Delta t b ( u_{n,\beta } \cdot \nabla u_{n,\beta }, u_{n+1}) \\
			& = (f(t_{n,\beta }),u_{n+1}).
			\notag
		\end{align}
		For the non-linear term in \eqref{eq:test with un+1}, by the skew-symmetric property of $b$ in \eqref{eq:b-skew-symm}, H\"older's inequality, Ladyzhenskaya inequallity in \eqref{eq:Lady-ineq}, uniform bound of $u$ in \eqref{eq:u-boundL2-K2}, and Young's inequality 
		\begin{align}
			&b( u_{n,\beta } \cdot \nabla u_{n,\beta } , u_{n+1}) 
			\label{eq:u_nbeta-dot-nabla-unbeta} \\
			=& \beta_{1} (u_{n,\beta} \cdot \nabla u_{n}, u_{n+1} )
			+ \beta_{0} b(u_{n,\beta} \cdot \nabla u_{n-1}, u_{n+1} ) \notag \\
			\leq& 
			\beta_1 2^{-1/2} \| u_{n,\beta }\|^{\frac{1}{2}} \| \nabla u_{n,\beta }\|^{\frac{1}{2}} \| \nabla  u_{n}\|  \| u_{n+1} \|^{\frac{1}{2}}  \| \nabla u_{n+1} \|^{\frac{1}{2}} \notag \\
			& 
			+ \beta_0 2^{-1/2} \| u_{n,\beta }\|^{\frac{1}{2}} \| \nabla u_{n,\beta }\|^{\frac{1}{2}} \| \nabla  u_{n-1}\|  \| u_{n+1} \|^{\frac{1}{2}}  \| \nabla u_{n+1} \|^{\frac{1}{2}} \notag \\ 
			\leq& 
			\beta_1 2^{-1/2} \Big( \sum_{\ell=0}^{2} \beta_{\ell} \| u_{n-1+\ell} \| \Big)^{\frac{1}{2}} \| \nabla u_{n,\beta }\|^{\frac{1}{2}} \| \nabla  u_{n}\| K_2^{1/2} \| \nabla u_{n+1} \|^{\frac{1}{2}} \notag \\
			&
			+ \beta_0 2^{-1/2} \Big( \sum_{\ell=0}^{2} \beta_{\ell} \| u_{n-1+\ell} \| \Big)^{\frac{1}{2}} \| \nabla u_{n,\beta }\|^{\frac{1}{2}}
			\| \nabla  u_{n-1}\| K_2^{1/2} \| \nabla u_{n+1} \|^{\frac{1}{2}} \notag \\
			\leq& \beta_1 2^{-1/2} K_2 \| \nabla u_{n,\beta }\|^{\frac{1}{2}} \| \nabla  u_{n}\| \| \nabla u_{n+1} \|^{\frac{1}{2}} 
			+ \beta_0 2^{-1/2} K_2 \| \nabla u_{n,\beta }\|^{\frac{1}{2}}
			\| \nabla  u_{n-1} \| \| \nabla u_{n+1} \|^{\frac{1}{2}} \notag \\
			\leq& 
			\frac{1}{2}\frac{\nu(2\beta_2-1)}{4}  \| \nabla  u_{n}\|^2
			+ \frac{1}{2} \frac{4}{\nu(2\beta_2-1)} \Big( \beta_1 2^{-1/2} K_2
			\| \nabla u_{n,\beta }\|^{\frac{1}{2}}  \| \nabla u_{n+1} \|^{\frac{1}{2}} \Big)^2 \notag \\
			& 
			+ \frac{1}{2} \frac{\nu(2\beta_2-1)}{4}  \| \nabla  u_{n-1}\|^2
			+ \frac{1}{2} \frac{4}{\nu(2\beta_2-1)} \Big( \beta_0 2^{-1/2} K_2
			\| \nabla u_{n,\beta }\|^{\frac{1}{2}}    \| \nabla u_{n+1} \|^{\frac{1}{2}} \Big)^2 \notag \\
			=&
			\frac{\nu(2\beta_2-1)}{8} \big( \| \nabla  u_{n}\|^2 
			+ \| \nabla  u_{n-1}\|^2 \big)
			+ \frac{(\beta_1^2 + \beta_0^2) K_2^2}{\nu(2\beta_2-1)}  
			\| \nabla u_{n,\beta }\|  \| \nabla u_{n+1} \| \notag \\
			\leq& \frac{\nu(2\beta_2-1)}{8} \big( 4 \| \nabla u_{n+1} \|^2 + \| \nabla  u_{n}\|^2 + \| \nabla  u_{n-1}\|^2 \big)
			+ \frac{ K_2^4(\beta_0^2 + \beta_1^2)^2 } {2\nu^3 (2\beta_2-1)^3} \| \nabla u_{n,\beta }\|^2. \notag 
		\end{align}
		We use \eqref{eq:identity-1}, \eqref{eq:identity-2} and \eqref{eq:u_nbeta-dot-nabla-unbeta} to have
\begin{confidential}
	\color{darkblue}
	\begin{align*}
		& 
		\big({\alpha _{2}}{u_{n+1}}+{\alpha _{1}}{u_{n}}+{\alpha _{0}}{u_{n-1}}, u_{n+1} \big)
		+ \nu \Delta t (\nabla u_{n,\beta }, \nabla u_{n+1})
		+ \Delta t b(u_{n,\beta} \cdot \nabla u_{n,\beta }, u_{n+1}) \\
		& 
		\geq 
		\begin{Vmatrix}
			{u_{n+1}} \\
			{u_{n}}
		\end{Vmatrix}_{G(\theta )}^{2}
		- 
		\begin{Vmatrix}
			{u_{n}} \\
			{u_{n-1}}
		\end{Vmatrix}_{G(\theta )}^{2}
		+ \frac{\theta}{2}  \| u_{n+1} -  u_{n} \|^2
		+ \frac{1}{4} ( 1 - \theta ) \| u_{n+1} -  u_{n-1} \|^2 \\
		& \quad
		+ \nu \Delta t \Big( \frac{3\beta_2 -1}{2} \| \nabla u_{n+1} \|^2
		- \frac{\beta_1}{2}  \|\nabla u_{n} \|^2
		- \frac{\beta_0}{2}  \|\nabla u_{n-1} \|^2 \Big) \\
		& \quad
		+ \nu \Delta t \Big(  \frac{\beta_1}{2} \| ( \nabla u_{n+1} + \nabla u_{n} ) \|^2
		+ \frac{\beta_0}{2} \| ( \nabla u_{n+1} + \nabla u_{n-1} ) \|^2 \Big) \\
		& \quad 
		- \frac{\nu \Delta t (2\beta_2-1)}{8} \big( 4 \| \nabla u_{n+1} \|^2 + \| \nabla  u_{n}\|^2 + \| \nabla  u_{n-1}\|^2 \big)
		- \frac{\Delta t K_2^4(\beta_0^2 + \beta_1^2)^2 } {2\nu^3 (2\beta_2-1)^3} \| \nabla u_{n,\beta }\|^2 \\
		& 
		= 
		\begin{Vmatrix}
			{u_{n+1}} \\
			{u_{n}}
		\end{Vmatrix}_{G(\theta)}^{2}
		- 
		\begin{Vmatrix}
			{u_{n}} \\
			{u_{n-1}}
		\end{Vmatrix}_{G(\theta)}^{2}
		+ \frac{\theta}{2}  \| u_{n+1} -  u_{n} \|^2
		+ \frac{1}{4}(1 - \theta) \| u_{n+1} -  u_{n-1} \|^2 \\
		&\quad
		+ \nu \Delta t \frac{3\beta_2 -1}{2} \| \nabla u_{n+1} \|^2
		- \nu \Delta t  \frac{\beta_1}{2}  \|\nabla u_{n} \|^2
		- \nu \Delta t  \frac{\beta_0}{2}  \|\nabla u_{n-1} \|^2 \\
		& \quad
		+ \nu \Delta t \Big( \frac{\beta_1}{2} \|( \nabla u_{n+1} + \nabla u_{n} )\|^2
		+ \frac{\beta_0}{2} \|( \nabla u_{n+1} + \nabla u_{n-1} )\|^2 \Big) \\
		& \quad
		- \Delta t \frac{\nu(2\beta_2-1)}{8}  \| \nabla  u_{n}\|^2
		- \Delta t  \frac{\nu(2\beta_2-1)}{8}  \| \nabla  u_{n-1}\|^2 \\
		& \quad
		- \Delta t \frac{\nu (2\beta_2-1)}{ 2 } \| \nabla u_{n+1} \|^2
		- \Delta t \frac{ K_2^4(\beta_0^2 + \beta_1^2)^2 } { 2 \nu^3 (2\beta_2-1)^3} \| \nabla u_{n,\beta }\|^2  \\
		& 
		= 
		\begin{Vmatrix}
			{u_{n+1}} \\
			{u_{n}}
		\end{Vmatrix}_{G(\theta)}^{2}
		- 
		\begin{Vmatrix}
			{u_{n}} \\
			{u_{n-1}}
		\end{Vmatrix}_{G(\theta)}^{2}
		+ \frac{\theta}{2}  \| u_{n+1} -  u_{n} \|^2
		+ \frac{1}{4}(1 - \theta) \| u_{n+1} -  u_{n-1} \|^2 \\
		& \quad
		+ \Delta t \nu 
		\Big[\frac{3\beta_2 -1}{2} - \frac{(2\beta_2-1)}{2} \Big]
		\| \nabla u_{n+1} \|^2
		- \Delta t \nu \Big[\frac{\beta_1}{2} + \frac{(2\beta_2-1)}{8} \Big] \|\nabla u_{n} \|^2
		- \Delta t \nu \Big[\frac{\beta_0}{2} + \frac{(2\beta_2-1)}{8}  \Big] \|\nabla u_{n-1} \|^2 \\
		& \quad
		+ \nu \Delta t \Big( \frac{\beta_1}{2} \|( \nabla u_{n+1} + \nabla u_{n}) \|^2
		+ \frac{\beta_0}{2} \|( \nabla u_{n+1} + \nabla u_{n-1} )\|^2 \Big)
		- \Delta t 
		\frac{ K_2^4(\beta_0^2 + \beta_1^2)^2 } { 2 \nu^3 (2\beta_2-1)^3} \| \nabla u_{n,\beta }\|^2  \\
		& 
		= 
		\begin{Vmatrix}
			{u_{n+1}} \\
			{u_{n}}
		\end{Vmatrix}_{G(\theta)}^{2}
		- 
		\begin{Vmatrix}
			{u_{n}} \\
			{u_{n-1}}
		\end{Vmatrix}_{G(\theta)}^{2}
		+ \frac{\theta}{2}  \| u_{n+1} -  u_{n} \|^2
		+ \frac{1}{4}(1 - \theta) \| u_{n+1} -  u_{n-1} \|^2 \\
		& \quad
		+ \Delta t \nu  \frac{\beta_2}{2} \| \nabla u_{n+1} \|^2
		- \Delta t \nu \Big[\frac{\beta_1}{2} + \frac{(2\beta_2-1)}{8} \Big] \|\nabla u_{n} \|^2
		- \Delta t \nu \Big[\frac{\beta_0}{2}  + \frac{(2\beta_2-1)}{8} \Big]\|\nabla u_{n-1} \|^2 \\
		& \quad
		+ \nu \Delta t \Big(  \frac{\beta_1}{2} \|( \nabla u_{n+1} + \nabla u_{n} )\|^2
		+ \frac{\beta_0}{2} \|( \nabla u_{n+1} + \nabla u_{n-1} )\|^2 \Big)
		- \Delta t \frac{ K_2^4(\beta_0^2 + \beta_1^2)^2 } { 2 \nu^3 (2\beta_2-1)^3} \| \nabla u_{n,\beta }\|^2 \\
		& 
		= 
		\begin{Vmatrix}
			{u_{n+1}} \\
			{u_{n}}
		\end{Vmatrix}_{G(\theta)}^{2}
		- 
		\begin{Vmatrix}
			{u_{n}} \\
			{u_{n-1}}
		\end{Vmatrix}_{G(\theta)}^{2}
		+ \frac{\theta}{2}  \| u_{n+1} -  u_{n} \|^2
		+ \frac{1}{4}(1 - \theta) \| u_{n+1} -  u_{n-1} \|^2 \\
		& \quad
		+ \Delta t \nu \Big[ \frac{(2\beta_2-1)}{4} + \frac{1}{4} \Big]
		\| \nabla u_{n+1} \|^2
		- \Delta t \nu \Big[ \frac{\beta_1}{2} + \frac{(2\beta_2-1)}{8} \Big] \|\nabla u_{n} \|^2
		- \Delta t \nu \Big[\frac{\beta_0}{2}  + \frac{(2\beta_2-1)}{8} \Big]\|\nabla u_{n-1} \|^2
		\\
		& \quad
		+ \nu \Delta t \Big( \frac{\beta_1}{2} \|( \nabla u_{n+1} - \nabla u_{n} ) \|^2
		+ \frac{\beta_0}{2} \| ( \nabla u_{n+1} - \nabla u_{n-1} ) \|^2
		\Big)
		- \Delta t \frac{ K_2^4(\beta_0^2 + \beta_1^2)^2 } { 2 \nu^3 (2\beta_2-1)^3} \| \nabla u_{n,\beta }\|^2 \\
		& 
		= 
		\begin{Vmatrix}
			{u_{n+1}} \\
			{u_{n}}
		\end{Vmatrix}_{G(\theta)}^{2}
		- 
		\begin{Vmatrix}
			{u_{n}} \\
			{u_{n-1}}
		\end{Vmatrix}_{G(\theta)}^{2}
		+ \frac{\theta}{2}  \| u_{n+1} -  u_{n} \|^2
		+ \frac{1}{4}(1 - \theta) \| u_{n+1} -  u_{n-1} \|^2 \\
		& \quad
		+\Delta t \frac{\nu(2\beta_2-1)}{4} \| \nabla u_{n+1} \|^2
		+ \Delta t  \frac{\nu}{4} \| \nabla u_{n+1} \|^2
		+ \Delta t \cancel{\Big( \nu  \frac{\beta_0}{2}  + \frac{\nu(2\beta_2-1)}{8}  \Big)\|\nabla u_{n} \|^2}
		- \Delta t \cancel{\Big( \nu  \frac{\beta_0}{2}  + \frac{\nu(2\beta_2-1)}{8}  \Big)\|\nabla u_{n} \|^2} \\
		& \quad
		- \Delta t \nu \Big[\frac{\beta_1}{2} + \frac{(2\beta_2-1)}{8} \Big] \|\nabla u_{n} \|^2
		- \Delta t \nu \Big[\frac{\beta_0}{2}  + \frac{(2\beta_2-1)}{8} \Big]\|\nabla u_{n-1} \|^2 \\
		& \quad
		+ \nu \Delta t \Big(  \frac{\beta_1}{2} \| ( \nabla u_{n+1} + \nabla u_{n} ) \|^2
		+ \frac{\beta_0}{2} \| ( \nabla u_{n+1} + \nabla u_{n-1} ) \|^2
		\Big)
		- \Delta t \frac{ K_2^4(\beta_0^2 + \beta_1^2)^2 } { 2 \nu^3 (2\beta_2-1)^3} \| \nabla u_{n,\beta }\|^2 \\
		& 
		= 
		\begin{Vmatrix}
			{u_{n+1}} \\
			{u_{n}}
		\end{Vmatrix}_{G(\theta)}^{2}
		- 
		\begin{Vmatrix}
			{u_{n}} \\
			{u_{n-1}}
		\end{Vmatrix}_{G(\theta)}^{2}
		+ \frac{\theta}{2}  \| u_{n+1} -  u_{n} \|^2
		+ \frac{1}{4}(1 - \theta) \| u_{n+1} -  u_{n-1} \|^2 \\
		& \quad
		+
		\Delta t \frac{\nu(2\beta_2-1)}{4} \| \nabla u_{n+1} \|^2
		+ \Delta t \nu \Big[  \frac{1}{4} \| \nabla u_{n+1} \|^2
		+ \Big( \frac{\beta_0}{2}  + \frac{(2\beta_2-1)}{8}  \Big)\|\nabla u_{n} \|^2 \Big] \\
		& \quad
		- \Delta t \nu \Big[ \frac{\beta_0}{2}  + \frac{(2\beta_2-1)}{8}  + \frac{\beta_1}{2} + \frac{(2\beta_2-1)}{8}  \Big] \|\nabla u_{n} \|^2
		- \Delta t \nu \Big[ \frac{\beta_0}{2}  + \frac{(2\beta_2-1)}{8}  \Big] \|\nabla u_{n-1} \|^2 \\
		& \quad
		+ \nu \Delta t \Big(  \frac{\beta_1}{2} \|( \nabla u_{n+1} + \nabla u_{n} )\|^2
		+ \frac{\beta_0}{2} \| ( \nabla u_{n+1} + \nabla u_{n-1} ) \|^2 \Big)
		- \Delta t \frac{ K_2^4(\beta_0^2 + \beta_1^2)^2 } { 2 \nu^3 (2\beta_2-1)^3} \| \nabla u_{n,\beta }\|^2 \\
		& 
		= 
		\begin{Vmatrix}
			{u_{n+1}} \\
			{u_{n}}
		\end{Vmatrix}_{G(\theta)}^{2}
		- 
		\begin{Vmatrix}
			{u_{n}} \\
			{u_{n-1}}
		\end{Vmatrix}_{G(\theta)}^{2}
		+ \frac{\theta}{2}  \| u_{n+1} -  u_{n} \|^2
		+ \frac{1}{4}(1 - \theta) \| u_{n+1} -  u_{n-1} \|^2 \\
		& \quad
		+
		\Delta t \frac{\nu(2\beta_2-1)}{4} \| \nabla u_{n+1} \|^2
		+ \Delta t \nu \Big[  \frac{1}{4} \| \nabla u_{n+1} \|^2
		+ \Big(  \frac{\beta_0}{2}  + \frac{(2\beta_2-1)}{8}  \Big)\|\nabla u_{n} \|^2 \Big]
		\\
		& \qquad \qquad \qquad \qquad \qquad \qquad \quad     
		- \Delta t \nu \Big[  \frac{1}{4} \|\nabla u_{n} \|^2
		+ \Big( \frac{\beta_0}{2}  + \frac{(2\beta_2-1)}{8}  \Big)\|\nabla u_{n-1} \|^2 \Big] \\
		& \quad 
		+ \nu \Delta t \Big( \frac{\beta_1}{2} \| ( \nabla u_{n+1} + \nabla u_{n} ) \|^2
		+ \frac{\beta_0}{2} \| ( \nabla u_{n+1} + \nabla u_{n-1} ) \|^2 \Big)
		- \Delta t \frac{ K_2^4(\beta_0^2 + \beta_1^2)^2 } { 2 \nu^3 (2\beta_2-1)^3} \| \nabla u_{n,\beta }\|^2,
	\end{align*}
	and therefore
	\begin{align*}
		& 
		\begin{Vmatrix}
			{u_{n+1}} \\
			{u_{n}}
		\end{Vmatrix}_{G(\theta)}^{2}
		- 
		\begin{Vmatrix}
			{u_{n}} \\
			{u_{n-1}}
		\end{Vmatrix}_{G(\theta)}^{2}
		+ \frac{\theta}{2}  \| u_{n+1} -  u_{n} \|^2
		+ \frac{1}{4}(1 - \theta) \| u_{n+1} -  u_{n-1} \|^2 \\
		& \quad
		+
		\Delta t \frac{\nu(2\beta_2-1)}{4} \| \nabla u_{n+1} \|^2
		+ \Delta t \nu \Big[  \frac{1}{4} \| \nabla u_{n+1} \|^2
		+ \Big(  \frac{\beta_0}{2}  + \frac{(2\beta_2-1)}{8}  \Big)\|\nabla u_{n} \|^2 \Big]
		\\
		& \qquad \qquad \qquad \qquad \qquad \qquad \quad     
		- \Delta t \nu \Big[  \frac{1}{4} \|\nabla u_{n} \|^2
		+ \Big( \frac{\beta_0}{2}  + \frac{(2\beta_2-1)}{8}  \Big)\|\nabla u_{n-1} \|^2 \Big] \\
		& \quad 
		+ \nu \Delta t \Big( \frac{\beta_1}{2} \| ( \nabla u_{n+1} + \nabla u_{n} ) \|^2
		+ \frac{\beta_0}{2} \| ( \nabla u_{n+1} + \nabla u_{n-1} ) \|^2 \Big)
		- \Delta t \frac{ K_2^4(\beta_0^2 + \beta_1^2)^2 } { 2 \nu^3 (2\beta_2-1)^3} \| \nabla u_{n,\beta }\|^2 
		\\
		& 
		\leq
		\big({\alpha _{2}}{u_{n+1}}+{\alpha _{1}}{u_{n}}+{\alpha _{0}}{u_{n-1}}, u_{n+1} \big)
		+ \nu \Delta t (\nabla u_{n,\beta}, \nabla u_{n+1})
		+ \Delta t b( u_{n,\beta } \cdot \nabla u_{n,\beta }, u_{n+1})
		\\
		& 
		= 
		\Delta t (f(t_{n,\beta}) , u_{n+1})
		\leq \Delta t \| f(t_{n,\beta}) \| \| u_{n+1} \| 
		\leq \frac{\Delta t}{\sqrt{\lambda_1}} \| f(t_{n,\beta}) \| \| \nabla u_{n+1} \| \\
		&= 
		\Delta t  \sqrt{\frac{4}{\nu(2 \beta_{2} - 1) \lambda_1}} \| f(t_{n,\beta}) \| \sqrt{\frac{\nu(2 \beta_{2} - 1)}{4}} \| \nabla u_{n+1} \| \\
		&\leq 
		\frac{\Delta t}{2} \Big( \frac{4}{\nu(2 \beta_{2} - 1) \lambda_1} \| f(t_{n,\beta}) \|^2 + \frac{\nu(2 \beta_{2} - 1)}{4} \| \nabla u_{n+1} \|^2 \Big) \\
		&= 
		\frac{2 \Delta t}{\nu(2 \beta_{2} - 1) \lambda_1} \| f \|_{\infty}^2
		+ \frac{\Delta t \nu(2 \beta_{2} - 1)}{8} \| \nabla u_{n+1} \|^2 
	\end{align*}
	\normalcolor
\end{confidential}
		\begin{align}
			& 
			\begin{Vmatrix}
				{u_{n+1}} \\
				{u_{n}}
			\end{Vmatrix}_{G(\theta)}^{2}
			- 
			\begin{Vmatrix}
				{u_{n}} \\
				{u_{n-1}}
			\end{Vmatrix}_{G(\theta)}^{2}
			+ \frac{\theta}{2}  \| u_{n+1} -  u_{n} \|^2
			+ \frac{1}{4}(1 - \theta) \| u_{n+1} -  u_{n-1} \|^2 
			\label{eq:test-un+1-eq2} \\
			& 
			+\Delta t \frac{\nu(2\beta_2-1)}{4} \| \nabla u_{n+1} \|^2
			+ \Delta t \nu \Big[  \frac{1}{4} \| \nabla u_{n+1} \|^2
			+ \Big(  \frac{\beta_0}{2}  + \frac{(2\beta_2-1)}{8}  \Big)\|\nabla u_{n} \|^2 \Big]
			\notag \\
			& \qquad \qquad \qquad \qquad \qquad \qquad \  
			- \Delta t \nu \Big[  \frac{1}{4} \|\nabla u_{n} \|^2
			+ \Big( \frac{\beta_0}{2}  + \frac{(2\beta_2-1)}{8}  \Big)\|\nabla u_{n-1} \|^2 \Big] \notag \\
			& 
			+\! \nu \Delta t \Big( \frac{\beta_1}{2} \| ( \nabla u_{n+1} \!+\! \nabla u_{n} ) \|^2
			\!+\! \frac{\beta_0}{2} \| ( \nabla u_{n+1} \!+\! \nabla u_{n-1} ) \|^2 \Big)
			\!- \!\Delta t \frac{ K_2^4(\beta_0^2 + \beta_1^2)^2 } { 2 \nu^3 (2\beta_2-1)^3} \| \nabla u_{n,\beta }\|^2 
			\notag \\
			& 
			\leq
			\big({\alpha _{2}}{u_{n+1}}+{\alpha _{1}}{u_{n}}+{\alpha _{0}}{u_{n-1}}, u_{n+1} \big)
			\!+\! \nu \Delta t (\nabla u_{n,\beta}, \nabla u_{n+1})
			\!+\! \Delta t b( u_{n,\beta } \cdot \nabla u_{n,\beta }, u_{n+1}). \notag  
		\end{align}
		By Cauchy-Schwarz inequality, Poincar\'e inequality in \eqref{eq:PF-ineq}, Young's inequality and \eqref{eq:test-un+1-eq2}, \eqref{eq:test with un+1} becomes 
		\begin{align*}
			& 
			\begin{Vmatrix}
				{u_{n+1}} \\
				{u_{n}}
			\end{Vmatrix}_{G(\theta )}^{2}
			+ \frac{\theta}{2}  \| u_{n+1} -  u_{n} \|^2
			+ \frac{1}{4} ( 1 - \theta ) \| u_{n+1} -  u_{n-1} \|^2 \\
			& \quad
			+ \Delta t \frac{\nu(2\beta_2-1)}{8} \| \nabla u_{n+1} \|^2
			+ \Delta t \nu \Big[  \frac{1}{4} \| \nabla u_{n+1} \|^2
			+ \Big(  \frac{\beta_0}{2}  + \frac{(2\beta_2-1)}{8} \Big)\|\nabla u_{n} \|^2 \Big] \\
			& \quad
			+ \nu \Delta t \Big(  \frac{\beta_1}{2} \| ( \nabla u_{n+1} + \nabla u_{n} ) \|^2
			+ \frac{\beta_0}{2} \| ( \nabla u_{n+1} + \nabla u_{n-1} ) \|^2
			\Big) \\
			\leq&
			\begin{Vmatrix}
				{u_{n}} \\
				{u_{n-1}}
			\end{Vmatrix}_{G(\theta )}^{2}
			+ \Delta t \nu \Big[ \frac{1}{4} \|\nabla u_{n} \|^2
			+ \Big( \frac{\beta_0}{2}  + \frac{(2\beta_2-1)}{8} \Big) \|\nabla u_{n-1} \|^2 \Big] \\
			& \quad 
			\Delta t \frac{ K_2^4(\beta_0^2 + \beta_1^2)^2 } { 2 \nu^3 (2\beta_2-1)^3} \| \nabla u_{n,\beta }\|^2
			+ \frac{2 \Delta t}{\nu(2 \beta_{2} - 1) \lambda_1} \| f \|_{\infty}^2.
		\end{align*}
		Adding up for $n$ from $i$ to $N-1$ yields
\begin{confidential}
	\color{darkblue}
	\begin{align*}
		& 
		\begin{Vmatrix}
			{u_{N}} \\
			{u_{N-1}}
		\end{Vmatrix}_{G(\theta )}^{2}
		+ \sum_{j=i}^{N-1} \Big(  \frac{\theta}{2} \| u_{j+1} -  u_{j} \|^2
		+ \frac{1}{4} ( 1 - \theta ) \| u_{j+1} -  u_{j-1} \|^2 \Big) \\
		& 
		+ \Delta t \frac{\nu(2\beta_2-1)}{8} \sum_{j=i}^{N-1} \| \nabla u_{j+1} \|^2
		+ \Delta t \nu \Big[  \frac{1}{4} \| \nabla u_{N} \|^2
		+ \Big(  \frac{\beta_0}{2}  + \frac{(2\beta_2-1)}{8}  \Big)\|\nabla u_{N-1} \|^2 \Big] \\
		& 
		+ \nu \Delta t \sum_{j=i}^{N-1}\Big(  \frac{\beta_1}{2} \| ( \nabla u_{j+1} - \nabla u_{j} ) \|^2
		+ \frac{\beta_0}{2} \| ( \nabla u_{j+1} - \nabla u_{j-1} ) \|^2 \Big)
		\\
		\leq&
		\begin{Vmatrix}
			{u_{i}} \\
			{u_{i-1}}
		\end{Vmatrix}_{G(\theta )}^{2}
		+ \Delta t \nu \Big[ \frac{1}{4} \|\nabla u_{i} \|^2
		+ \Big( \frac{\beta_0}{2}  + \frac{(2\beta_2-1)}{8}  \Big)\|\nabla u_{i-1} \|^2 \Big] \\
		& 
		+ \Delta t \frac{ K_2^4(\beta_0^2 + \beta_1^2)^2 } { 2 \nu^3 (2\beta_2-1)^3} \sum_{j=i}^{N-1} \| \nabla u_{j,\beta }\|^2
		+ \frac{2 (N-i) \Delta t}{\nu(2 \beta_{2} - 1) \lambda_1} \| f \|_{\infty}^2,
	\end{align*}
	\normalcolor
\end{confidential}
		\begin{align*}
			& 
			\begin{Vmatrix}
				{u_{N}} \\
				{u_{N-1}}
			\end{Vmatrix}_{G(\theta )}^{2}
			+ \Delta t \nu \Big[  \frac{1}{4} \| \nabla u_{N} \|^2
			+ \Big(  \frac{\beta_0}{2}  + \frac{(2\beta_2-1)}{8}  \Big)\|\nabla u_{N-1} \|^2 \Big]
			\\
			& 
			+ \Delta t \frac{\nu(2\beta_2-1)}{8} \sum_{j=i}^{N-1} \| \nabla u_{j+1} \|^2
			+ \sum_{j=i}^{N-1} \Big(  \frac{\theta}{2} \| u_{j+1} -  u_{j} \|^2
			+ \frac{1}{4} ( 1 - \theta ) \| u_{j+1} -  u_{j-1} \|^2 \Big)
			\\
			& 
			+ \nu \Delta t \sum_{j=i}^{N-1}\Big( \frac{\beta_1}{2} \|( \nabla u_{j+1} - \nabla u_{j} )\|^2
			+ \frac{\beta_0}{2} \|( \nabla u_{j+1} - \nabla u_{j-1} )\|^2 \Big)
			\\ 
			\leq&
			\begin{Vmatrix}
				{u_{i}} \\
				{u_{i-1}}
			\end{Vmatrix}_{G(\theta)}^{2}
			+ \Delta t \nu \Big[  \frac{1}{4} \|\nabla u_{i} \|^2
			+ \Big( \frac{\beta_0}{2}  + \frac{(2\beta_2-1)}{8}  \Big)\|\nabla u_{i-1} \|^2 \Big] \\
			& 
			+ \frac{ K_2^4(\beta_0^2 + \beta_1^2)^2 } { 2 \nu^3 (2\beta_2-1)^3} 
			\Delta t  \sum_{j=i}^{N-1} \| \nabla u_{j,\beta }\|^2
			+ \frac{2 (N-i) \Delta t}{\nu(2 \beta_{2} - 1) \lambda_1} \| f \|_{\infty}^2,
		\end{align*}
		which by \eqref{eq:stabilityL2-3} yields \eqref{eq:u_{n+1}L^2(H^1)}.
\begin{confidential}
	\color{darkblue}
	\begin{align*}
		\Delta t \sum_{j=i}^{N-1} \| \nabla u_{j, \beta} \|^2
		\leq 
		\frac{2}{\nu}
		\begin{Vmatrix}
			{u_{i}} \\
			{u_{i-1}}
		\end{Vmatrix}_{G(\theta )}^{2}
		+ \frac{1}{\nu^2 \lambda_1} (N-i) \Delta t \|f\|^2_{\infty} 
	\end{align*}
	\begin{align*}
		&\Delta t \frac{\nu(2\beta_2-1)}{8} \sum_{j=i}^{N-1} \| \nabla u_{j+1} \|^2 \\
		&+
		\begin{Vmatrix}
			{u_{N}} \\
			{u_{N-1}}
		\end{Vmatrix}_{G(\theta )}^{2}
		+ \Delta t \nu \Big[  \frac{1}{4} \| \nabla u_{N} \|^2
		+ \Big(  \frac{\beta_0}{2}  + \frac{(2\beta_2-1)}{8}  \Big)\|\nabla u_{N-1} \|^2 \Big] \\
		&+ \sum_{j=i}^{N-1} \Big(  \frac{\theta}{2} \| u_{j+1} -  u_{j} \|^2
		+ \frac{1}{4} ( 1 - \theta ) \| u_{j+1} -  u_{j-1} \|^2 \Big) \\
		&+ \nu \Delta t \sum_{j=i}^{N-1}\Big( \frac{\beta_1}{2} \|( \nabla u_{j+1} - \nabla u_{j} )\|^2
		+ \frac{\beta_0}{2} \|( \nabla u_{j+1} - \nabla u_{j-1} )\|^2 \Big) \\
		\leq&
		\begin{Vmatrix}
			{u_{i}} \\
			{u_{i-1}}
		\end{Vmatrix}_{G(\theta)}^{2}
		+ \Delta t \nu \Big[ \frac{1}{4} \|\nabla u_{i} \|^2
		+ \Big( \frac{\beta_0}{2}  + \frac{(2\beta_2-1)}{8}  \Big)\|\nabla u_{i-1} \|^2 \Big] \\
		&+ \frac{ K_2^4(\beta_0^2 + \beta_1^2)^2 } { 2 \nu^3 (2\beta_2-1)^3}
		\Big( \frac{2}{\nu}
		\begin{Vmatrix}
			{u_{i}} \\
			{u_{i-1}}
		\end{Vmatrix}_{G(\theta )}^{2}
		+ \frac{1}{\nu^2 \lambda_1} (N-i) \Delta t \|f\|^2_{\infty} \Big)
		+ \frac{2 (N-i) \Delta t}{\nu(2 \beta_{2} - 1) \lambda_1} \| f \|_{\infty}^2 \\
		=&\Big( 1 + \frac{ K_2^4(\beta_0^2 + \beta_1^2)^2}{\nu^4 (2\beta_2-1)^3} \Big) 
		\begin{Vmatrix}
			{u_{i}} \\
			{u_{i-1}}
		\end{Vmatrix}_{G(\theta)}^{2} 
		+ \Delta t \nu \Big[ \frac{1}{4} \|\nabla u_{i} \|^2
		+ \Big(\frac{\beta_0}{2} + \frac{(2\beta_2-1)}{8} \Big)\|\nabla u_{i-1} \|^2 \Big] \\
		&+ \frac{(N-i) \Delta t}{\nu(2 \beta_{2} - 1) \lambda_1}
		\Big( 2 + \frac{ K_2^4(\beta_0^2 + \beta_1^2)^2 } { 2 \nu^4 (2\beta_2-1)^2} \Big) \| f \|_{\infty}^2 
	\end{align*}
	\normalcolor
\end{confidential}
    \end{proof}

	We denote 
	\begin{align}
		\widehat{C}_{h}(\theta) 
		= \frac{C_h(\theta)}{\frac{\theta^3}{2} \Big[ 1 - \frac{1}{2} \theta (1 - \theta) \Big]}, \quad 
		\rho_0 
		= \frac{C_{\epsilon}(\theta)}{\nu^2 \lambda_1^2 \theta^3 \Big[ 1 - \frac{1}{2} \theta (1 - \theta) \Big]} 
		\|f\|^2_{\infty}.  \label{eq:note-rho}
	\end{align}
	We have the following lemma about the upper bound of model energy independent of the initial conditions. 

	\begin{corollary}
		\label{coro:L2-bound-no-init}
		Under the time step restriction in \eqref{eq:dt-limit-1}, 
		if $N$ is large enough such that 
		\begin{align}
			(N-1) \Delta t > T_{\ast} \big( \| u_{0}\|, \| u_{1}\|, \|f\|_{\infty} \big)
			= \frac{4 C_{\epsilon}(\theta)}{\nu \lambda_1} \ln \Big( \frac{ \widehat{C}_{h}(\theta) \big( \|u_1 \|^2 + \|u_0 \|^2 \big)}{\rho_0} \Big),
			\label{eq:N-T-ast-L2}
		\end{align}
		the approximate velocity in \Cref{sche:DLN-NSE} satisfies
		\begin{align}
			\label{eq:stabilityL2-0}
			& \|u_{N}\|^2 \leq 2 \rho_0,
		\end{align}
	\end{corollary}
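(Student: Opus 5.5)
The plan is to start from \eqref{eq:L2bound0} in Theorem \ref{thm:stab-L2} and make the transient term at most $\rho_0$ once $(N-1)\Delta t > T_\ast$. The steady part is already of the right size, so it only remains to kill the initial-data part.

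First, by the lower bound on $h_{11}(\theta)$ in \eqref{eq:h-lower-bound}, divide \eqref{eq:L2bound0} by $\frac{\theta^3}{2}[1-\frac12\theta(1-\theta)]$. Using $\max\{h_{11},h_{22}\}<C_h(\theta)$ from \eqref{eq:1-epsi-cond} to bound $\|(u_1,u_0)\|_{H(\theta)}^2 \le C_h(\theta)(\|u_1\|^2+\|u_0\|^2)$, and recalling the notation \eqref{eq:note-rho}, this gives
\begin{align*}
\|u_N\|^2 \le \frac{\widehat{C}_h(\theta)\big(\|u_1\|^2+\|u_0\|^2\big)}{(1+\epsilon)^{N-1}} + \rho_0 .
\end{align*}
Here the forcing term matches exactly: $\frac{C_\epsilon}{2\nu^2\lambda_1^2}\|f\|_\infty^2$ divided by $\frac{\theta^3}{2}[\cdots]$ is $\rho_0$.

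Next, it suffices to show $(1+\epsilon)^{N-1}\ge \widehat{C}_h(\|u_1\|^2+\|u_0\|^2)/\rho_0$, i.e.\ $(N-1)\ln(1+\epsilon)\ge \ln(\widehat{C}_h(\|u_1\|^2+\|u_0\|^2)/\rho_0)$. If the logarithm is nonpositive, the claim is immediate. Otherwise I need a lower bound for $\ln(1+\epsilon)$ proportional to $\nu\lambda_1\Delta t$; this is the only delicate step.

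For that step I use both halves of \eqref{eq:1-epsi-cond}. From $1/\epsilon>1/4$ we get $\epsilon<4$, and concavity of $\ln(1+x)$ on $[0,4]$ gives $\ln(1+\epsilon)\ge \frac{\ln 5}{4}\epsilon \ge \epsilon/4$. From $1/\epsilon<C_\epsilon/(\nu\lambda_1\Delta t)$ we get $\epsilon>\nu\lambda_1\Delta t/C_\epsilon$. Hence
\begin{align*}
(N-1)\ln(1+\epsilon) > \frac{\nu\lambda_1 (N-1)\Delta t}{4C_\epsilon(\theta)} > \ln\Big(\frac{\widehat{C}_h(\|u_1\|^2+\|u_0\|^2)}{\rho_0}\Big)
\end{align*}
by \eqref{eq:N-T-ast-L2}. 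Therefore the transient term is at most $\rho_0$, and $\|u_N\|^2\le 2\rho_0$, which is \eqref{eq:stabilityL2-0}. The main point to check is that the constant $4$ in $T_\ast$ is consistent with the bound $\epsilon<4$; the concavity estimate above confirms this.
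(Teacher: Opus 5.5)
Your proof is correct and is essentially the paper's argument. The paper also divides \eqref{eq:L2bound0} by the lower bound on $h_{11}(\theta)$ and uses $C_h(\theta)$ and $\rho_0$ to reach $\|u_N\|^2\le \widehat{C}_h(\theta)(\|u_1\|^2+\|u_0\|^2)(1+\epsilon)^{-(N-1)}+\rho_0$; it then uses $1+x\ge \exp(x/4)$ on $(0,4)$, which is your concavity bound $\ln(1+\epsilon)\ge\epsilon/4$, together with $1/\epsilon< C_\epsilon(\theta)/(\nu\lambda_1\Delta t)$ to make the transient term smaller than $\rho_0$.
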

	\begin{proof}
		By \eqref{eq:L2bound0} in Theorem \ref{thm:stab-L2}, \eqref{eq:h-lower-bound}-\eqref{eq:1-epsi-cond} in Lemma \ref{lem:H-stab}
		\begin{align}
			&\frac{\theta^3}{2} \Big[ 1 - \frac{1}{2} \theta (1 - \theta) \Big] \| u_{N} \|^{2} 
			\label{eq:L2-uniform-eq1} \\
			\leq& 
			\frac{1}{(1+\epsilon)^{N-1}} C_{h}(\theta) \big( \| u_1 \|^2  + \| u_0 \|^2 \big)
			+ \frac{C_{\varepsilon}(\theta)}{2 \nu^2 \lambda_1^2} 
			\|f\|^2_{\infty}. \notag 
		\end{align}
		We use the notation in \eqref{eq:note-rho} and have 
		\begin{align*}
			\| u_{N} \|^{2} 
			\leq \frac{1}{(1+\epsilon)^{N-1}} \widehat{C}_{h}(\theta) \big( \| u_1 \|^2  + \| u_0 \|^2 \big) + \rho_0.
		\end{align*}
		By the fact that
		\begin{align*}
			1 + x \geq \exp(x/4), \qquad \forall x \in (0,4), 
		\end{align*}
\begin{confidential}
	\color{darkblue}
	\begin{align*}
		&F(x) = 1 + x - \exp(x/4), \quad F(0) = 1 + 0 - 1 = 0, \\
		&\text{if } F'(x) = 1 - \frac{1}{4}e^{x/4} > 0, \quad 
		\frac{1}{4}e^{x/4} < 1, \quad e^{x/4} < 4, \quad 
		\frac{x}{4} < \log (4), \quad x < 4 \log(4) (>4).
	\end{align*}
	\normalcolor
\end{confidential}
		and $\epsilon \in (0,4)$ in \eqref{eq:1-epsi-cond}, \eqref{eq:L2-uniform-eq1} becomes 
		\begin{align}
			\| u_{N} \|^{2}
			\leq \exp \big( - (N - 1)\epsilon/4 \big) \widehat{C}_{h}(\theta) \big( \| u_1 \|^2  + \| u_0 \|^2 \big) + \rho_0. 
			\label{eq:L2-uniform-eq2}
		\end{align}
\begin{confidential}
	\color{darkblue}
	We want 
	\begin{align*}
		&\exp \big( - (N - 1)\epsilon/4 \big) \widehat{C}_{h}(\theta) \big( \| u_1 \|^2  + \| u_0 \|^2 \big) < \rho_0, \ \ 
		\frac{\widehat{C}_{h}(\theta) \big( \| u_1 \|^2  + \| u_0 \|^2 \big)}{\rho_0} < \exp \big( (N - 1)\epsilon/4 \big) \\
		& \log \Big( \frac{\widehat{C}_{h}(\theta) \big( \| u_1 \|^2  + \| u_0 \|^2 \big)}{\rho_0} \Big) < \frac{(N-1) \epsilon}{4}, \ \ 
		\frac{4}{\epsilon} \log \Big( \frac{\widehat{C}_{h}(\theta) \big( \| u_1 \|^2  + \| u_0 \|^2 \big)}{\rho_0} \Big) < N-1,
	\end{align*}
	it suffices to have 
	\begin{align*}
		\frac{4 C_{\epsilon}(\theta)}{\nu \lambda_1 \Delta t}
		\log \Big( \frac{\widehat{C}_{h}(\theta) \big( \| u_1 \|^2  + \| u_0 \|^2 \big)}{\rho_0} \Big) < N-1, 
	\end{align*}
	which is by \eqref{eq:1-epsi-cond}.
	\normalcolor
\end{confidential}
		Let $N_{0}$ to be the minimal integer $N$ such that 
		\begin{align*}
			N - 1 > \frac{4 C_{\epsilon}(\theta)}{\nu \Delta t \lambda_1} \ln \Big( \frac{ \widehat{C}_{h}(\theta) \big( \|u_1 \|^2 + \|u_0 \|^2 \big)}{\rho_0} \Big).
		\end{align*}
		Then for $N \geq N_0$, we have \eqref{eq:N-T-ast-L2} and by \eqref{eq:1-epsi-cond}
		\begin{align*}
			N - 1 
			> \frac{4}{\epsilon} \ln \Big( \frac{ \widehat{C}_{h}(\theta) \big( \|u_1 \|^2 + \|u_0 \|^2 \big)}{\rho_0} \Big)
		\end{align*}
		or 
		\begin{align}
			\exp \big( - (N-1) \epsilon/4 \big) \widehat{C}_{h}(\theta) \big( \| u_1 \|^2  + \| u_0 \|^2 \big) < \rho_0.
			\label{eq:rho-inequality}
		\end{align}
		We combine \eqref{eq:L2-uniform-eq2} and \eqref{eq:rho-inequality} to achieve \eqref{eq:stabilityL2-0}. 
	\end{proof}

	\section{Long-time stability in $H^1$-norm} \label{sec:stability-H1}

	To seek a uniform bound for $\|\nabla u_{N}\|$ independent of initial conditions,  
	we first derive the bound on a finite time interval and then use this bound on successive intervals, via a discrete uniform Gr\"onwall Lemma \cite[pp.37-38]{TW06_SIAMNA}. 
	We make the following notations
	\begin{align}
		& 
		\kappa_1 = \frac{27 (2 - \theta^2) C_{\Omega}^2 K_2^2}{16 \nu^3},
		\label{eq:kappa-1}
		\\
		& 
		\kappa_2 = 3 + \frac{8 (2 - \theta^2) K_2^2 }{\nu^2 (2 - \theta)^2(1+\theta)},
		\label{eq:kappa-2}
		\\
		& 
		\kappa_3 = \frac{27 (2 - \theta^2) C_{\Omega}^2 \rho_0}{8 \nu^3},
		\label{eq:kappa-3}
		\\
		& 
		\kappa_4 = 3 + \frac{16 (2 - \theta^2) \rho_0}{\nu^2 (2 - \theta)^2(1+\theta)},
		\label{eq:kappa-4}
		\\
		& 
		A_{n+1} =
		\displaystyle
		\begin{Vmatrix}
			\vspace{.2cm}
			\nabla {u_{n+1}} \\
			\nabla {u_{n}}
		\end{Vmatrix}_{G(\theta )}^{2}, \label{eq:An}
	\end{align}
	and proceed by proving an intermediate recurrence inequality for $A_n$.
	\begin{lemma}
		\label{lemma:auxi-ineq}
		Assuming that 
		$\Delta t$ satisfies \eqref{eq:dt-limit-1}, we have
		\begin{align}
			\label{eq:H1-ineq1}
			A_{n+1} - A_n
			\leq&
			\frac{1}{2\nu} \Delta t \|f \|^2_{\infty}
			+ \Delta t \kappa_1 A_{n+1} \|\nabla u_{n,\beta}\|^2 + 
			\Delta t \kappa_1 A_n \|\nabla u_{n,\beta}\|^2,
		\end{align}
		and 
		\begin{align}  
			& 
			A_{n+1}
			\leq \kappa_2 A_n + \frac{\Delta t}{\nu} \|f\|^2_{\infty},
			\label{eq:H1-ineq2}
		\end{align}
		Moreover, if $n$ is large enough such that $(n-2) \Delta t > T_{\ast}$, 
		\begin{align}
			\label{eq:H1-ineq1-uniform}
			A_{n+1} - A_n
			\leq&
			\frac{1}{2\nu} \Delta t \|f \|^2_{\infty}
			+ \Delta t \kappa_3 A_{n+1} \|\nabla u_{n,\beta}\|^2 
			+ \Delta t \kappa_3 A_n \|\nabla u_{n,\beta}\|^2, 
		\end{align}
		and 
		\begin{align}  
			& A_{n+1}
			\leq 
			\kappa_4 A_n + \frac{\Delta t}{\nu} \|f\|^2_{\infty},
			\label{eq:H1-ineq2-uniform}
		\end{align}
		where $\kappa_1$, $\kappa_2$, $\kappa_3$, $\kappa_4$, and $A_n$ are given in \eqref{eq:kappa-1}-\eqref{eq:An}.
	\end{lemma}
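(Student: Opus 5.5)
Our plan is to test the dual form \eqref{eq:DLN-NSE-dual} with $-\Delta t\, A u_{n,\beta}$ (equivalently $-\Delta t\,\Delta u_{n,\beta}$, which is legitimate since $u_{n,\beta}\in D(A)$). We then apply the $G$-stability identity \eqref{eq:G-stab-eq} to the sequence $\{\nabla u_n\}$. Since $(\sum_\ell \alpha_\ell u_{n-1+\ell}, A u_{n,\beta}) = (\sum_\ell\alpha_\ell \nabla u_{n-1+\ell},\nabla u_{n,\beta})$, this gives
\begin{align*}
A_{n+1}-A_n + \Big\|\sum_{\ell=0}^2 a_\ell \nabla u_{n-1+\ell}\Big\|^2 + \nu\Delta t\|A u_{n,\beta}\|^2 = \Delta t\,(f(t_{n,\beta}),Au_{n,\beta}) - \Delta t\, b(u_{n,\beta},u_{n,\beta},Au_{n,\beta}).
\end{align*}
We drop the dissipation term. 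For the forcing we use Young's inequality, $\Delta t\|f\|\|Au_{n,\beta}\| \le \frac{\Delta t}{2\nu}\|f\|_\infty^2 + \frac{\nu\Delta t}{2}\|Au_{n,\beta}\|^2$, which accounts for the $\frac{1}{2\nu}\Delta t\|f\|_\infty^2$ term in \eqref{eq:H1-ineq1}.

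The main obstacle is the nonlinear term. In 2D we bound it by Hölder with $L^4\times L^4\times L^2$, Ladyzhenskaya \eqref{eq:Lady-ineq} applied to $u_{n,\beta}$ and to $\nabla u_{n,\beta}$, and \eqref{eq:equi-D2-ineq}, $\|D^2u\|\le C_\Omega\|\Delta u\|$ (with $\|\Delta u_{n,\beta}\|$ comparable to $\|Au_{n,\beta}\|$). This yields
\[
|b(u_{n,\beta},u_{n,\beta},Au_{n,\beta})|\le c\,C_\Omega^{1/2}\|u_{n,\beta}\|^{1/2}\|\nabla u_{n,\beta}\|\,\|Au_{n,\beta}\|^{3/2}.
\]
Young's inequality with exponents $4$ and $4/3$ then absorbs $\|Au\|^{3/2}$ into the remaining $\frac{\nu}{2}\|Au\|^2$, leaving a term of order $\frac{27 C_\Omega^2}{\nu^3}\|u_{n,\beta}\|^2\|\nabla u_{n,\beta}\|^4$; the factor $27$ comes from the $(3/4)^3$-type Young constant.

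Next, $\|u_{n,\beta}\|\le \sum_\ell\beta_\ell\|u_{n-1+\ell}\|\le K_2$ by \eqref{eq:u-boundL2-K2}. For the other factor we write $\|\nabla u_{n,\beta}\|^4=\|\nabla u_{n,\beta}\|^2\cdot\|\nabla u_{n,\beta}\|^2$ and bound one copy by Cauchy--Schwarz,
\[
\|\nabla u_{n,\beta}\|^2\le c(\theta)\big(\|\nabla u_{n+1}\|^2+\|\nabla u_n\|^2+\|\nabla u_{n-1}\|^2\big)\le \hat c(\theta)(A_{n+1}+A_n),
\]
using the $G$-norm weights $\frac{1+\theta}{4},\frac{1-\theta}{4}$. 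Tracking $\beta_\ell$ should produce the factor $(2-\theta^2)$ in $\kappa_1$. This gives \eqref{eq:H1-ineq1}. Some care is needed when $n$ is small, because \eqref{eq:u-boundL2-K2} only holds for $N\ge2$; the $n=1$ case uses $\|u_0\|,\|u_1\|\le$ a quantity absorbed into $K_2$ through $K_1$.

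For \eqref{eq:H1-ineq2} we avoid the full $\nu\|Au\|^2$ machinery and instead expand directly. We write $A_{n+1}\le A_n+\dots$, or more crudely isolate $\nabla u_{n+1}$ from the scheme: the coefficient of $\|\nabla u_{n+1}\|^2$ in $A_{n+1}$ is $\frac{1+\theta}{4}$, and $u_{n+1}$ enters $u_{n,\beta}$ with weight $\beta_2=\frac{(2-\theta)(1+\theta)}{4}$. Keeping the $\nu\Delta t\|Au_{n,\beta}\|^2/2$ term and bounding the nonlinear term so that the $\|\nabla u_{n+1}\|$ pieces are absorbed on the left produces a constant of the form $3+\frac{8(2-\theta^2)K_2^2}{\nu^2(2-\theta)^2(1+\theta)}$. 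Concretely, we test with $-\Delta t\,\Delta u_{n+1}$ instead. We would bound $\|\nabla u_{n,\beta}\|^2\le 3\sum\beta_\ell^2\|\nabla u\|^2$, and we need the time step restriction \eqref{eq:dt-limit-1} so that $\nu\lambda_1\Delta t$ is bounded, which turns $\Delta t$ times norms into $O(1)$ constants. This step is the most delicate algebra, and we would first try the $\Delta u_{n+1}$ test together with \eqref{eq:identity-1}.

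Finally, for $(n-2)\Delta t>T_\ast$, Corollary~\ref{coro:L2-bound-no-init} gives $\|u_{n-1}\|^2,\|u_n\|^2,\|u_{n+1}\|^2\le 2\rho_0$. Hence $\|u_{n,\beta}\|^2\le 2\rho_0$ replaces $K_2^2$ everywhere. Repeating the same estimates verbatim yields \eqref{eq:H1-ineq1-uniform} and \eqref{eq:H1-ineq2-uniform}, with $\kappa_3=2\kappa_1 K_2^{-2}\rho_0$ and $\kappa_4$ as stated.
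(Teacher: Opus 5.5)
Your treatment of \eqref{eq:H1-ineq1} follows the paper's argument. The exact factor $2-\theta^2$ comes from splitting $\beta_1=(\tfrac12-\beta_2)+(\tfrac12-\beta_0)$ and applying weighted Cauchy--Schwarz with the $G(\theta)$ weights, which gives $\|\nabla u_{n,\beta}\|^2\le(2-\theta^2)(A_{n+1}+A_n)$. Your passage to \eqref{eq:H1-ineq1-uniform} and \eqref{eq:H1-ineq2-uniform}, replacing $K_2^2$ by $2\rho_0$, is also what the paper does.

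The gap is \eqref{eq:H1-ineq2}. You do not derive it; you only guess the form of $\kappa_2$, and neither route you sketch can bound $A_{n+1}$ linearly in $A_n$.

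If you keep $\frac{\nu\Delta t}{2}\|Au_{n,\beta}\|^2$ from the first test and absorb the trilinear term, what remains is $\|u_{n,\beta}\|^2\|\nabla u_{n,\beta}\|^4$, which is quadratic in $A_{n+1}+A_n$. That is just \eqref{eq:H1-ineq1} again, and \eqref{eq:H1-ineq2} is needed precisely because \eqref{eq:H1-ineq1} still has $A_{n+1}$ on the right.

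If instead you test with $\Delta t\,Au_{n+1}$ (or $-\Delta t\,\Delta u_{n+1}$) and use \eqref{eq:identity-1}, the viscous term becomes $\nu\Delta t\,(Au_{n,\beta},Au_{n+1})$. Its cross terms leave $\|Au_n\|^2$ and $\|Au_{n-1}\|^2$ on the right-hand side, just as in \eqref{eq:identity-2}. The term $b(u_{n,\beta},u_{n,\beta},Au_{n+1})$ also needs $\|Au_{n,\beta}\|$. None of these is controlled by $A_n$. The time-step restriction cannot help: Poincar\'e gives $\|Au\|^2\ge\lambda_1\|\nabla u\|^2$, the wrong direction, and without a spatial discretization there is no inverse estimate.

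The missing idea is to test \eqref{eq:DLN-NSE-dual} with the discrete time derivative $\Delta t\,D_n$, where $D_n:=\sum_{\ell}\alpha_\ell u_{n-1+\ell}$.
By \eqref{eq:G-stab-eq} applied to $\{\nabla u_n\}$, the viscous term becomes $\nu\Delta t\big(A_{n+1}-A_n+\|\sum_\ell a_\ell\nabla u_{n-1+\ell}\|^2\big)$, so only $H^1$ quantities occur. The forcing is absorbed by the new term $\|D_n\|^2$ at the cost of $\frac{(\Delta t)^2}{2}\|f\|_\infty^2$.

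For the trilinear term, write $D_n=\frac{\alpha_2}{\beta_2}u_{n,\beta}+w_n$ with $w_n=(\alpha_1-\frac{\alpha_2\beta_1}{\beta_2})u_n+(\alpha_0-\frac{\alpha_2\beta_0}{\beta_2})u_{n-1}$. By \eqref{eq:b-skew-symm} the $u_{n,\beta}$ part drops out, which eliminates $u_{n+1}$. Hence $-b(u_{n,\beta},u_{n,\beta},D_n)=b(u_{n,\beta},w_n,u_{n,\beta})\le 2^{-1/2}\|u_{n,\beta}\|\,\|\nabla u_{n,\beta}\|\,\|\nabla w_n\|$.

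Now apply Young's inequality together with $\|\nabla u_{n,\beta}\|^2\le(2-\theta^2)(A_{n+1}+A_n)$, $\|\nabla w_n\|^2\le\frac{1+\theta}{\beta_2^2}A_n$ and $\|u_{n,\beta}\|\le K_2$. This bounds the trilinear term by $\frac{\nu}{2}(A_{n+1}+A_n)+\frac{4(2-\theta^2)K_2^2}{\nu(2-\theta)^2(1+\theta)}A_n$. Therefore $\frac{\nu\Delta t}{2}A_{n+1}\le\frac{3\nu\Delta t}{2}A_n+\frac{4(2-\theta^2)K_2^2\Delta t}{\nu(2-\theta)^2(1+\theta)}A_n+\frac{(\Delta t)^2}{2}\|f\|_\infty^2$. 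Dividing by $\frac{\nu\Delta t}{2}$ gives exactly $\kappa_2$.

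Since $\Delta t$ cancels, \eqref{eq:dt-limit-1} enters this step only through the availability of $K_2$.
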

	\begin{proof}
		We apply the operator in \eqref{eq:DLN-NSE-dual} on $- \Delta u_{n,\beta}$, and utilize $G$-stability identity in \eqref{eq:G-stab-eq}, Cauchy-Schwarz inequality, Young's inequality yield 
		\begin{align}
			\label{eq:GstabH1-eq1}
			& 
			A_{n+1} - A_{n}
			+ 
			\Big\|\sum_{\ell =0}^{2}{a_{\ell }} \, \nabla u_{n-1+\ell }%
			\Big\|^{2}
			+ \frac{\nu}{2} \Delta t \Big\| \Delta u_{n,\beta} \Big\|^2
			\\
			& 
			\leq
			\frac{1}{2\nu} \Delta t \|f(t_{n,\beta })\|^2_{L^{2}(\Omega)} 
			+ \Delta t b (u_{n,\beta}, u_{n,\beta}, \Delta u_{n,\beta}).
			\notag
		\end{align}
\begin{confidential}
	\color{darkblue}
	\begin{align}
		& 
		\begin{Vmatrix}
			\nabla {u_{n+1}} \\
			\nabla {u_{n}}
		\end{Vmatrix}_{G(\theta )}^{2} 
		- 
		\begin{Vmatrix}
			\nabla  {u_{n}} \\
			\nabla {u_{n-1}}
		\end{Vmatrix}_{G(\theta )}^{2}
		+ \Big\|\sum_{\ell =0}^{2}{a_{\ell }} \, \nabla u_{n-1+\ell }%
		\Big\|^{2}
		- \Delta t b (u_{n,\beta}, u_{n,\beta}, \Delta u_{n,\beta})
		+ \nu \Delta t \Big\| \Delta u_{n,\beta} \Big\|^2 \notag \\
		& 
		= - \Delta t \big( f(t_{n,\beta }) , \Delta u_{n,\beta}  \big)			
		\leq
		\Delta t \|f(t_{n,\beta })\| \|  \Delta u_{n,\beta} \|			
		\leq
		\frac{1}{2\nu} \Delta t \|f(t_{n,\beta })\|^2
		+ \frac{\nu}{2} \Delta t \| \Delta u_{n,\beta} \|^2, \notag
	\end{align}
	\normalcolor
\end{confidential}
		By H\"older's inequality, Ladyzhenskaya inequality in \eqref{eq:Lady-ineq} and the $H^2$-bound for the Laplacian in \eqref{eq:equi-D2-ineq}
		\begin{align}
			& 
			b (u_{n,\beta}, u_{n,\beta}, \Delta u_{n,\beta}) 
			\label{eq:b-eq1} \\
			\leq& 
			\| u_{n,\beta} \|_{L^4} \| \nabla u_{n,\beta} \|_{L^4}  \| \Delta u_{n,\beta} \| \notag \\
			\leq&
			2^{-1/4} \| u_{n,\beta} \|^{1/2} \| \nabla u_{n,\beta} \|^{1/2} 
			2^{-1/4} \| \nabla u_{n,\beta} \|^{1/2}  \| D^2 u_{n,\beta} \|^{1/2}  
			\| \Delta u_{n,\beta} \| \notag \\
			\leq& 
			2^{-1/2} C_{\Omega}^{1/2} \| u_{n,\beta} \|^{1/2} \| \nabla u_{n,\beta} \| \| \Delta u_{n,\beta} \|^{3/2}  \notag \\
			\leq&
			\frac{27 C_{\Omega}^2}{16 \nu^3} \| u_{n,\beta} \|^{2} \| \nabla u_{n,\beta} \|^4
			+ \frac{\nu}{4} \| \Delta u_{n,\beta} \|^{2}. \notag
		\end{align}
\begin{confidential}
	\color{darkblue}
	in 2D 
	\begin{align}
		\label{eq:Ladyz1}
		& 
		b (u_{n,\beta}, u_{n,\beta}, \Delta u_{n,\beta})
		\leq
		\| u_{n,\beta} \|_{L^4} \| \nabla u_{n,\beta} \|_{L^4}  \| \Delta u_{n,\beta} \|_{L^2}
		\\
		& 
		=
		\Big(\frac{C_{\Omega}}{2} \Big)^{1/2} \| u_{n,\beta} \|^{1/2}_{L^2} \| \nabla u_{n,\beta} \|_{L^2}    \| \Delta u_{n,\beta} \|^{3/2}_{L^2}  
		\notag
		\\
		& 
		=
		\underbrace{\Big(\frac{C_{\Omega}}{2} \Big)^{1/2} \frac{1}{\sqrt{a}}\| u_{n,\beta} \|^{1/2}_{L^2} \| \nabla u_{n,\beta} \|_{L^2}}_{4} 
		\underbrace{a^{1/2}  \| \Delta u_{n,\beta} \|^{3/2}_{L^2} }_{4/3}
		\notag
		\\
		& 
		\leq
		\frac{1}{4} \bigg( \Big(\frac{C_{\Omega}}{2} \Big)^{1/2} \frac{1}{\sqrt{a}}\| u_{n,\beta} \|^{1/2}_{L^2} \| \nabla u_{n,\beta} \|_{L^2} \bigg)^4
		+ \frac{3}{4} \bigg( a^{1/2}  \| \Delta u_{n,\beta} \|^{3/2}_{L^2} \bigg)^{4/3}
		\notag
		\\
		& 
		=
		\frac{1}{4} \Big(\frac{C_{\Omega}}{2} \Big)^{2} \frac{1}{a^2}\| u_{n,\beta} \|^{2}_{L^2} \| \nabla u_{n,\beta} \|^4_{L^2}
		+ \frac{3}{4} a^{2/3}  \| \Delta u_{n,\beta} \|^{2}_{L^2}.
		\notag
	\end{align}
	Now taking 
	$$ 
	a = \Big( \frac{\nu}{3}\Big)^{3/2}
	$$
	one gets that
	\begin{align*}
		& 
		b (u_{n,\beta}, u_{n,\beta}, \Delta u_{n,\beta})
		\leq
		\frac{27 C_{\Omega}^2}{16 \nu^3} \| u_{n,\beta} \|^{2}_{L^2} \| \nabla u_{n,\beta} \|^4_{L^2}
		+ \frac{\nu}{4}  \| \Delta u_{n,\beta} \|^{2}_{L^2}
		,
	\end{align*}
	so we have 
	\begin{align*}
		& 
		\begin{Vmatrix}
			\nabla {u_{n+1}} \\
			\nabla {u_{n}}
		\end{Vmatrix}
		_{G(\theta )}^{2} 
		- 
		\begin{Vmatrix}
			\nabla  {u_{n}} \\
			\nabla {u_{n-1}}
		\end{Vmatrix}
		_{G(\theta )}^{2}
		+ \Big\|\sum_{\ell =0}^{2}{a_{\ell }} \, \nabla u_{n-1+\ell }%
		\Big\|^{2}
		+ \frac{\nu}{4} \Delta t \Big\| \Delta u_{n,\beta} \Big\|^2 \notag
		\\
		& 
		\leq
		\frac{1}{2\nu} \Delta t 
		\|f\|^2_\infty
		+ \frac{27 C_{\Omega}^2}{16\nu^3} \Delta t \| u_{n,\beta} \|^{2}_{L^2} \| \nabla u_{n,\beta} \|^4. 
	\end{align*}
	\normalcolor
\end{confidential}
		We combine \eqref{eq:GstabH1-eq1} and \eqref{eq:b-eq1} to obtain 
		\begin{align}
			& 
			A_{n+1} - A_n
			\leq
			\frac{1}{2\nu} \Delta t \|f\|^2_{\infty}
			+ \frac{27 C_{\Omega}^2}{16 \nu^3} \Delta t \| u_{n,\beta} \|^{2}_{L^2} \| \nabla u_{n,\beta} \|^4.
			\label{eq:An-ineq-1}
		\end{align}
		Now, using the Cauchy-Schwarz inequality, we have
		\begin{align}
			\|\nabla u_{n,\beta}\|^2 =& 
			\Big\| \beta_2 \nabla u_{n+1} + \Big( \frac{1}{2} -\beta_2 \Big) \nabla u_{n} + \Big(\frac{1}{2} - \beta_0\Big) \nabla u_{n} + \beta_0 \nabla u_{n-1} \Big\|^2 
			\label{eq:u-n-beta-grad-L2} \\
			\leq& 
			2 \Big\| \beta_2 \nabla u_{n+1} + \Big( \frac{1}{2} -\beta_2 \Big) \nabla u_{n}\Big\|^2
			+ 2 \Big\| \Big( \frac{1}{2} - \beta_0\Big) \nabla u_{n} + \beta_0 \nabla u_{n-1} \Big\|^2  \notag \\
			\leq& 
			\underbrace{2 \Big[ \beta_2^2 \frac{2^2}{1+\theta}
			+ \Big( \frac{1}{2} -\beta_2 \Big)^2 \frac{2^2}{1-\theta} \Big]}_{2 - \theta^2} A_{n+1}  
			+ \underbrace{2 \Big[ \Big( \frac{1}{2} - \beta_0\Big)^2 \frac{2^2}{1+\theta} 
			+ \beta_0^2 \frac{2^2}{1 - \theta} \Big]}_{2 - \theta^2} A_n. \notag 
		\end{align}
\begin{confidential}
	\color{mygrey}
	\begin{align*}
		\|u_{n,\beta}\|^2
		& 
		= \|\sum_{\ell=0}^2 \beta_\ell u_{n-1+\ell} \|^2
		= \| \beta_2 u_{n+1} + \beta_1 u_{n} + \beta_0 u_{n-1} \|^2  
		\\
		& 
		= 
		\| \beta_2 u_{n+1} + (1 - \beta_2 - \beta_0) u_{n} + \beta_0 u_{n-1} \|^2  
		= 
		\Big\| \beta_2 u_{n+1} + \Big( \frac{1}{2} -\beta_2 \Big) u_{n} + \Big( \frac{1}{2} - \beta_0\Big) u_{n} + \beta_0 u_{n-1} \Big\|^2  
		\\
		& 
		\leq 
		2 \Big\| \beta_2 u_{n+1} + \Big( \frac{1}{2} -\beta_2 \Big) u_{n}\Big\|^2
		+ 
		2 \Big\| \Big( \frac{1}{2} - \beta_0\Big) u_{n} + \beta_0 u_{n-1} \Big\|^2 
		\\
		& 
		=
		2 \Big\| \beta_2 \frac{2}{\sqrt{1+\theta}} \frac{\sqrt{1+\theta}}{2} u_{n+1} 
		+ \Big( \frac{1}{2} -\beta_2 \Big) \frac{2}{\sqrt{1-\theta}}  \frac{\sqrt{1-\theta}}{2}  u_{n}\Big\|^2
		\\
		& \qquad
		+ 
		2 \Big\| \Big( \frac{1}{2} - \beta_0\Big) \frac{2}{\sqrt{1+\theta}} \frac{\sqrt{1+\theta}}{2}  u_{n} 
		+ \beta_0 \frac{2}{\sqrt{1 - \theta}} \frac{\sqrt{1- \theta}}{2}  u_{n-1} \Big\|^2 
		\\
		\tag{{\color{darkviolet}use C-S: $(\sum a_i b_i)^2 \leq (\sum a_i^2) (\sum b_i^2)$}}
		\\
		& 
		\leq
		2 \Big[ \beta_2^2 \frac{2^2}{1+\theta}
		+ \Big( \frac{1}{2} -\beta_2 \Big)^2 \frac{2^2}{1-\theta} \Big]
		\cdot	
		\big\| [ u_{n+1} , u_n ] \big\|^2_{G(\theta)}		
		\\
		& \qquad
		+ 
		2 \Big[ \Big( \frac{1}{2} - \beta_0\Big)^2 \frac{2^2}{1+\theta} 
		+ \beta_0^2 \frac{2^2}{1 - \theta} \Big]
		\cdot
		\| [  u_{n} , u_{n-1} ]\|_{G(\theta)}^2 
		\\
		& 
		=
		2 \Big[ \beta_2^2 \frac{2^2}{1+\theta}
		+ \Big( \frac{1}{2} -\beta_2 \Big)^2 \frac{2^2}{1-\theta} \Big]
		\cdot	
		\big\| [ u_{n+1} , u_n ] \big\|^2_{G(\theta)}		
		\\
		& \qquad
		+ 
		2 \Big[ \Big( \frac{1}{2} - \beta_0\Big)^2 \frac{2^2}{1+\theta} 
		+ \beta_0^2 \frac{2^2}{1 - \theta} \Big]
		\cdot
		\| [  u_{n} , u_{n-1} ] \|_{G(\theta)}^2 
		,
	\end{align*}
	\begin{align*}
		&
		2 \Big[ \beta_2^2 \frac{2^2}{1+\theta}
		+ \Big( \frac{1}{2} -\beta_2 \Big)^2 \frac{2^2}{1-\theta} \Big] \\
		& = 
		2 \Big[ \Big( \frac{1}{4} (2  + \theta - {\theta }^{2} ) \Big)^2 \frac{2^2}{1+\theta}
		+ \Big( \frac{1}{2} - \frac{1}{4} (2  + \theta - {\theta }^{2} ) \Big)^2 \frac{2^2}{1-\theta} \Big]\\
		& = 
		2 \Big[ \frac{1}{16} (2  + \theta - {\theta }^{2} ) ^2 \frac{2^2}{1+\theta}
		+ \frac{1}{16} \theta^2 (1 - {\theta })^{2}  \frac{2^2}{1-\theta} \Big]
		\\
		& = 
		\frac{1}{2}
		\Big[  (2  + \theta - {\theta }^{2} ) ^2 \frac{1}{1+\theta}
		+ \theta^2 (1 - {\theta })^{2}  \frac{1}{1-\theta} \Big] \\
		& = 
		\frac{1}{2} \frac{1}{(1+\theta)(1-\theta)}
		\Big[ (1 - {\theta }) (2  + \theta - {\theta }^{2} ) ^2 
		+ \theta^2 (1 + {\theta }) (1 - {\theta })^{2}  \Big]\\
		& = 
		\frac{1}{2} \frac{1}{(1+\theta)(1-\theta)}
		\Big[ 4 (1 - {\theta }) + 4\theta(1-\theta)^2 
		+ \underbrace{\theta^2(1 - {\theta })^{3} 
		+ \theta^2 (1 + {\theta }) (1 - {\theta })^{2} }_{ = \theta^2(1-\theta)^2 (1-\theta + 1+ \theta) = 2 \theta^2(1-\theta)^2 } \Big] \\
		& = 
		\frac{1}{2} \frac{1}{(1+\theta)(1-\theta)}
		\Big[ 4 (1 - {\theta }) + 4\theta(1-\theta)^2 +  2 \theta^2(1-\theta)^2 \Big] \\
		& = 
		\frac{1}{(1+\theta)}\Big[ 2 + 2\theta(1-\theta)+ \theta^2(1-\theta)  
		\Big]
		= 
		\frac{1}{(1+\theta)}\Big[2 + 2 \theta - 2\theta^2+ \theta^2 - \theta^3
		\Big]\\
		& = 
		\frac{1}{(1+\theta)}\big(  2  + 2\theta - \theta^2 - \theta^3 \big)
		= 
		\frac{1}{(1+\theta)}
		\big(  2(1  + \theta) - \theta^2(1 + \theta) \big)
		= 2 - \theta^2, \\
		\\
		&2 \Big[ \Big( \frac{1}{2} - \beta_0\Big)^2 \frac{2^2}{1+\theta} 
		+ \beta_0^2 \frac{2^2}{1 - \theta} \Big]
		\\
		& = 
		8 \Big[ \Big( \frac{1}{2} -  \frac{1}{4}  ( 2  - \theta - {\theta }^{2} ) \Big)^2 \frac{1}{1+\theta} 
		+ \frac{1}{4^2}  ( 2  - \theta - {\theta }^{2} )^2
		\frac{1}{1 - \theta} \Big]
		\\
		& = 
		8 \Big[   \frac{1}{16}  \theta^2 (1 + {\theta })^2 \frac{1}{1+\theta} 
		+ \frac{1}{16}  ( 2  - \theta - {\theta }^{2} )^2
		\frac{1}{1 - \theta} \Big]\\
		& 
		= 
		\frac{1}{2}   \frac{1}{(1+\theta)(1-\theta)}  \Big[    \theta^2 (1 + {\theta })^2 (1-\theta)
		+ ( 2  - \theta - {\theta }^{2} )^2 (1 + \theta) \Big]\\
		& 
		= 
		\frac{1}{2}   \frac{1}{(1+\theta)(1-\theta)}  \Big[    \theta^2 (1 + {\theta })^2 (1-\theta)
		+ \Big( 4  - 4\theta(1+\theta)  + \theta^{2} (1+\theta)^2 \Big) (1 + \theta) \Big]\\
		& 
		= 
		\frac{1}{2}   \frac{1}{(1+\theta)(1-\theta)}  \Big[    \theta^2 (1 + {\theta })^2 (1-\theta)
		+ 4(1+\theta)  - 4\theta(1+\theta)^2  + \theta^{2} (1+\theta)^3 \Big]
		\\
		& 
		= 
		\frac{1}{2}   \frac{1}{(1+\theta)(1-\theta)}  \Big[   
		4(1+\theta)  - 4\theta(1+\theta)^2  + \underbrace{ \theta^{2} (1+\theta)^3  +  \theta^2 (1 + {\theta })^2 (1-\theta) }_{= \theta^{2} (1+\theta)^2 (1+\theta + 1 - \theta) = 2 \theta^{2} (1+\theta)^2} \Big]
		\\
		& 
		= 
		\frac{1}{2}   \frac{1}{(1+\theta)(1-\theta)}  \Big[   
		4(1+\theta)  - 4\theta(1+\theta)^2  + 2 \theta^{2} (1+\theta)^2  \Big]
		= 
		\frac{1}{(1-\theta)}  \Big[   
		2  - 2\theta(1+\theta)  + \theta^{2} (1+\theta) \Big] \\
		& 
		= 
		\frac{1}{(1-\theta)} \Big[ 2 - 2\theta - 2\theta^2 + \theta^{2} + \theta^3 \Big]
		= \frac{1}{(1-\theta)} \Big[ 2 - 2\theta - \theta^2  + \theta^3 \Big]
		= 
		\frac{1}{(1-\theta)} \Big[ 2(1 - \theta) - \theta^2( 1 - \theta) \Big]
		\\
		& = 2 - \theta^2.	
	\end{align*}
	\normalcolor
\end{confidential}
		By \eqref{eq:u-boundL2-K2} in Theorem \ref{thm:stab-L2}, \eqref{eq:An-ineq-1}, \eqref{eq:u-n-beta-grad-L2}, 
		\begin{align*}
			& 
			A_{n+1} - A_n \leq
			\frac{\Delta t}{2\nu} \|f\|^2_{\infty}
			+ \frac{27 C_{\Omega}^2 (2 - \theta^2) K_2^2}{16 \nu^3} \Delta t (A_{n+1} + A_n) \| \nabla u_{n,\beta} \|^2.
		\end{align*}
		which results in \eqref{eq:H1-ineq1} by using the notation in \eqref{eq:kappa-1}.
		Similarly, by \eqref{eq:stabilityL2-0} in Corollary \ref{coro:L2-bound-no-init}, 
		we replace $K_2^2$ by $2 \rho_0$ to have \eqref{eq:H1-ineq1-uniform}. 

		Then we apply \eqref{eq:DLN-NSE-dual} on $\Delta t ( \alpha _{2}{u_{n+1}} + {\alpha _{1}}{u_{n}} + {\alpha _{0}}{u_{n-1}} )$, 
		use $G$-stability identity in \eqref{eq:G-stab-eq}, Cauchy-Schwarz inequality, and discard the numerical dissipation term to obtain
\begin{confidential}
	\color{darkgreen}
	\begin{align}  
		& 
		\| {\alpha _{2}}{u_{n+1}}+{\alpha _{1}}{u_{n}}+{\alpha _{0}}{u_{n-1}} \|^2
		+ \nu \Delta t  \big( \nabla {u_{n,\beta }} , \nabla ( \alpha _{2}{u_{n+1}} + {\alpha _{1}}{u_{n}} + {\alpha _{0}}{u_{n-1}} ) \big)
		\notag
		\\
		& 
		+ 
		\Delta t b \big( u_{n,\beta },u_{n,\beta } , \alpha _{2}{u_{n+1}} + {\alpha _{1}}{u_{n}} + {\alpha _{0}}{u_{n-1}} \big)
		= 
		\Delta t \big( f(t_{n,\beta }) , \alpha _{2}{u_{n+1}} + {\alpha _{1}}{u_{n}} + {\alpha _{0}}{u_{n-1}} \big), \notag
	\end{align}
	\begin{align}  
		& 
		\| {\alpha _{2}}{u_{n+1}}+{\alpha _{1}}{u_{n}}+{\alpha _{0}}{u_{n-1}} \|^2
		+ \nu \Delta t  
		\Big(
		\begin{Vmatrix}
			\nabla u_{n+1} \\ \nabla u_{n}
		\end{Vmatrix}_{G(\theta )}^{2}
		-
		\begin{Vmatrix}
			\nabla u_{n} \\ \nabla u_{n-1}
		\end{Vmatrix}_{G(\theta )}^{2}
		+
		\underbrace{\Big\|\sum_{\ell =0}^{2}{a_{\ell }} \, \nabla u_{n-1+\ell } \Big\|^{2}}_{\geq 0}
		\Big)
		\notag
		\\
		& 
		= 
		- 
		\Delta t b \big( u_{n,\beta },u_{n,\beta } , \alpha _{2}{u_{n+1}} + {\alpha _{1}}{u_{n}} + {\alpha _{0}}{u_{n-1}} \big)
		+
		\Delta t \big( f(t_{n,\beta }) , \alpha _{2}{u_{n+1}} + {\alpha _{1}}{u_{n}} + {\alpha _{0}}{u_{n-1}} \big)
		\notag
		\\
		& 
		\tag{use Cauchy-Schwarz}
		\\
		& 
		\leq 
		- 
		\Delta t  b \big( u_{n,\beta },u_{n,\beta } , \alpha _{2}{u_{n+1}} + {\alpha _{1}}{u_{n}} + {\alpha _{0}}{u_{n-1}} \big)
		+
		(\Delta t)^2  \frac{1}{2} \|f(t_{n,\beta })\|^2 
		+ \frac{1}{2} \| \alpha _{2}{u_{n+1}} + {\alpha _{1}}{u_{n}} + \alpha _{0} u_{n-1} \|^2, \notag
	\end{align}
	\normalcolor
\end{confidential}
		\begin{align}  
			& 
			\frac{1}{2} \| {\alpha _{2}}{u_{n+1}}+{\alpha _{1}}{u_{n}}+{\alpha _{0}}{u_{n-1}} \|^2
			+ \nu \Delta t  A_{n+1}
			\label{eq:ineq1}
			\\
			& 
			\leq 
			\nu \Delta t  A_n + \frac{(\Delta t )^2}{2} \|f\|^2_{\infty}
			- \Delta t b \big( u_{n,\beta },u_{n,\beta } , \alpha _{2}{u_{n+1}} + {\alpha _{1}}{u_{n}} + {\alpha _{0}}{u_{n-1}} \big).
			\notag
		\end{align}
		By skew-symmetric property of $b$ in \eqref{eq:b-skew-symm}, Ladyzhenskaya inequality in \eqref{eq:Lady-ineq}, Young's inequality, \eqref{eq:u-n-beta-grad-L2} and \eqref{eq:u-boundL2-K2}  
		\begin{align}
			&- b(u_{n,\beta},u_{n,\beta}, \alpha _{2}{u_{n+1}}+{\alpha _{1}}{u_{n}} + {\alpha _{0}}{u_{n-1}}) 
			\label{eq:b-n-beta-H1} \\
			=& - b \big( u_{n,\beta}, u_{n,\beta}, 
			\frac{\alpha_2}{\beta_2} u_{n,\beta} -  \frac{\alpha_2}{\beta_2} u_{n,\beta} 
			+ \alpha _{2}{u_{n+1}} + {\alpha _{1}}{u_{n}} + {\alpha _{0}}{u_{n-1}}  \big) \notag \\
			=& - b \big( u_{n,\beta}, u_{n,\beta}, - \frac{\alpha_2}{\beta_2}  \beta_1 u_n  -  \frac{\alpha_2}{\beta_2}  \beta_0 u_{n-1} 
			+ {\alpha _{1}}{u_{n}} + {\alpha _{0}}{u_{n-1}} \big) \notag \\
			=& b \big( u_{n,\beta}, \big( \alpha_{1} - \frac{\alpha_2}{\beta_2} \beta_1 \big) u_n  
			+ \big( \alpha _{0} -  \frac{\alpha_2}{\beta_2} \beta_0 \big) u_{n-1}, u_{n,\beta} \big) \notag \\
			\leq&
			2^{-1/2} \| u_{n,\beta} \|  \| \nabla u_{n,\beta} \|
			\Big\| \big( \alpha _{1} -  \frac{\alpha_2}{\beta_2}  \beta_1 \big) \nabla u_n  
			+ \big( \alpha_{0} - \frac{\alpha_2}{\beta_2} \beta_0 \big) \nabla u_{n-1} \Big\| \notag \\
			\leq& \frac{\nu}{2 (2 - \theta^2)} \| \nabla u_{n,\beta} \|^2
			+ \frac{(2 - \theta^2)}{4 \nu} \| u_{n,\beta} \|^2 
			\Big\| \big( \alpha _{1} -  \frac{\alpha_2}{\beta_2}  \beta_1 \big) \nabla u_n  
			+ \big( \alpha_{0} - \frac{\alpha_2}{\beta_2} \beta_0 \big) \nabla u_{n-1} \Big\|^2 \notag \\
			\leq& \frac{\nu}{2} (A_{n+1} + A_{n})
			+ \frac{(2 - \theta^2) K_2^2}{4 \nu \beta_2^2} 
			\Big\| \big( \alpha _{1} \beta_2 - \alpha_2 \beta_1 \big) \nabla u_n + \big( \alpha_{0} \beta_2  - \alpha_2 \beta_0 \big) \nabla u_{n-1} \Big\|^2. \notag 
		\end{align}
\begin{confidential}
	\color{mygrey}
	Now we proceed in a similar way to \eqref{eq:Ladyz1} and bound the last term in the RHS of \eqref{eq:ineq1}:
	\begin{align*}
		& 
		- b (u_{n,\beta}, u_{n,\beta}, \alpha _{2}{u_{n+1}} + {\alpha _{1}}{u_{n}} + {\alpha _{0}}{u_{n-1}}) \\
		& 
		= - b \big( u_{n,\beta} ,  u_{n,\beta} , 
		\frac{\alpha_2}{\beta_2} u_{n,\beta} -  \frac{\alpha_2}{\beta_2} u_{n,\beta} 
		+ \alpha _{2}{u_{n+1}} + {\alpha _{1}}{u_{n}} + {\alpha _{0}}{u_{n-1}}  \big) \notag \\
		& 
		\notag
		= - b \big( u_{n,\beta}, u_{n,\beta}, \frac{\alpha_2}{\beta_2}  u_{n,\beta} 
		-  \frac{\alpha_2}{\beta_2} \big( \cancel{\beta_2 u_{n+1}} + \beta_1 u_n + \beta_0 u_{n-1} )
		+ \cancel{\alpha _{2}{u_{n+1}}} + {\alpha _{1}}{u_{n}} + {\alpha _{0}}{u_{n-1}} \big) \\
		& 
		= - b \big( u_{n,\beta}, u_{n,\beta}, 
		\frac{\alpha_2}{\beta_2}  \xcancel{u_{n,\beta} }
		-  \frac{\alpha_2}{\beta_2} \big( \beta_1 u_n + \beta_0 u_{n-1} )
		+ {\alpha _{1}}{u_{n}} + {\alpha _{0}}{u_{n-1}}  \big)
		\tag{use of skewsymmetry}
		\\
		& 
		= - b \big( u_{n,\beta} ,  u_{n,\beta} , 
		-  \frac{\alpha_2}{\beta_2}  \beta_1 u_n  -  \frac{\alpha_2}{\beta_2}  \beta_0 u_{n-1} 
		+ {\alpha _{1}}{u_{n}} + {\alpha _{0}}{u_{n-1}}  \big)
		\notag
		\\
		& 
		= - b \big( u_{n,\beta} ,  u_{n,\beta} , 
		\big[ \alpha _{1} -  \frac{\alpha_2}{\beta_2}  \beta_1 \big] u_n  
		+ \big[ \alpha _{0} -  \frac{\alpha_2}{\beta_2}  \beta_0 \big] u_{n-1} 
		\big)
		\tag{use skew symmetry}
		\\
		& 
		= b\big( u_{n,\beta}, \big[ \alpha _{1} -  \frac{\alpha_2}{\beta_2}  \beta_1 \big] u_n  
		+ \big[ \alpha _{0} - \frac{\alpha_2}{\beta_2}  \beta_0 \big] u_{n-1},
		u_{n,\beta} \big)
		\tag{use Ladyzhenskaya}
		\\
		& 
		\leq
		2^{-1/2} \| u_{n,\beta} \|  \| \nabla u_{n,\beta} \|
		\Big\| \big[ \alpha _{1} -  \frac{\alpha_2}{\beta_2}  \beta_1 \big] \nabla u_n  
		+ \big[ \alpha _{0} -  \frac{\alpha_2}{\beta_2}  \beta_0 \big] \nabla u_{n-1} \Big\| \notag 
	\end{align*}
		\begin{align}
			& 
			\| (\alpha _{1}\beta_2 -  \alpha_2 \beta_1) \frac{2}{\sqrt{1+\theta}} \frac{\sqrt{1+\theta}}{2} \nabla u_n  
			+ ( \alpha _{0} \beta_2 -  \alpha_2 \beta_0)  \frac{2}{\sqrt{1-\theta}} \frac{\sqrt{1- \theta}}{2}  \nabla u_{n-1} 
			\|^2 
			\notag
			\\
			& 
			\leq
			\Big( (\alpha _{1}\beta_2 -  \alpha_2 \beta_1)^2 \frac{4}{1+\theta} 
			+ ( \alpha _{0} \beta_2 -  \alpha_2 \beta_0)^2  \frac{4}{1-\theta} \Big)
			\Big( \frac{1+\theta}{4} \| \nabla u_n \|^2  +  \frac{1- \theta}{4}  \|\nabla u_{n-1} \|^2 \Big)
			\notag
			\\
			& 
			=
			\Big( (\alpha _{1}\beta_2 -  \alpha_2 \beta_1)^2 \frac{4}{1+\theta} 
			+ ( \alpha _{0} \beta_2 -  \alpha_2 \beta_0)^2  \frac{4}{1-\theta}  \Big) \| [ \nabla u_n , \nabla u_{n-1} ] \|^2_{G(\theta)},
			\notag 
		\end{align}
		\begin{align*}
			\alpha_1\beta_2 - \alpha_2\beta_1 
			& 
			= (-\theta)\frac{1}{4}(2+\theta-\theta^2)
			- \frac{1}{2}(\theta+1)\ \frac{1}{2}\theta^2
			= - \frac{1}{4} \big( 2\theta + \theta^2 - \cancel{\theta^3}
			+ \cancel{\theta^3} + \theta^2 \big)
			\\
			& 
			= - \frac{1}{2} \theta (\theta+1), 
			\\
			\alpha_0\beta_2 - \alpha_2\beta_0 
			& 
			= \frac{1}{2} (\theta - 1) \frac{1}{4} (2+\theta-\theta^2) - \frac{1}{2} (\theta+1) \frac{1}{4}(2-\theta-\theta^2)
			\\
			& 
			= \frac{1}{8} \Big( ( \xcancel{2\theta} + \theta^2  - \cancel{\theta^3} - 2 - \bcancel{\theta} + \theta^2)
			-  (\xcancel{2\theta} - \theta^2 - \cancel{\theta^3} + 2 - \bcancel{\theta} - \theta^2) \Big)
			\\
			& 
			= \frac{1}{8} (4\theta^2 - 4)
			= \frac{1}{2} (\theta - 1)(\theta+1), 
		\end{align*}
		and
		\begin{align*}
			& 
			(\alpha _{1}\beta_2 -  \alpha_2 \beta_1)^2 \frac{4}{1+\theta} 
			+ ( \alpha _{0} \beta_2 -  \alpha_2 \beta_0)^2  \frac{4}{1-\theta} 
			\\
			& 
			= 
			\frac{1}{4} \theta^2 (1+\theta)^{\bcancel 2} \frac{4}{\bcancel{1+\theta}} 
			+ \frac{1}{4} (1 - \theta)^{\cancel{2}} (1 + \theta)^2 \frac{4}{\cancel{1-\theta}} 		
			\\
			& 
			= 
			\theta^2 (1+\theta) + (1 - \theta) (1 + \theta)^2
			= 
			(1+\theta) \big( \cancel\theta^2 + 1 - \cancel \theta^2\big)
			= 1+\theta.
		\end{align*}
	\normalcolor
\end{confidential}
		By similar argument to \eqref{eq:u-n-beta-grad-L2},
		\begin{align}
			&\Big\| \big( \alpha _{1} \beta_2 - \alpha_2 \beta_1 \big) \nabla u_n + \big( \alpha_{0} \beta_2  - \alpha_2 \beta_0 \big) \nabla u_{n-1} \Big\|^2 
			\label{eq:u-n-beta-grad-L2-2} \\
			\leq& \Big[ \underbrace{(\alpha _{1}\beta_2 - \alpha_2 \beta_1)^2 \frac{4}{1+\theta} + ( \alpha _{0} \beta_2 - \alpha_2 \beta_0)^2 \frac{4}{1-\theta}}_{= 1+\theta} \Big] A_{n}.  \notag 
		\end{align}
		By \eqref{eq:b-n-beta-H1} and \eqref{eq:u-n-beta-grad-L2-2}, 
		\eqref{eq:ineq1} becomes 
		\begin{align*}
			&\frac{\nu \Delta t}{2} A_{n+1} 
			\leq
			\frac{3 \nu \Delta t}{2} A_{n}
			+ \frac{4 (2 - \theta^2) K_2^2 \Delta t}{\nu (2 - \theta)^2(1+\theta)} A_n + \frac{(\Delta t )^2}{2} \|f\|^2_{\infty},
		\end{align*}
		which yields \eqref{eq:H1-ineq2}. 
		If $n$ is large enough such that $(n-2) \Delta t > T_{\ast}$, by \eqref{eq:stabilityL2-0} in Corollary \ref{coro:L2-bound-no-init}, we replace $K_2^2$ by $2 \rho_0$ to obtain \eqref{eq:H1-ineq2-uniform}.
\begin{confidential}
	\color{darkblue}
	\begin{align*}
		&\nu \Delta t  A_{n+1} \\ 
		\leq& 
		\nu \Delta t  A_n + \frac{(\Delta t )^2}{2} \|f\|^2_{\infty}
		+ \frac{\nu \Delta t}{2} (A_{n+1} + A_{n})
		+ \frac{(2 - \theta^2) (1 + \theta) K_2^2 \Delta t}{4 \nu \beta_2^2} 
		A_n, \\
		&\frac{\nu \Delta t}{2} A_{n+1} \\
		\leq& \frac{3 \nu \Delta t}{2} A_{n}
		+ \frac{(2 - \theta^2) (1 + \theta) K_2^2 \Delta t}{4 \nu \big( \frac{1}{4}(2 + \theta - \theta^2) \big)^2} A_n + \frac{(\Delta t )^2}{2} \|f\|^2_{\infty} \\
		=& \frac{3 \nu \Delta t}{2} A_{n}
		+ \frac{(2 - \theta^2) (1 + \theta) K_2^2 \Delta t}{\frac{1}{4} \nu (2 - \theta)^2(1+\theta)^2} A_n + \frac{(\Delta t )^2}{2} \|f\|^2_{\infty}\\
		=& \frac{3 \nu \Delta t}{2} A_{n}
		+ \frac{4 (2 - \theta^2) K_2^2 \Delta t}{\nu (2 - \theta)^2(1+\theta)} A_n + \frac{(\Delta t )^2}{2} \|f\|^2_{\infty}. 
	\end{align*}
	\begin{align*}
		A_{n+1} \leq
		3 A_n + \frac{8 (2 - \theta^2) K_2^2 }{\nu^2 (2 - \theta)^2(1+\theta)} A_n + \frac{\Delta t}{\nu} \|f\|^2_{\infty}
		= \Big( 3 + \frac{8 (2 - \theta^2) K_2^2 }{\nu^2 (2 - \theta)^2(1+\theta)} \Big) A_n + \frac{\Delta t}{\nu} \|f\|^2_{\infty}.
	\end{align*}
	\normalcolor
\end{confidential}
	\end{proof}

	\begin{lemma}
		\label{lemma:Gron-ineq}
		Given $k > 0$, an integer $n_{\ast} > 0$, and positive sequences $\{ \xi_n \}$, $\{ \eta_n \}$, $\{ \zeta_n \}$ such that 
		\begin{align*}
			\xi_n \leq \xi_{n-1} (1 + k \eta_{n-1}) + k \zeta_n, \qquad n = 1, 2, \cdots, n_{\ast}, 
		\end{align*}
		we have for any $n \in \{ 2, \cdots, n_{\ast} \}$, 
		\begin{align*}
			\xi_n \leq \xi_0 \exp \Big( \sum_{i=0}^{n-1} k \eta_i \Big) + \sum_{i=1}^{n} k \zeta_i \exp \Big( \sum_{j=i}^{n-1} k \eta_j \Big) + k \zeta_n. 
		\end{align*}
	\end{lemma}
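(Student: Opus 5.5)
We unroll the recursion by induction on $n$, using only the elementary bound $1+x\le e^{x}$ for $x\ge 0$ together with the positivity of all sequences involved.

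\textbf{Step 1: the exact unrolled form.} We first show by induction that for $1\le n\le n_\ast$
\begin{align*}
\xi_n \le \xi_0 \prod_{i=0}^{n-1}(1+k\eta_i) + \sum_{i=1}^{n} k\zeta_i \prod_{j=i}^{n-1}(1+k\eta_j),
\end{align*}
with the convention that an empty product equals $1$. The case $n=1$ is the hypothesis itself. For the induction step, assume the bound at level $n-1$ and multiply it by $1+k\eta_{n-1}>0$; this factor is positive because $k>0$ and $\eta_{n-1}>0$, so the inequality is preserved. Inserting the result into $\xi_n\le \xi_{n-1}(1+k\eta_{n-1})+k\zeta_n$ reproduces the same expression at level $n$. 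The extra term $k\zeta_n$ is exactly the $i=n$ summand with empty product.

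\textbf{Step 2: pass to exponentials.} Apply $1+k\eta_j\le \exp(k\eta_j)$ to each factor. This turns the products into $\exp(\sum_{i=0}^{n-1}k\eta_i)$ and $\exp(\sum_{j=i}^{n-1}k\eta_j)$ respectively. Since every coefficient $\xi_0$ and $k\zeta_i$ is nonnegative, the inequality is preserved.

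\textbf{Step 3: match the stated form.} This already gives the stated bound without the final $+k\zeta_n$, so adding the nonnegative term $k\zeta_n$ yields the claim for all $n\in\{2,\dots,n_\ast\}$. The claim even holds for $n=1$.

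\textbf{Main obstacle.} The only delicate point is bookkeeping: the empty-product convention for the $i=n$ term, and keeping the index ranges consistent with the statement. The apparent double count of $k\zeta_n$ in the statement is harmless, since it only weakens the bound. I would present the induction in Step 1 carefully and treat Steps 2 and 3 as one-line remarks.
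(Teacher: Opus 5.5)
Your proof is correct and is essentially the standard argument: the paper gives no proof of its own but defers to \cite{TW06_SIAMNA}, where the lemma is obtained the same way, by iterating the recursion (induction on $n$) and then using $1+x\le e^{x}$ together with the positivity of $\xi_0$, $\eta_i$, $\zeta_i$. Your unrolled bound already contains the $i=n$ term $k\zeta_n$, so, as you note, the extra $+k\zeta_n$ in the stated inequality is redundant and only weakens the bound.
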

	\begin{proof}
		See \cite[pp.35-36]{TW06_SIAMNA}.
	\end{proof}

	We use Lemma \ref{lemma:Gron-ineq} to prove the boundedness of $\| \nabla u_n \|$ on finite time interval $[0,T]$. 
	\begin{theorem}
		\label{thm:H1-bound-finite}
		Under the time step condition \eqref{eq:dt-limit-1}, for $n = 2, 3, \cdots, N: = \lfloor T/ \Delta t \rfloor$, we have 
		\begin{align}
			A_{n} \leq& K_{3} \Big( \| u_1 \|, \| u_0 \|, \| \nabla u_1 \|, \| \nabla u_0 \|, \|f\|_{\infty}, (n-1) \Delta t \Big), 
			\label{eq:An-bound-K3}
		\end{align}
		where $K_3$ is the following function increaing in all of its arguments
		\begin{align*}
			K_3 = &\bigg\{ \!
			\begin{Vmatrix}
				\nabla u_{1} \\ \nabla u_0
			\end{Vmatrix}_{G(\theta)}^2	\!+ \! 
			\frac{\kappa_1 C_{\Delta t} \|f\|^2_{\infty}}{\nu^2} 
			\Big( 2
			\begin{Vmatrix}
				u_{1} \\ u_0
			\end{Vmatrix}_{G(\theta)}^2 \!+ \frac{(n-1) \Delta t}{\nu} \|f\|^2_{\infty} \Big)
			\!+\! \frac{(n-1) \Delta t}{2 \nu} \|f\|^2_{\infty} \!
			\bigg\}   \\
			&\times \exp \bigg( \frac{\kappa_1 (\kappa_2 +1)}{\nu} \Big( 2
			\begin{Vmatrix}
				u_{1} \\ u_0
			\end{Vmatrix}_{G(\theta)}^2	
			+ \frac{(n-1) \Delta t}{\nu} \|f\|^2_{\infty} \Big) \bigg)
		\end{align*}
		and $C_{\Delta t}$ is the upper bound for $\Delta t$ in the restriction \eqref{eq:dt-limit-1}. 
	\end{theorem}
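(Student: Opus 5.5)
The plan is to turn the implicit recurrence \eqref{eq:H1-ineq1} of Lemma \ref{lemma:auxi-ineq} into an explicit one-step recurrence of the form required by Lemma \ref{lemma:Gron-ineq}, with $\xi_n = A_n$ and $k=\Delta t$. The term $\Delta t\,\kappa_1 A_{n+1}\|\nabla u_{n,\beta}\|^2$ on the right of \eqref{eq:H1-ineq1} contains the unknown $A_{n+1}$. We do not absorb it into the left side, since that would require smallness of $\Delta t\,\kappa_1\|\nabla u_{n,\beta}\|^2$, which we do not have. Instead we eliminate $A_{n+1}$ with the crude bound \eqref{eq:H1-ineq2}, $A_{n+1}\le \kappa_2 A_n + \frac{\Delta t}{\nu}\|f\|_\infty^2$. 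This gives
\begin{align*}
A_{n+1} \le A_n\Big(1+\Delta t\,\kappa_1(\kappa_2+1)\|\nabla u_{n,\beta}\|^2\Big) + \Delta t\Big(\frac{1}{2\nu}\|f\|_\infty^2 + \frac{\kappa_1 \Delta t}{\nu}\|f\|_\infty^2\|\nabla u_{n,\beta}\|^2\Big).
\end{align*}
So we set $\eta_{n}=\kappa_1(\kappa_2+1)\|\nabla u_{n,\beta}\|^2$ and $\zeta_{n+1}=\frac{1}{2\nu}\|f\|_\infty^2+\frac{\kappa_1\Delta t}{\nu}\|f\|_\infty^2\|\nabla u_{n,\beta}\|^2$, with the indices shifted so that the recurrence starts from $\xi_1=A_1$.

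Next we apply Lemma \ref{lemma:Gron-ineq} and bound each piece using Theorem \ref{thm:stab-L2}.

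The exponent $\sum \Delta t\,\eta_j$ is controlled by \eqref{eq:stabilityL2-3} with $i=1$:
\[
\Delta t\sum_{n=1}^{N-1}\|\nabla u_{n,\beta}\|^2 \le \frac{1}{\nu}\Big(2\|[u_1;u_0]\|_{G(\theta)}^2+\frac{(N-1)\Delta t}{\nu}\|f\|_\infty^2\Big).
\]
This yields exactly the exponential factor $\exp\big(\frac{\kappa_1(\kappa_2+1)}{\nu}(2\|[u_1;u_0]\|_{G}^2+\frac{(n-1)\Delta t}{\nu}\|f\|_\infty^2)\big)$ in $K_3$.

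For the forcing sum $\sum\Delta t\,\zeta_i\exp(\cdots)$, we bound every partial exponential by the full one, since the exponents are nonnegative. The constant part of $\zeta$ contributes $\frac{(n-1)\Delta t}{2\nu}\|f\|_\infty^2$. In the second part we use $\Delta t< C_{\Delta t}$ from \eqref{eq:dt-limit-1}, together with the same bound on $\Delta t\sum\|\nabla u_{n,\beta}\|^2$, to get $\frac{\kappa_1 C_{\Delta t}}{\nu^2}\|f\|_\infty^2(2\|[u_1;u_0]\|_G^2+\frac{(n-1)\Delta t}{\nu}\|f\|^2_\infty)$. The initial term is $\xi_0\to A_1=\|[\nabla u_1;\nabla u_0]\|_G^2$. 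Factoring out the common exponential then gives the braces in $K_3$.

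The main obstacle is bookkeeping. The extra trailing term $k\zeta_n$ in Lemma \ref{lemma:Gron-ineq} and the index shift ($n$ versus $n-1$, starting at $A_1$) must be absorbed into the stated $K_3$ without changing its form. I would handle this by including that last term in the sum, using $\exp\ge1$, and by checking that the count of terms is $n-1$, so that $(n-1)\Delta t$ appears. Monotonicity of $K_3$ in all its arguments is evident from its explicit form. Finally, since $\kappa_1,\kappa_2$ depend only on $K_2$, and hence on $\|u_0\|,\|u_1\|,\|f\|_\infty$, so does $K_3$.
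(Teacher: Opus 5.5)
Your proposal follows the paper's proof essentially step for step. You substitute \eqref{eq:H1-ineq2} for $A_{n+1}$ on the right of \eqref{eq:H1-ineq1}, bound the extra factor $\Delta t$ by $C_{\Delta t}$, apply Lemma~\ref{lemma:Gron-ineq} with $k=\Delta t$ and $\xi_n=A_{n+1}$ (so $\xi_0=A_1$), and control $\Delta t\sum_i\|\nabla u_{i,\beta}\|^2$ by \eqref{eq:stabilityL2-3} with $i=1$.

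Two bookkeeping points need correcting. First, absorbing the stray $k\zeta_n$ of Lemma~\ref{lemma:Gron-ineq} as you propose would turn $(n-1)\Delta t$ into $n\Delta t$, so it does not reproduce $K_3$. Use instead the sharper bound $\xi_n\le\big(\xi_0+\sum_{i=1}^{n}k\zeta_i\big)\exp\big(\sum_{i=0}^{n-1}k\eta_i\big)$, which the paper uses implicitly and which follows by a one-line induction with $1+k\eta\le e^{k\eta}$. Second, \eqref{eq:stabilityL2-3} actually gives $\frac{(n-1)\Delta t}{\nu\lambda_1}\|f\|_\infty^2$; both your computation and the paper's $K_3$ omit this factor $\lambda_1^{-1}$.
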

	\begin{proof}
		We combine \eqref{eq:H1-ineq1}-\eqref{eq:H1-ineq2} and obtain, for $n = 1, 2, \cdots, N-1$ 
		\begin{align*}
			A_{n+1} - A_{n} 
			\leq& \frac{\Delta t}{2 \nu} \|f \|^2_{\infty} 
			+ \Delta t \kappa_1 \|\nabla u_{n,\beta}\|^2 \Big( \kappa_2 A_n + \frac{\Delta t}{\nu} \|f\|^2_{\infty} \Big)  
			+ \Delta t \kappa_1 A_n \|\nabla u_{n,\beta}\|^2 \\
			=& \kappa_1 (\kappa_2 + 1) \Delta t \|\nabla u_{n,\beta}\|^2 A_n 
			+ \frac{\kappa_1 \Delta t}{\nu} \|f\|^2_{\infty} \Delta t \|\nabla u_{n,\beta}\|^2 
			+ \frac{\Delta t}{2 \nu} \|f \|^2_{\infty} \\
			\leq& \kappa_1 (\kappa_2 + 1) \Delta t \|\nabla u_{n,\beta}\|^2 A_n 
			+ \frac{\kappa_1 C_{\Delta t}}{\nu} \|f\|^2_{\infty} \Delta t \|\nabla u_{n,\beta}\|^2 
			+ \frac{\Delta t}{2 \nu} \|f \|^2_{\infty}.
		\end{align*} 
		We apply the discrete Gr\"onwall in Lemma \ref{lemma:Gron-ineq} to to the above inequality and let 
		\begin{align*}
			&k = \Delta t, \quad \xi_n = A_{n+1}, \quad 
			\eta_{n-1} = \kappa_1 (\kappa_2 + 1) \|\nabla u_{n,\beta}\|^2, \\
			&\zeta_n = \frac{\kappa_1 C_{\Delta t}}{\nu} \|f\|^2_{\infty} \|\nabla u_{n,\beta}\|^2 
			+ \frac{1}{2 \nu} \|f \|^2_{\infty}
		\end{align*}
		in Lemma \ref{lemma:Gron-ineq} to obtain 
\begin{confidential}
	\color{darkblue}
	\begin{align*}
		A_{n+1} \leq& A_1 \exp\Big(\kappa_1 (\kappa_2 + 1)\sum_{i=1}^{n} \Delta t \|\nabla u_{i,\beta}\|^2 \Big) 
		+ \sum_{i=1}^{n} \Delta t \zeta_i \exp\Big( \kappa_1 (\kappa_2 +1)\sum_{j=i}^{n-1} \Delta t \|\nabla u_{j,\beta}\|^2 \Big) \notag \\
		&\qquad \qquad \qquad \qquad \qquad \qquad \qquad \qquad + \Delta t \zeta_n \notag \\
		\leq& \Big( A_1 + \sum_{i=1}^{n} \Delta t \zeta_i \Big)
		\exp\Big( \kappa_1 (\kappa_2 +1)\sum_{i=1}^{n} \Delta t \|\nabla u_{i,\beta}\|^2 \Big). 
	\end{align*}
	\normalcolor
\end{confidential}
		\begin{align}
			A_{n+1} 
			\leq& \Big( A_1 + \sum_{i=1}^{n} \Delta t \zeta_i \Big)
			\exp\Big( \kappa_1 (\kappa_2 +1)\sum_{i=1}^{n} \Delta t \|\nabla u_{i,\beta}\|^2 \Big). 
			\label{eq:An-bound-ineq1} 
		\end{align}
		By \eqref{eq:stabilityL2-3}, we have
		\begin{align}
			\Delta t \sum_{i=1}^{n} \| \nabla u_{i,\beta} \|^2 
			\leq \frac{1}{\nu} \Big( 2
			\begin{Vmatrix}
				u_{1} \\ u_0
			\end{Vmatrix}_{G(\theta)}^2	
			+ \frac{n \Delta t}{\nu} \|f\|^2_{\infty} \Big),
			\label{eq:u-nabla-sum}
		\end{align}
		and 
		\begin{align}
			\sum_{i=1}^{n} \Delta t \zeta_i 
			=& \frac{\kappa_1 C_{\Delta t}}{\nu} \|f\|^2_{\infty} \sum_{i=1}^{n} \Delta t \|\nabla u_{n,\beta} \|^2
			+ \frac{ n\Delta t}{2 \nu} \|f\|^2_{\infty} 
			\label{eq:zeta-sum} \\
			\leq& \frac{\kappa_1 C_{\Delta t}}{\nu^2} \|f\|^2_{\infty} 
			\Big( 2
			\begin{Vmatrix}
				u_{1} \\ u_0
			\end{Vmatrix}_{G(\theta)}^2 + \frac{n \Delta t}{\nu} \|f\|^2_{\infty} \Big)
			+ \frac{n \Delta t}{2 \nu} \|f\|^2_{\infty}.  \notag 
		\end{align}
		We combine \eqref{eq:An-bound-ineq1}-\eqref{eq:zeta-sum} to obtain \eqref{eq:An-bound-K3}.
	\end{proof}

	Then we extend the result to infinite time by the following more general version of discrete Gr\"onwall inequality. 
	\begin{lemma}
		\label{lemma:Gron-uniform}
		Given $k > 0$, positive integers $n_1$, $n_2$, $n_{\ast}$ such that $n_1 < n_{\ast}$, $n_1 + n_2 + 1 \leq n_{\ast}$, and positive sequences $\{ \xi_n \}$, $\{ \eta_n \}$, $\{ \zeta_n \}$ such that 
		\begin{align*}
			\xi_n \leq \xi_{n-1} (1 + k \eta_{n-1}) + k \zeta_n, \qquad n = n_1, \cdots, n_{\ast}, 
		\end{align*}
		and given the bounds 
		\begin{align*}
			\sum_{n=n'}^{n'+n_2} k \eta_n \leq a_1(n_1,n_{\ast}), \quad
			\sum_{n=n'}^{n'+n_2} k \zeta_n \leq a_2(n_1,n_{\ast}), \quad
			\sum_{n=n'}^{n'+n_2} k \xi_n \leq a_3(n_1,n_{\ast}), 
		\end{align*}
		for any $n'$ such that $n_1 \leq n' \leq n_{\ast} - n_2$,
		we have 
		\begin{align}
			\xi_n \leq \Big( \frac{a_3(n_1, n_{\ast})}{k n_2} + a_{2}(n_1,n_{\ast}) \Big) \exp\big( a_1 (n_1,n_{\ast}) \big),
		\end{align}
		for any $n$ such that $n_1 + n_2 + 1 \leq n \leq n_{\ast}$. 
	\end{lemma}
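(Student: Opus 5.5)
This is the discrete uniform Gr\"onwall lemma. I would prove it by iterating the recurrence from an intermediate index $m$ up to $n$ and then averaging over the choice of $m$ to exploit the bound on $\sum k\xi$. Fix $n$ with $n_1+n_2+1\le n\le n_\ast$, and let $m$ range over a window of $n_2$ (or $n_2+1$) consecutive indices just below $n$, namely $n-n_2-1\le m\le n-1$. Every such $m$ satisfies $m\ge n_1$, so the recurrence holds for all indices $j\in\{m+1,\dots,n\}$.

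Unrolling the recurrence from $m$ to $n$ and using $1+x\le e^x$ gives
\begin{align*}
\xi_n \le \xi_m \prod_{j=m}^{n-1}(1+k\eta_j) + \sum_{i=m+1}^{n} k\zeta_i \prod_{j=i}^{n-1}(1+k\eta_j)
\le \Big(\xi_m + \sum_{i=m+1}^{n} k\zeta_i\Big)\exp\Big(\sum_{j=m}^{n-1}k\eta_j\Big).
\end{align*}
All sequences are positive, so each partial sum can be enlarged to the full window sum. The window $\{n-n_2-1,\dots,n-1\}$ contains at most $n_2+1$ consecutive indices starting at $n'=n-n_2-1\ge n_1$, and $n'\le n_\ast-n_2$ holds as well. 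The hypotheses therefore give
\begin{align*}
\sum_{j=m}^{n-1}k\eta_j\le a_1, \qquad \sum_{i=m+1}^{n}k\zeta_i\le a_2 .
\end{align*}
The index $n$ itself in the $\zeta$-sum needs the shifted window starting at $n'=n-n_2$, which is also admissible. Hence $\xi_n\le(\xi_m+a_2)e^{a_1}$ for every $m$ in the window, uniformly in $m$.

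Next I would multiply this inequality by $k$ and sum over $n_2$ choices of $m$, say $m=n-n_2,\dots,n-1$. The left side becomes $k n_2\,\xi_n$. On the right, $\sum_m k\xi_m\le a_3$ because these $m$ form a block of consecutive indices starting at $n'=n-n_2\ge n_1+1$, and the block has at most $n_2+1$ terms. This gives $kn_2\xi_n\le (a_3+kn_2a_2)e^{a_1}$, and dividing by $kn_2$ yields the claim.

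The main obstacle is purely index bookkeeping. One must ensure that every window used, for $\eta$, $\zeta$ and $\xi$, starts at some $n'$ with $n_1\le n'\le n_\ast-n_2$ and has length at most $n_2+1$, so that the hypothesized bounds apply. One must also ensure the recurrence is valid from $m+1$ to $n$. If a window of length exactly $n_2+1$ falls one index short at either end, I would choose $m\in\{n-n_2,\dots,n-1\}$ and note $m+1>n_1$. With $n\ge n_1+n_2+1$ this gives enough room, mirroring the argument in \cite{TW06_SIAMNA}.
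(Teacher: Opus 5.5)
Your proof is correct and follows the same route as the argument the paper relies on; the paper simply cites Tone--Wirosoetisno, who unroll the recurrence from an intermediate index, bound the result by $(\xi_m+a_2)e^{a_1}$, and then average over $n_2$ starting indices using the $a_3$ bound. Your index checks also hold: the windows starting at $n'=n-n_2-1$ and $n'=n-n_2$ are both admissible for every $n_1+n_2+1\le n\le n_\ast$, and the recurrence is valid from $m+1$ to $n$.
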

	\begin{proof}
		See \cite[pp.37-38]{TW06_SIAMNA}.
	\end{proof}
	\ \\ 

	We fix some positive number $r > 5 C_{\Delta t}$ and have 
	$N_r = \lfloor r/ \Delta t \rfloor > 5$ if the time step restriction in \eqref{eq:dt-limit-1} holds. 
	We denote 
	\begin{align}
		&K_4 (\| f\|_{\infty}, T_{\ast}): = \frac{(1+\theta)\rho_0}{ 2 \widehat{C}_h(\theta)} \exp \Big( \frac{\nu \lambda_1 T_\ast}{4 C_{\epsilon}(\theta)} \Big)
		+ \frac{ T_{\ast} + 2 C_{\Delta t}}{\nu} \| f \|_{\infty}^2, 
		\label{eq:K-4} \\
		&K_5 (\| \nabla u_1 \|, \| \nabla u_0 \|, \| f \|_{\infty}, T_{\ast},r) 
		\label{eq:K-5} \\
		&:= \Big( A_1 + \frac{\kappa_1 C_{\Delta t} \|f\|^2_{\infty} K_4}{\nu^2} + \frac{T_{\ast} + 2 C_{\Delta t}}{2 \nu} \|f\|^2_{\infty} \Big) \exp \big(\frac{\kappa_1 (\kappa_2 + 1)K_4}{\nu}\big), \notag \\
		&K_6 (\| \nabla u_1 \|, \| \nabla u_0 \|, \| f \|_{\infty}, T_{\ast})
		\notag \\
		&:= \frac{(2 \beta_2 -1)r}{ 4(4+3\theta) K_5}  \exp \Big( \frac{\kappa_3 (\kappa_4 + 1)}{\nu} \big( 2 \rho_0 
		+ \frac{r}{\nu \lambda_1} \| f \|^2_{\infty} \big) \Big), \notag \\
		&\rho_1(\| f\|_{\infty},r) \label{eq:rho-1} \\
		&:=\bigg\{1 + 
			\frac{16 \rho_0}{\nu (2 \beta_2 -1) r} 
			\Big( 1 + \frac{4 \rho_0^4(\beta_0^2 + \beta_1^2)^2 } { \nu^4 (2\beta_2-1)^3} \Big) 
			+ \frac{32}{ \nu^2 (2 \beta_2-1)^2 \lambda_1} 
			\Big(1 + \frac{\rho_0^2 (\beta_0^2 + \beta_1^2)^2}{\nu^4 (2 \beta_2-1)^2} \Big) \| f \|_{\infty}^2 \notag \\
			&\ +\frac{\kappa_3 C_{\Delta t}}{\nu^2} \| f \|^2_{\infty}
			\Big( 2 \rho_0 + \frac{r}{\nu \lambda_1} \| f \|^2_{\infty} \Big)
			+ \frac{ r \|f \|^2_{\infty}}{2 \nu} \bigg\} 
			\exp \Big( \! \frac{2 \kappa_3 (\kappa_4 + 1)}{\nu} \big( 2 \rho_0 
			+ \frac{r}{\nu \lambda_1} \| f \|^2_{\infty} \big) \!\Big),\notag \\
		&\rho_2(\| f\|_{\infty},r) \label{eq:rho-2} \\
		&:=\Big[ \rho_1 + \frac{\kappa_3 C_{\Delta t} \| f \|_{\infty}^2}{\nu^2} \Big( 2 (1+\theta) \rho_0 + \frac{r}{\nu} \| f \|_{\infty}^2 \Big) + \frac{r}{2 \nu} \| f \|_{\infty}^2 \Big] \times \notag \\
		&\qquad \times \exp \Big( \frac{2 \kappa_3(\kappa_4 + 1)}{\nu} \Big( 2 (1+\theta) \rho_0 + \frac{r}{\nu} \| f \|_{\infty}^2 \Big) \Big), \notag \\
		&\rho_3 (\| f \|_{\infty}, r)
		:= \frac{(2 \beta_2 -1)r}{ 4(4+3\theta) \rho_1} \exp \Big( \frac{\kappa_3 (\kappa_4 + 1)}{\nu} \big( 2 \rho_0 
		+ \frac{r}{\nu \lambda_1} \| f \|^2_{\infty} \big) \Big). \notag 
	\end{align}

	\begin{theorem}
		Under the time step condition 
		\begin{align}
			\Delta t < \min \big\{ C_{\Delta t}, K_6, \rho_3 \big\}, 
			\label{eq:dt-cond-limit-2} 
		\end{align}
		for $N$ large enough such that $(N - 2) \Delta t > T_{\ast} + r$, we have 
		\begin{align}
			A_{N} \leq \rho_2, 
			\label{eq:AN-rho-2-bound}
		\end{align}
		where $\rho_2$ in \eqref{eq:rho-2} is independent of the initial conditions $u_1$, $u_0$.
	\end{theorem}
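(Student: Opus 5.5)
The plan is to follow the Tone–Wirosoetisno strategy: apply the uniform discrete Gr\"onwall lemma (Lemma \ref{lemma:Gron-uniform}) on sliding windows of length $r$, which requires only time-averaged bounds on $A_n$, on $\|\nabla u_{n,\beta}\|^2$ and on the forcing over any window of $N_r=\lfloor r/\Delta t\rfloor$ steps. Once the step index exceeds $T_\ast/\Delta t$, Corollary \ref{coro:L2-bound-no-init} makes all of these averaged bounds depend only on $\rho_0$, $r$ and $\|f\|_\infty$.

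The first step is the one-step recursion. For $n$ with $(n-2)\Delta t>T_\ast$, insert \eqref{eq:H1-ineq2-uniform} into the $A_{n+1}$-term of \eqref{eq:H1-ineq1-uniform}, exactly as in the proof of Theorem \ref{thm:H1-bound-finite}. This gives
\begin{align*}
A_{n+1}\le A_n\big(1+\Delta t\,\kappa_3(\kappa_4+1)\|\nabla u_{n,\beta}\|^2\big)+\Delta t\Big(\frac{\kappa_3 C_{\Delta t}}{\nu}\|f\|_\infty^2\|\nabla u_{n,\beta}\|^2+\frac{1}{2\nu}\|f\|_\infty^2\Big).
\end{align*}
We then set $k=\Delta t$, $\xi_n=A_{n+1}$, $\eta_{n-1}=\kappa_3(\kappa_4+1)\|\nabla u_{n,\beta}\|^2$ and $\zeta_n$ equal to the bracket, with $n_1\approx T_\ast/\Delta t+2$ and $n_2=N_r$.

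The second step produces the window sums $a_1,a_2,a_3$.
\begin{itemize}
\item For $a_1$ and $a_2$: apply \eqref{eq:stabilityL2-3} with $i=n'$ and $N=n'+N_r$. Bounding the $G$-norm by $\tfrac12(\|u_{n'}\|^2+\|u_{n'-1}\|^2)\le 2\rho_0$ via \eqref{eq:stabilityL2-0} gives $\Delta t\sum\|\nabla u_{n,\beta}\|^2\le \nu^{-1}(2\rho_0+\tfrac{r}{\nu\lambda_1}\|f\|^2_\infty)$. These are the quantities appearing in the exponents of $\rho_1,\rho_2,\rho_3$.
\item For $a_3=\Delta t\sum A_n$: since $A_n\le\tfrac{1+\theta}{4}\|\nabla u_n\|^2+\tfrac{1-\theta}{4}\|\nabla u_{n-1}\|^2$, use \eqref{eq:u_{n+1}L^2(H^1)} with $K_2$ replaced by $\sqrt{2\rho_0}$ after $T_\ast$. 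This bounds $\Delta t\sum\|\nabla u_{j+1}\|^2$ by $\tfrac{8}{\nu(2\beta_2-1)}$ times the right-hand side of \eqref{eq:u_{n+1}L^2(H^1)}.
\end{itemize}

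The main obstacle is that this right-hand side contains the boundary terms $\Delta t\,\nu(\tfrac14\|\nabla u_i\|^2+\dots)$ at the left endpoint $i=n'$, which are not yet controlled independently of the initial data. I would handle them in two stages.

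First, cover the transition window just after $T_\ast$. Theorem \ref{thm:H1-bound-finite} with $(n-1)\Delta t\le T_\ast+2C_{\Delta t}$, together with \eqref{eq:u-nabla-sum} bounded via $\|u_1\|,\|u_0\|$ absorbed into $K_4$, gives $A_{n'}\le K_5$ for the first admissible $n'$. Then $\Delta t\,\nu\|\nabla u_{n'}\|^2\lesssim\Delta t K_5$, which is $O(1)$ because $\Delta t<K_6$; this is precisely how the constraint involving $K_6$ enters. Lemma \ref{lemma:Gron-uniform} then yields $A_n\le\rho_1$ on the first window $[n_1+N_r+1,\,n_1+2N_r]$. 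Note that the $a_3/(kn_2)$ term produces the $\tfrac{16\rho_0}{\nu(2\beta_2-1)r}(\cdots)$ and $\|f\|^2$ pieces of $\rho_1$, and the condition $\Delta t<K_6$ makes the $K_5$ contribution at most $1$.

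Second, induct on windows. Assuming $A_{n'}\le\rho_1$ at the start of a later window, the condition $\Delta t<\rho_3$ ensures $\Delta t\,\rho_1\cdot(\text{const})\le 1$, so the same computation returns $A_n\le\rho_1$ on the next window. This makes the bound self-perpetuating for all $n$ with $(n-2)\Delta t>T_\ast+r$. To pass to $\rho_2$, rather than $\rho_1$, for every such $N$ including points inside a window, I would run one more application of Lemma \ref{lemma:Gron-ineq} over at most $N_r$ steps from a point where $A\le\rho_1$. The window sum of $\|\nabla u_{n,\beta}\|^2$ is now bounded using $\|u\|^2\le 2\rho_0$ with the $G$-norm weight $(1+\theta)$, which produces exactly the exponential factor and additive terms in $\rho_2$. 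This gives \eqref{eq:AN-rho-2-bound}, and $\rho_2$ is independent of $u_0$ and $u_1$.
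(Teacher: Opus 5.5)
Your plan is essentially the paper's proof. It uses the same recursion \eqref{eq:An-uniform-ineq1}, the same window sums from \eqref{eq:stabilityL2-3} and the $\ell^2(H^1)$ estimate of Theorem \ref{thm:stab-L2} with $K_2^2$ replaced by $2\rho_0$, the left-endpoint term controlled first by $A_{N_0}\le K_5$ with $\Delta t<K_6$ and then by $\rho_1$ with $\Delta t<\rho_3$, and a restart of the finite-interval Gr\"onwall argument of Theorem \ref{thm:H1-bound-finite} to get $\rho_2$ between the indices where $A\le\rho_1$.

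The one bookkeeping point to repair is your choice $n_2=N_r$ together with a whole-window conclusion. That conclusion needs window sums for every start $n'$ in a range of length $N_r$, but the single bound $A_{n'}\le K_5$ at the first admissible $n'$ controls only one left endpoint. So you must either bound all these windows by one union sum from that endpoint, or do as the paper does: take $n_2=N_r-3$ and $n_\ast=n_1+n_2+1$. Then only the two windows starting at $N_0+1,N_0+2$ occur, both inside $[N_0+1,N_0+N_r]$ and handled with $i=N_0$, and Lemma \ref{lemma:Gron-uniform} yields just the point bound $A_{N_0+N_r}\le\rho_1$. The paper's choice also keeps $(n_2+1)\Delta t<r$, which $n_2=N_r$ does not, so your constants would not literally be $\rho_1$.
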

	\begin{proof}
		Let $N_0$ be the minumum integer such that $(N_0 - 2)> T_{\ast}$.
		By \eqref{eq:H1-ineq1-uniform} and \eqref{eq:H1-ineq2-uniform} in Lemma \ref{lemma:auxi-ineq}, for $n = N_0 - 1, N_0, \cdots$, we have 
\begin{confidential}
	\color{darkblue}
	\begin{align*}
		A_{n+1} - A_{n} 
		\leq& \frac{\Delta t}{2\nu} \| f \|_{\infty}^2 
		+ \Delta t \kappa_3 A_{n+1} \| \nabla u_{n,\beta} \|^2 
		+ \Delta t \kappa_3 A_{n} \| \nabla u_{n,\beta} \|^2 \\
		\leq& \frac{\Delta t}{2\nu} \| f \|_{\infty}^2 
		+ \Delta t \kappa_3 \| \nabla u_{n,\beta} \|^2 \big( \kappa_4 A_n + \frac{\Delta t}{\nu} \| f \|_{\infty}^2 \big) 
		+ \Delta t \kappa_3 A_{n} \| \nabla u_{n,\beta} \|^2 \\
		=& \frac{\Delta t}{2\nu} \| f \|_{\infty}^2 
		+ \Delta t \kappa_3 \kappa_4 \| \nabla u_{n,\beta} \|^2 A_n 
		+ \Delta t \kappa_3 \| \nabla u_{n,\beta} \|^2 A_{n}
		+ \frac{{\Delta t}^2 \kappa_3}{\nu} \| \nabla u_{n,\beta} \|^2 \| f \|_{\infty}^2 \\
		\leq& \frac{\Delta t}{2\nu} \| f \|_{\infty}^2
		+ \kappa_3 (\kappa_4 + 1) \Delta t \| \nabla u_{n,\beta} \|^2 A_{n}
		+ \frac{\kappa_3 C_{\Delta t}}{\nu} \| f \|_{\infty}^2 \Delta t \| \nabla u_{n,\beta} \|^2.
	\end{align*}
	\normalcolor
\end{confidential}
		\begin{align}
			A_{n+1} - A_{n} \leq&
			\kappa_3 (\kappa_4 + 1) \Delta t \| \nabla u_{n,\beta} \|^2 A_{n} 
			+ \frac{\kappa_3 C_{\Delta t} \| f \|_{\infty}^2}{\nu} \Delta t \| \nabla u_{n,\beta} \|^2 
			+ \frac{\Delta t \| f \|_{\infty}^2}{2\nu}, 
			\label{eq:An-uniform-ineq1} 
		\end{align} 
		where $\kappa_3$ and $\kappa_4$ in \eqref{eq:rho-1}-\eqref{eq:rho-2} are independent of the initial conditions $u_0$ and $u_{1}$. 
		Now we fixed some integer $r >0$ large enough such that $N_r = \lfloor r/ \Delta t \rfloor > 5$.
		We apply Lemma \ref{lemma:Gron-uniform} with $n_1 = N_0+1$, $n_2 = N_r - 3$, $n_{\ast} = n_1 + n_2 + 1= N_0 + N_r-1$ 
		and set 
		\begin{align*}
			&k = \Delta t, \quad \xi_n = A_{n+1}, \quad 
			\eta_{n-1} = \kappa_3 (\kappa_4 + 1) \|\nabla u_{n,\beta}\|^2, \\
			&\zeta_n = \frac{\kappa_3 C_{\Delta t}}{\nu} \|f\|^2_{\infty} \|\nabla u_{n,\beta}\|^2 
			+ \frac{1}{2 \nu} \|f \|^2_{\infty}, 
		\end{align*}
		in Lemma \ref{lemma:Gron-uniform} to have for $n' = N_0+1$, $N_0 + 2$ 
\begin{confidential}
	\color{darkblue}
	\begin{align*}
		&n_1 = N_0 + 1 \leq n' \leq = n_{\ast} - n_2 = n_1 + 1 = N_0 +2, \\
		&n_1 + n_2 + 1 = N_0 + 1 + N_r - 3 + 1 = N_0 + N_r - 1 = n_{\ast}.  
	\end{align*}
	\normalcolor
\end{confidential}
		\begin{align}
			&\sum_{n=n'}^{n'+n_2} k \eta_n 
			\label{eq:H1-eta-sum} \\
			=&
			\kappa_3 (\kappa_4 + 1) \sum_{n=n'}^{n'+n_2} \Delta t \| \nabla u_{n,\beta} \|^2  \notag \\
			\leq& \frac{\kappa_3 (\kappa_4 + 1)}{\nu} \Big( 2
			\begin{Vmatrix}
				u_{n'} \\ u_{n'-1} 
			\end{Vmatrix}_{G(\theta)}^2	
			+ \frac{((n'+n_2 +1) - n')\Delta t}{\nu \lambda_1} \| f \|^2_{\infty} \Big) \tag{\text{by} \eqref{eq:stabilityL2-3}} \\
			\leq& \frac{\kappa_3 (\kappa_4 + 1)}{\nu} \Big( \frac{1 + \theta}{2} \underbrace{\| u_{n'} \|^2}_{\leq 2 \rho_0} + \frac{1 - \theta}{2} \underbrace{\| u_{n'-1} \|^2}_{\leq 2 \rho_0} 
			+ \frac{(n_2 + 1) \Delta t}{\nu \lambda_1} \| f \|^2_{\infty} \Big) 
			\tag{\text{by} \eqref{eq:stabilityL2-0}} \\
			\leq& \frac{\kappa_3 (\kappa_4 + 1)}{\nu} \Big( 2 \rho_0 
			+ \frac{r}{\nu \lambda_1} \| f \|^2_{\infty} \Big), \notag 
		\end{align}
		and
		\begin{align*}
			\sum_{n=n'}^{n'+n_2} k \xi_n 
			=& \sum_{n=n'}^{n'+n_2} \Delta t \big( \frac{1+\theta}{4} \| \nabla u_{n+1} \|^2 + \frac{1-\theta}{4}\| \nabla u_{n} \|^2 \big) \\
			\leq& \sum_{n=n'}^{n'+n_2+1} \Delta t \big( \frac{1 + \theta}{4} + \frac{1 - \theta}{4} \big) \| \nabla u_n \|^2  \\
			=& \frac{1}{2} \sum_{n=N_0+1}^{N_0+ N_r} \Delta t \| \nabla u_n \|^2
		\end{align*}
		By the similar argument to the proof of \eqref{eq:u_{n+1}L^2(H^1)}, we have for $i \geq N_0$ and $N \geq i+1$
		\begin{align*}
			& 
			\nu \Delta t \frac{(2\beta_2-1)}{8} 
			\sum_{j=i+1}^{N} \| \nabla u_{j} \|^2 \\ 
			\leq&
			\Big(1 + \frac{(2 \rho_0)^2 (\beta_0^2 + \beta_1^2)^2}{\nu^4 (2\beta_2-1)^3} \Big) 
			\begin{Vmatrix}
				{u_{i}} \\
				{u_{i-1}}
			\end{Vmatrix}
			_{G(\theta )}^{2}
			+ \Delta t \nu \Big[ \frac{1}{4} \|\nabla u_{i} \|^2
			+ \Big( \frac{\beta_0}{2} + \frac{(2\beta_2-1)}{8} \Big)\|\nabla u_{i-1} \|^2 \Big] \notag \\
			& 
			+ \frac{(N-i) \Delta t}{ \nu (2 \beta_2-1) \lambda_1} 
			\Big( 2 + \frac{(2 \rho_0)^2 (\beta_0^2 + \beta_1^2)^2}{2 \nu^4 (2 \beta_2-1)^2} \Big) \| f \|_{\infty}^2 \notag \\
			\leq& \Big(1 + \frac{4 \rho_0^2 (\beta_0^2 + \beta_1^2)^2}{\nu^4 (2\beta_2-1)^3} \Big) \rho_0 
			+ \nu \Delta t \Big( \frac{1}{4} \| \nabla u_i \|^2 + \frac{(1 - \theta)(4 + 3\theta)}{16} \| \nabla u_{i-1} \|^2 \Big) \notag \\
			& + \frac{(N-i) \Delta t}{ \nu (2 \beta_2-1) \lambda_1} 
			\Big( 2 + \frac{2 \rho_0^2 (\beta_0^2 + \beta_1^2)^2}{\nu^4 (2 \beta_2-1)^2} \Big) \| f \|_{\infty}^2, \notag 
		\end{align*}
\begin{confidential}
	\color{darkblue}
	\begin{align*}
		& 
		\nu \Delta t \frac{(2\beta_2-1)}{8} 
		\sum_{j=i+1}^{N} \| \nabla u_{j} \|^2 \\ 
		\leq&
		\Big(1 + \frac{(2 \rho_0)^2 (\beta_0^2 + \beta_1^2)^2}{\nu^4 (2\beta_2-1)^3} \Big) 
		\begin{Vmatrix}
			{u_{i}} \\
			{u_{i-1}}
		\end{Vmatrix}
		_{G(\theta )}^{2}
		+ 
		\Delta t \nu \Big[ \frac{1}{4} \|\nabla u_{i} \|^2
		+ \Big( \frac{\beta_0}{2} + \frac{(2\beta_2-1)}{8} \Big)\|\nabla u_{i-1} \|^2 \Big]
		\notag \\
		& 
		+ \frac{(N-i) \Delta t}{ \nu (2 \beta_2-1) \lambda_1} 
		\Big( 2 + \frac{(2 \rho_0)^2 (\beta_0^2 + \beta_1^2)^2}{2 \nu^4 (2 \beta_2-1)^2} \Big) \| f \|_{\infty}^2
		\notag \\
		\leq&
		\Big(1 + \frac{4 \rho_0^2 (\beta_0^2 + \beta_1^2)^2}{\nu^4 (2\beta_2-1)^3} \Big) 
		\Big(\frac{1+\theta}{4} 2 \rho_0 + \frac{1-\theta}{4}2 \rho_0 \Big) 
		+ \nu \Delta t \Big( \frac{1}{4} \| \nabla u_i \|^2 + \frac{4 \beta_0 + 2 \beta_2 - 1 }{8} \| \nabla u_{i-1} \|^2 \Big) \notag \\
		& 
		+ \frac{(N-i) \Delta t}{ \nu (2 \beta_2-1) \lambda_1} 
		\Big( 2 + \frac{2 \rho_0^2 (\beta_0^2 + \beta_1^2)^2}{\nu^4 (2 \beta_2-1)^2} \Big) \| f \|_{\infty}^2 \notag \\
		=& \Big(1 + \frac{4 \rho_0^2 (\beta_0^2 + \beta_1^2)^2}{\nu^4 (2\beta_2-1)^3} \Big) \rho_0 
		+ \nu \Delta t \Big( \frac{1}{4} \| \nabla u_i \|^2 + \frac{(1 -\theta)(4+3\theta)}{16} \| \nabla u_{i-1} \|^2 \Big) \notag \\
		& + \frac{(N-i) \Delta t}{ \nu (2 \beta_2-1) \lambda_1} 
		\Big( 2 + \frac{2 \rho_0^2 (\beta_0^2 + \beta_1^2)^2}{\nu^4 (2 \beta_2-1)^2} \Big) \| f \|_{\infty}^2 \notag  \\
		\leq& \Big(1 + \frac{4 \rho_0^2 (\beta_0^2 + \beta_1^2)^2}{\nu^4 (2\beta_2-1)^3} \Big) \rho_0 
		+ \frac{\nu \Delta t (4+3\theta)}{4} A_{i}
		+ \frac{(N-i) \Delta t}{ \nu (2 \beta_2-1) \lambda_1} 
		\Big( 2 + \frac{2 \rho_0^2 (\beta_0^2 + \beta_1^2)^2}{\nu^4 (2 \beta_2-1)^2} \Big) \| f \|_{\infty}^2
	\end{align*}
	\begin{align*}
		&\frac{4 \beta_0 + 2 \beta_2 - 1 }{8} 
		= \frac{1}{8} \Big( \cancel{4} \frac{1}{\cancel{4}}(2 - \theta - \theta^2) + 2 \frac{1}{4}(2 + \theta - \theta^2) - 1 \Big) 
		= \frac{1}{8} \Big( 2 - \theta - \theta^2 + \frac{1}{2} (2 + \theta - \theta^2) - 1 \Big) \\
		=& \frac{1}{8} \Big( 2 - \theta - \theta^2 \cancel{+ 1} + \frac{1}{2}\theta - \frac{1}{2}\theta^2 \cancel{- 1} \Big)
		= \frac{1}{8} \Big( 2 - \frac{1}{2}\theta - \frac{3}{2} \theta^2 \Big)
		= \frac{4 - \theta - 3 \theta^2}{16} 
		= \frac{(1 - \theta)(4 + 3\theta)}{16}.
	\end{align*}
	\begin{align*}
		\frac{1}{4} \| \nabla u_i \|^2 + \frac{(1 -\theta)(4+3\theta)}{16} \| \nabla u_{i-1} \|^2 
		=& \frac{1}{1+\theta} \cdot \frac{1 + \theta}{4} \| \nabla u_i \|^2
		+ \frac{4+3\theta}{4} \cdot \frac{1-\theta}{4} \| \nabla u_{i-1} \|^2 \\
		\leq& \frac{4+3\theta}{4} \Big( \frac{1 + \theta}{4} \| \nabla u_i \|^2 + \frac{1-\theta}{4} \| \nabla u_{i-1} \|^2 \Big)
		= \frac{4+3\theta}{4} A_i 
	\end{align*}
	\normalcolor
\end{confidential}
		or equivalently, 
		\begin{align}
			\sum_{j=i+1}^{N} \Delta t  \| \nabla u_{j} \|^2 
			\leq& \frac{8 \rho_0}{\nu (2 \beta_2 -1)} 
			\Big(1 + \frac{4 \rho_0^2 (\beta_0^2 + \beta_1^2)^2}{\nu^4 (2\beta_2-1)^3} \Big)
			+ \frac{2 \Delta t (4 + 3\theta)}{(2 \beta_2 -1)}A_i  
			\label{eq:An-uniform-ineq2} \\
			&
			+ \frac{16 (N-i) \Delta t}{ \nu^2 (2 \beta_2-1)^2 \lambda_1} 
			\Big(1 + \frac{\rho_0^2 (\beta_0^2 + \beta_1^2)^2}{\nu^4 (2 \beta_2-1)^2} \Big) \| f \|_{\infty}^2.
			\notag 
		\end{align}
		We let $i = N_0$ and $N = N_0 + N_r$ in \eqref{eq:An-uniform-ineq2}
		and obtain 
		\begin{align}
			\sum_{n=n'}^{n'+n_2} k \xi_n 
			\label{eq:H1-xi-sum}
			\leq& \frac{1}{2} \sum_{n=N_0+1}^{N_0+ N_r} \Delta t \| \nabla u_n \|^2 \\
			\leq& 
			\frac{4 \rho_0}{\nu (2 \beta_2 -1)} 
			\Big( 1 + \frac{4 \rho_0^4(\beta_0^2 + \beta_1^2)^2 } { \nu^4 (2\beta_2-1)^3}  \Big) 
			+ \frac{\Delta t (4 + 3\theta)}{ (2 \beta_2 -1)} A_{N_0} \notag \\
			& 
			+ \frac{8 N_r \Delta t}{ \nu^2 (2 \beta_2-1)^2 \lambda_1} 
			\Big(1 + \frac{\rho_0^2 (\beta_0^2 + \beta_1^2)^2}{\nu^4 (2 \beta_2-1)^2} \Big) \| f \|_{\infty}^2. \notag 
		\end{align}
		By \eqref{eq:H1-eta-sum}
\begin{confidential}
	\color{darkblue}
	\begin{align*}
		\sum_{n = n'}^{n'+ n_2} k \zeta_n 
		=&\sum_{n = n'}^{n'+ n_2} \Big( \frac{\kappa_3 C_{\Delta t}}{\nu} \|f\|^2_{\infty} \Delta t \|\nabla u_{n,\beta}\|^2 + \frac{1}{2 \nu} \|f \|^2_{\infty} \Delta t \Big) \\
		=& \frac{\kappa_3 C_{\Delta t}}{\nu} \|f\|^2_{\infty}  \sum_{n = n'}^{n'+ n_2} \Delta t \|\nabla u_{n,\beta}\|^2
		+ \frac{\|f \|^2_{\infty}}{2 \nu} \sum_{n = n'}^{n'+ n_2} \Delta t \\
		\leq& \frac{\kappa_3 C_{\Delta t}}{\nu} \|f\|^2_{\infty} \frac{1}{\nu}
		\Big( 2 \rho_0 + \frac{r}{\nu \lambda_1} \| f \|^2_{\infty} \Big)
		+ \frac{\|f \|^2_{\infty}}{2 \nu} \big( \underbrace{\bcancel{n'}+ n_2 \bcancel{- n'} + 1}_{n_2 + 1 = N_r - 2} \big) \Delta t \\
		\leq& \frac{\kappa_3 C_{\Delta t}}{\nu^2} \| f \|^2_{\infty}
		\Big( 2 \rho_0 + \frac{r}{\nu \lambda_1} \| f \|^2_{\infty} \Big)
		+ \frac{ r \|f \|^2_{\infty}}{2 \nu} 
	\end{align*}
	\normalcolor
\end{confidential}
		\begin{align}
			\sum_{n = n'}^{n'+ n_2} k \zeta_n 
			=& \frac{\kappa_3 C_{\Delta t}}{\nu} \|f\|^2_{\infty}  \sum_{n = n'}^{n'+ n_2} \Delta t \|\nabla u_{n,\beta}\|^2
			+ \frac{\|f \|^2_{\infty}}{2 \nu} \sum_{n = n'}^{n'+ n_2} \Delta t \label{eq:H1-zeta-sum} \\
			\leq& \frac{\kappa_3 C_{\Delta t}}{\nu^2} \| f \|^2_{\infty}
			\Big( 2 \rho_0 + \frac{r}{\nu \lambda_1} \| f \|^2_{\infty} \Big)
			+ \frac{ r \|f \|^2_{\infty}}{2 \nu}. \notag 
		\end{align}
		We combine \eqref{eq:H1-eta-sum}, \eqref{eq:H1-xi-sum}, \eqref{eq:H1-zeta-sum} 
		and apply Lemma \ref{lemma:Gron-uniform} to \eqref{eq:An-uniform-ineq1}
		\begin{align}
			&A_{N_0 + N_r} \label{eq:A-N0-Nr-ineq} \\
			\leq& \bigg\{ 
			\frac{4 \rho_0}{\nu (2 \beta_2 -1) (N_r - 3) \Delta t} 
			\Big( 1 + \frac{4 \rho_0^4(\beta_0^2 + \beta_1^2)^2 } { \nu^4 (2\beta_2-1)^3} \Big) 
			+ \frac{\Delta t (4+3\theta)}{ (2 \beta_2 -1) (N_r - 3) \Delta t} 
			A_{N_0} \notag \\
			& 
			+ \frac{8 N_r \Delta t}{ \nu^2 (2 \beta_2-1)^2 \lambda_1 (N_r - 3) \Delta t} 
			\Big(1 + \frac{\rho_0^2 (\beta_0^2 + \beta_1^2)^2}{\nu^4 (2 \beta_2-1)^2} \Big) \| f \|_{\infty}^2 \notag \\
			& +\frac{\kappa_3 C_{\Delta t}}{\nu^2} \| f \|^2_{\infty}
			\Big( 2 \rho_0 + \frac{r}{\nu \lambda_1} \| f \|^2_{\infty} \Big)
			+ \frac{ r \|f \|^2_{\infty}}{2 \nu} \bigg\} 
			\exp \Big( \frac{\kappa_3 (\kappa_4 + 1)}{\nu} \big( 2 \rho_0 
			+ \frac{r}{\nu \lambda_1} \| f \|^2_{\infty} \big) \Big) \notag \\
			\leq& \bigg\{ 
			\frac{16 \rho_0}{\nu (2 \beta_2 -1) r} 
			\Big( 1 + \frac{4 \rho_0^4(\beta_0^2 + \beta_1^2)^2 } { \nu^4 (2\beta_2-1)^3} \Big) 
			+ \frac{4 (4+3\theta) \Delta t}{ (2 \beta_2 -1) r} A_{N_0}
			\notag \\
			& 
			+ \frac{32}{ \nu^2 (2 \beta_2-1)^2 \lambda_1} 
			\Big(1 + \frac{\rho_0^2 (\beta_0^2 + \beta_1^2)^2}{\nu^4 (2 \beta_2-1)^2} \Big) \| f \|_{\infty}^2 \notag \\
			& +\frac{\kappa_3 C_{\Delta t}}{\nu^2} \| f \|^2_{\infty}
			\Big( 2 \rho_0 + \frac{r}{\nu \lambda_1} \| f \|^2_{\infty} \Big)
			+ \frac{ r \|f \|^2_{\infty}}{2 \nu} \bigg\} 
			\exp \Big( \frac{\kappa_3 (\kappa_4 + 1)}{\nu} \big( 2 \rho_0 
			+ \frac{r}{\nu \lambda_1} \| f \|^2_{\infty} \big) \Big), \notag
		\end{align}
		where the last inequality is done by the facts that $N_r \Delta t < r$ and that $(N_r - 3) \Delta t > \frac{1}{4}(N_r + 1) \Delta t > \frac{r}{4}$.
		By \eqref{eq:N-T-ast-L2}
		\begin{align}
			\| u_1 \|^2 + \| u_0 \|^2
			= \frac{\rho_0}{ \widehat{C}_h(\theta)} \exp \Big( \frac{\nu \lambda_1 T_\ast}{4 C_{\epsilon}(\theta)} \Big). 
			\label{eq:initial-bound-L2}
		\end{align}
		By \eqref{eq:An-bound-K3} in Theorem \ref{thm:H1-bound-finite}, 
		\eqref{eq:K-4}, \eqref{eq:K-5}, \eqref{eq:initial-bound-L2}
		and the fact that $(N_0 - 1) \Delta t < T_{\ast} + 2 \Delta t$, 
		we have $A_{N_0} \leq K_5$. 
\begin{confidential}
	\color{darkblue}
	\begin{align*}
		2
		\begin{Vmatrix}
			u_{1} \\ u_0
		\end{Vmatrix}_{G(\theta)}^2 + \frac{(n-1) \Delta t}{\nu} \|f\|^2_{\infty}
		\leq& \frac{1 + \theta}{2} \big( \| u_1 \|^2 + \| u_0 \|^2 \big)
		+ \frac{(N_0-1) \Delta t}{\nu} \|f\|^2_{\infty} \\
		\leq& \frac{(1+\theta)\rho_0}{ 2 \widehat{C}_h(\theta)} \exp \Big( \frac{\nu \lambda_1 T_\ast}{4 C_{\epsilon}(\theta)} \Big)
		+ \frac{T_{\ast} + 2 C_{\Delta t}}{\nu} \|f\|^2_{\infty} = K_4.
	\end{align*}
	\begin{align*}
		A_{N_0} \leq \Big( A_1 + \frac{\kappa_1 C_{\Delta t} \|f\|^2_{\infty} K_4}{\nu^2} + \frac{T_{\ast} + 2 C_{\Delta t}}{2 \nu} \|f\|^2_{\infty} \Big) \exp \big(\frac{\kappa_1 (\kappa_2 + 1) K_4}{\nu}\big) = K_5.
	\end{align*}
	If $\Delta t$ small enough such that
	\begin{align*}
		\frac{4 (4+3\theta) K_5 \Delta t}{ (2 \beta_2 -1) r} 
		\leq \exp \bigg\{ \frac{\kappa_3 (\kappa_4 + 1)}{\nu} \Big( 2 \rho_0 
		+ \frac{r}{\nu \lambda_1} \| f \|^2_{\infty} \Big) \bigg\}
	\end{align*}
	\normalcolor
\end{confidential}
		If $\Delta t$ satisfies \eqref{eq:dt-cond-limit-2}, we have 
		\begin{align*}
			\frac{4 (4+3\theta) \Delta t}{ (2 \beta_2 -1) r} A_{N_0}
			\leq 
			\frac{4 (4+3\theta) \Delta t K_{5}}{ (2 \beta_2 -1) r} 
			\leq 
			\exp \Big( \frac{\kappa_3 (\kappa_4 + 1)}{\nu} \big( 2 \rho_0 
			+ \frac{r}{\nu \lambda_1} \| f \|^2_{\infty} \big) \Big), 
		\end{align*}
		and \eqref{eq:A-N0-Nr-ineq} becomes 
		\begin{align}
			&A_{N_0 + N_r} 
			\leq \rho_1 (\|f \|_{\infty}, r), \label{eq:A-N0-Nr-ineq-2} 
		\end{align}
		where $\rho_1 (\|f \|_{\infty}, r)$ in \eqref{eq:rho-1} is independent of initial conditions.
\begin{confidential}
	\color{darkblue}
	\begin{align*}
			&A_{N_0 + N_r} \\
			\leq& \bigg\{1 + 
			\frac{16 \rho_0}{\nu (2 \beta_2 -1) r} 
			\Big( 1 + \frac{4 \rho_0^4(\beta_0^2 + \beta_1^2)^2 } { \nu^4 (2\beta_2-1)^3} \Big) 
			+ \frac{32}{ \nu^2 (2 \beta_2-1)^2 \lambda_1} 
			\Big(1 + \frac{\rho_0^2 (\beta_0^2 + \beta_1^2)^2}{\nu^4 (2 \beta_2-1)^2} \Big) \| f \|_{\infty}^2 \notag \\
			& +\frac{\kappa_3 C_{\Delta t}}{\nu^2} \| f \|^2_{\infty}
			\Big( 2 \rho_0 + \frac{r}{\nu \lambda_1} \| f \|^2_{\infty} \Big)
			+ \frac{ r \|f \|^2_{\infty}}{2 \nu} \bigg\} 
			\exp \bigg\{ \! \frac{2 \kappa_3 (\kappa_4 + 1)}{\nu} \Big( 2 \rho_0 
			+ \frac{r}{\nu \lambda_1} \| f \|^2_{\infty} \Big) \!\bigg\} \notag \\
			=:& \rho_1 (\| f \|_{\infty}, r). \notag 
		\end{align*}
	\normalcolor
\end{confidential}
		We use similar argument to the proof of Theorem \ref{thm:H1-bound-finite} and replace the initial conditions by $u_{N_0 + N_r}$, $u_{N_0 + N_r - 1}$ to have for $n = N_0+N_r+1, \cdots, N_0 + 2N_r$
		\begin{align*}
			A_n \leq& 
			\bigg\{ A_{N_0 + N_r} + \frac{\kappa_3 C_{\Delta t} \| f \|_{\infty}^2}{\nu^2} \Big( 2
			\begin{Vmatrix}
				u_{N_0 + N_r} \\ u_{N_0 + N_r-1} 
			\end{Vmatrix}_{G(\theta)}^2 + 
			\frac{N_r \Delta t}{\nu} \| f \|_{\infty}^2 \Big) 
			+ \frac{N_r \Delta t}{2 \nu} \| f \|_{\infty}^2 \bigg\}  \\
			&\times \exp\Big(\frac{\kappa_3(\kappa_4 + 1)}{\nu} \Big( 2
			\begin{Vmatrix}
				u_{N_0 + N_r} \\ u_{N_0 + N_r-1} 
			\end{Vmatrix}_{G(\theta)}^2 + 
			\frac{N_r \Delta t}{\nu} \| f \|_{\infty}^2\Big) \Big). 
		\end{align*}
		By \eqref{eq:stabilityL2-0} in Corollary \ref{coro:L2-bound-no-init},  \eqref{eq:A-N0-Nr-ineq-2} and the fact $N_r \Delta t < r$, we have 
\begin{confidential}
	\color{darkblue}
	\begin{align*}
			A_n \leq& 
			\Big[ \rho_1 + \frac{\kappa_3 C_{\Delta t} \| f \|_{\infty}^2}{\nu^2} \Big( 2 (1+\theta) \rho_0 + \frac{r}{\nu} \| f \|_{\infty}^2 \Big) + \frac{r}{2 \nu} \| f \|_{\infty}^2 \Big] \times \\
			&\times \exp \Big( \frac{\kappa_3(\kappa_4 + 1)}{\nu} \Big( 2 (1+\theta) \rho_0 + \frac{r}{\nu} \| f \|_{\infty}^2 \Big) \Big)
			:=\rho_2 ( \| f \|_{\infty}, r). 
		\end{align*}
	\normalcolor
\end{confidential}
		\begin{align*}
			A_n \leq \rho_2 ( \| f \|_{\infty}, r), 
		\end{align*}
		where $\rho_2 (\|f \|_{\infty}, r)$ in \eqref{eq:rho-2} is independent of initial conditions. 
		Then we apply Lemma \ref{lemma:Gron-uniform} with $n_1 = N_0 + N_r + 1$, $n_2 = N_r - 3$, and $n_{\ast} = N_0+2 N_r-1$,
		\begin{align}
			&A_{N_0 + 2N_r}   
			\label{eq:An-initial-later}\\
			\leq& \bigg\{ 
			\frac{16 \rho_0}{\nu (2 \beta_2 -1) r} 
			\Big( 1 + \frac{4 \rho_0^4(\beta_0^2 + \beta_1^2)^2 } { \nu^4 (2\beta_2-1)^3} \Big) 
			+ \frac{4 (4+3\theta) \Delta t}{ (2 \beta_2 -1) r} A_{N_0+N_r}
			\notag \\
			& 
			+ \frac{32}{ \nu^2 (2 \beta_2-1)^2 \lambda_1} 
			\Big(1 + \frac{\rho_0^2 (\beta_0^2 + \beta_1^2)^2}{\nu^4 (2 \beta_2-1)^2} \Big) \| f \|_{\infty}^2 \notag \\
			& +\frac{\kappa_3 C_{\Delta t}}{\nu^2} \| f \|^2_{\infty}
			\big( 2 \rho_0 + \frac{r}{\nu \lambda_1} \| f \|^2_{\infty} \big)
			+ \frac{ r \|f \|^2_{\infty}}{2 \nu} \bigg\} 
			\exp \Big( \frac{\kappa_3 (\kappa_4 + 1)}{\nu} \big( 2 \rho_0 
			+ \frac{r}{\nu \lambda_1} \| f \|^2_{\infty} \big) \Big), \notag
		\end{align}
		By \eqref{eq:dt-cond-limit-2} and \eqref{eq:A-N0-Nr-ineq-2}, 
		\begin{align*} 
			\frac{4 (4+3\theta) \Delta t}{ (2 \beta_2 -1) r} A_{N_0+N_r} 
			\leq \frac{4 (4+3\theta) \Delta t \rho_1}{ (2 \beta_2 -1) r} 
			\leq \exp \Big( \frac{\kappa_3 (\kappa_4 + 1)}{\nu} \big( 2 \rho_0 
			+ \frac{r}{\nu \lambda_1} \| f \|^2_{\infty} \big) \Big).
		\end{align*}
		Hence \eqref{eq:An-initial-later} becomes 
		\begin{align}
			\label{eq:A-N0-2Nr-bound}
			&A_{N_0 + 2N_r} \leq \rho_1. 
		\end{align}
		We use \eqref{eq:A-N0-2Nr-bound} and similar argument to the proof of Theorem \ref{thm:H1-bound-finite} with initial condition $A_{N_0 + 2N_r}$ to obtain for $n = N_0 +2N_r +1, \cdots, N_0+3N_r$, 
		\begin{align*}
			A_n \leq \rho_2. 
		\end{align*}
		Then we apply Lemma \ref{lemma:Gron-uniform} with $n_1 = N_0 + 2N_r + 1$, $n_2 = N_r - 3$, and $n_{\ast} = N_0+3 N_r-1$,
		Under the time step \eqref{eq:dt-cond-limit-2}, we have $A_{N_0 + 3 N_r} \leq \rho_1$.
		We iterate the above process and have \eqref{eq:AN-rho-2-bound}  
		for $N$ large enough such that $(N - 2) \Delta t > T_{\ast} + r$. 
	\end{proof}

	\section{Conclusion}  \label{sec:conclusion} 
		In this report, we employ the family of semi-discrete-in-time DLN formulations for the two-dimensional Navier-Stokes equations and prove the uniform-in-time stability in $L^2$- and $H^1$-norm via 
		a newly-derived $G$-stability identity for the family of DLN schemes under uniform time grids in Appendix \ref{sec:appendix}.
		The stability in $L^2$-norm holds under this auxiliary identity and the time step restriction in \eqref{eq:dt-limit-1}.
		For the proof of stability in $H^1$-norm, we first establish the boundedness of solutions for the finite time interval and then use the discrete uniform Gr\"onwall lemma to derive the uniform upper bound for later time under the time step constraint in \eqref{eq:dt-cond-limit-2} by iteration.  
		Essentially, the discrete global attractors are independent of initial conditions and true for all values of method parameter $\theta \in (0,1)$. 
		Since the family of DLN methods enjoys the non-linear stability and high accuracy under arbitrary time grids, we are encouraged to extend the results to the variable time step scenario in the future.

	\section{Acknowledgement} Isabel Barrio Sanchez and Catalin Trenchea are partially supported by the National Science Foundation under grant DMS-2208220.

	\appendix 

	\section{Proof of Lemma \ref{lem:H-stab}} \label{sec:appendix}
	\begin{proof}
		If there exists $\epsilon>0$, $a, b, c \in \mathbb{R}$, and the positive definite matrix $H(\theta)$ in \eqref{eq:Hmatrix} such that \eqref{eq:H-stab} holds, then we obtain the following system by matching the coefficients of terms $\| u_{n+1} \|^2$, $\| u_n\|^2$, $\| u_{n-1} \|^2$, $(u_{n+1}, u_{n})$, $(u_{n+1}, u_{n-1})$, $(u_{n}, u_{n-1})$
\begin{confidential}
	\color{darkblue}
	For the left hand side of \eqref{eq:H-stab}
    \begin{align*}
        &\begin{Vmatrix}
                u_{n+1} \\
                u_{n}
        \end{Vmatrix}_{G(\theta)}^2
        - 
        \begin{Vmatrix}
            u_{n} \\
            u_{n-1}
        \end{Vmatrix}_{G(\theta)}^2
        + \Big\| \sum_{\ell}^{2} a_{\ell} u_{n-1+\ell} \Big\|^{2}
        + \frac{\nu \Delta t \lambda_1}{2} \| u_{n,\beta} \|^2 \\
        =& \Big( \frac{1+\theta}{4} \| u_{n+1} \|^2 + \frac{1 - \theta}{4} \| u_{n} \|^2 \Big)
        - \Big( \frac{1+\theta}{4} \| u_{n} \|^2 + \frac{1 - \theta}{4} \| u_{n-1} \|^2 \Big) \\   
        +& \big( a_2 u_{n+1} + a_1 u_n + a_0 u_{n-1}, a_2 u_{n+1} + a_1 u_n + a_0 u_{n-1} \big) \\
        +& \frac{\nu \Delta t \lambda_1}{2}
        \big( \beta_2 u_{n+1} + \beta_1 u_n + \beta_0 u_{n-1}, \beta_2 u_{n+1} + \beta_1 u_n + \beta_0 u_{n-1} \big) \\
        =&\frac{1+\theta}{4} \| u_{n+1} \|^2 + \Big(\frac{1 - \theta}{4} -  \frac{1+\theta}{4} \Big) \| u_{n} \|^2
        - \frac{1 - \theta}{4} \| u_{n-1} \|^2 \\
        +& a_2^2 \| u_{n+1} \|^2 + a_1^2 \| u_{n} \|^2 + a_0^2 \| u_{n-1} \|^2 + 2 a_2 a_1 ( u_{n+1}, u_{n} ) + 2 a_2 a_0 ( u_{n+1}, u_{n-1} ) \\
        +& 2 a_1 a_0 ( u_{n}, u_{n-1} ) \\
        +& \frac{\nu \Delta t \lambda_1}{2} \Big\{ \beta_2^2 \| u_{n+1} \|^2 + \beta_1^2 \| u_{n} \|^2 + \beta_0^2 \| u_{n-1} \|^2 + 2 \beta_2 \beta_1 ( u_{n+1}, u_{n} ) + 2 \beta_2 \beta_0 ( u_{n+1}, u_{n-1} ) \\
        &\quad \quad \qquad+ 2 \beta_1 \beta_0 ( u_{n}, u_{n-1} ) \Big\} \\
        =& \frac{1+\theta}{4} \| u_{n+1} \|^2 + \Big( \cancel{\frac{1}{4}} - \frac{\theta}{4} \cancel{- \frac{1}{4}} - \frac{\theta}{4} \Big) \| u_{n} \|^2
        - \frac{1 - \theta}{4} \| u_{n-1} \|^2 \\
        +& \Big(a_2^2 + \frac{\nu \Delta t \lambda_1 \beta_2^2}{2} \Big)
        \| u_{n+1} \|^2 + \Big(a_1^2 + \frac{\nu \Delta t \lambda_1 \beta_1^2}{2} \Big) \| u_{n} \|^2
        + \Big(a_0^2 + \frac{\nu \Delta t \lambda_1 \beta_0^2}{2} \Big)
        \| u_{n-1} \|^2 \\
        +& \Big( 2 a_2 a_1 + \nu \Delta t \lambda_1 \beta_2 \beta_1 \Big) ( u_{n+1}, u_{n} )
        + \Big( 2 a_2 a_0 + \nu \Delta t \lambda_1 \beta_2 \beta_0 \Big) ( u_{n+1}, u_{n-1} ) \\
        +& \Big( 2 a_1 a_0 + \nu \Delta t \lambda_1 \beta_1 \beta_0 \Big) ( u_{n}, u_{n-1} ) \\
        =& \Big( \frac{1+\theta}{4} + \frac{a_1^2}{4} + \frac{\nu \Delta t \lambda_1 \beta_2^2}{2} \Big) \| u_{n+1} \|^2 
        + \Big( a_1^2 + \frac{\nu \Delta t \lambda_1 \beta_1^2}{2} 
        - \frac{\theta}{2} \Big) \| u_{n} \|^2 \\
        +& \Big( \frac{a_1^2}{4} + \frac{\nu \Delta t \lambda_1 \beta_0^2}{2} - \frac{1 - \theta}{4} \Big) \| u_{n-1} \|^2
        + \Big[ 2 \big( - \frac{a_1}{2} \big) a_1 + \nu \Delta t \lambda_1 \beta_2 \beta_1 \Big] ( u_{n+1}, u_{n} ) \\
        +& \Big[ 2 \big( - \frac{a_1}{2} \big) \big( - \frac{a_1}{2} \big) + \nu \Delta t \lambda_1 \beta_2 \beta_0 \Big] ( u_{n+1}, u_{n-1}) 
        + \Big[ 2 a_1 \big( - \frac{a_1}{2} \big) + \nu \Delta t \lambda_1 \beta_1 \beta_0 \Big] ( u_{n}, u_{n-1} )
        \\
        =& \Big[ \frac{1+\theta}{4} + \frac{1}{4} \frac{\theta (1 - \theta)(1 + \theta)}{2} + \frac{\nu \Delta t \lambda_1 \beta_2^2}{2} \Big] \| u_{n+1} \|^2 
        + \Big[ \frac{\theta (1 - \theta)(1 + \theta)}{2} + \frac{\nu \Delta t \lambda_1 \beta_1^2}{2} - \frac{\theta}{2} \Big] \| u_{n} \|^2 \\ 
        +& \Big[ \frac{1}{4} \frac{\theta (1 - \theta)(1 + \theta)}{2} + \frac{\nu \Delta t \lambda_1 \beta_0^2}{2} - \frac{1 - \theta}{4} \Big] \| u_{n-1} \|^2
        + \Big( \nu \Delta t \lambda_1 \beta_2 \beta_1 - a_1^2 \Big) ( u_{n+1}, u_{n} ) \\
        +& \Big( \nu \Delta t \lambda_1 \beta_2 \beta_0 + \frac{a_1^2}{2}\Big) ( u_{n+1}, u_{n-1}) 
        + \Big( \nu \Delta t \lambda_1 \beta_1 \beta_0 - a_1^2 \Big) ( u_{n}, u_{n-1} ) \\
        =& \Big[ \frac{1+\theta}{4} \Big( \frac{2 + \theta - \theta^2}{2} \Big) + \frac{\nu \Delta t \lambda_1 \beta_2^2}{2} \Big] \| u_{n+1} \|^2 
        + \Big[ \frac{\theta}{2} \big( \cancel{1} - \theta^2 \cancel{- 1} \big) + \frac{\nu \Delta t \lambda_1 \beta_1^2}{2} \Big] \| u_{n} \|^2 \\
        +& \Big[ \frac{1 - \theta}{4} \Big( \frac{\theta + \theta^2 - 2 }{2} \Big) + \frac{\nu \Delta t \lambda_1 \beta_0^2}{2} \Big] \| u_{n-1} \|^2 + \Big( \nu \Delta t \lambda_1 \beta_2 \beta_1 - a_1^2 \Big) ( u_{n+1}, u_{n} ) \\ 
        +& \Big( \nu \Delta t \lambda_1 \beta_2 \beta_0 + \frac{a_1^2}{2}\Big) ( u_{n+1}, u_{n-1}) 
        + \Big( \nu \Delta t \lambda_1 \beta_1 \beta_0 - a_1^2 \Big) ( u_{n}, u_{n-1} ) \\
        =& \Big[ \frac{(1+\theta) (2 + \theta - \theta^2)}{8} + \frac{\nu \Delta t \lambda_1 \beta_2^2}{2} \Big] \| u_{n+1} \|^2 
        + \Big( \frac{\nu \Delta t \lambda_1 \beta_1^2}{2} - \frac{\theta^3}{2} \Big) \| u_{n} \|^2 \\ 
        +& \Big[ \frac{(1 - \theta)(\theta^2 + \theta - 2)}{8} + \frac{\nu \Delta t \lambda_1 \beta_0^2}{2} \Big] \| u_{n-1} \|^2 
        + \Big( \nu \Delta t \lambda_1 \beta_2 \beta_1 - a_1^2 \Big) ( u_{n+1}, u_{n} ) \\ 
        +& \Big( \nu \Delta t \lambda_1 \beta_2 \beta_0 + \frac{a_1^2}{2}\Big) ( u_{n+1}, u_{n-1}) 
        + \Big( \nu \Delta t \lambda_1 \beta_1 \beta_0 - a_1^2 \Big) ( u_{n}, u_{n-1} ) \\
    \end{align*}
	The right hand side of \eqref{eq:H-stab} takes the following form 
    \begin{align*}
        &(1 + \epsilon) 
		[u_{n+1}^{\top} \  u_n^{\top}]
        \begin{bmatrix}
            h_{11} \mathbb{I}_d & 0 \\
            0 & h_{22} \mathbb{I}_d
        \end{bmatrix}
        \begin{bmatrix}
            u_{n+1} \\ u_{n}
        \end{bmatrix}
        - 
		[u_{n}^{\top} \ u_{n-1}^{\top}]
        \begin{bmatrix}
            h_{11} \mathbb{I}_d & 0 \\
            0 & h_{22} \mathbb{I}_d
        \end{bmatrix}
        \begin{bmatrix}
            u_{n} \\ u_{n-1}
        \end{bmatrix} \\
        &+ \big\| a u_{n+1} + b u_{n} + c u_{n-1} \big\|^2 \\
        &= (1 + \epsilon) \big( h_{11} \| u_{n+1} \|^2 + h_{22} \| u_{n} \|^2 \big) - \big( h_{11} \| u_{n} \|^2 + h_{22} \| u_{n-1} \|^2 \big) \\
		&+ a^2 \| u_{n+1} \|^2
        + b^2 \| u_{n} \|^2 + c^2 \| u_{n-1} \|^2 \\
        &+ 2 ab (u_{n+1}, u_{n}) + 2 ac (u_{n+1}, u_{n-1})
        + 2 bc (u_{n}, u_{n-1}) \\
        &= \big[ (1 + \epsilon) h_{11} + a^2 \big] \| u_{n+1} \|^2
        + \big[ (1 + \epsilon) h_{22} - h_{11} + b^2 \big] \| u_{n} \|^2
        + \big( c^2 - h_{22} \big) \| u_{n-1} \|^2 \\
        &+ 2 ab  (u_{n+1}, u_{n}) + 2 ac (u_{n+1}, u_{n-1}) 
        + 2 bc  (u_{n}, u_{n-1}). 
    \end{align*}
	\normalcolor
\end{confidential}
		\begin{align} \label{eq:H-stab-system}
			\begin{cases} \displaystyle 
				(1 + \epsilon) h_{11} + a^2
				= \frac{(1+\theta) (2 + \theta - \theta^2)}{8} + \frac{\nu \Delta t \lambda_1 \beta_2^2}{2}, 
				& (\| u_{n+1} \|^2) \\ \displaystyle 
				(1 + \epsilon) h_{22} - h_{11} + b^2
				= \frac{\nu \Delta t \lambda_1 \beta_1^2}{2} - \frac{\theta^3}{2}, & (\| u_{n} \|^2) \\ \displaystyle  
				c^2 - h_{22} = \frac{(1 - \theta)(\theta^2 + \theta - 2)}{8} + \frac{\nu \Delta t \lambda_1 \beta_0^2}{2}, & (\| u_{n-1} \|^2) \\ \displaystyle 
				2 ab =\nu \Delta t \lambda_1 \beta_2 \beta_1- a_1^2, 
				& (u_{n+1}, u_{n}) \\ \displaystyle 
				2 ac =\nu \Delta t \lambda_1 \beta_2 \beta_0 + \frac{a_1^2}{2},
				& (u_{n+1}, u_{n-1}) \\ \displaystyle 
				2 bc = \nu \Delta t \lambda_1 \beta_1 \beta_0 - a_1^2,
				& (u_{n}, u_{n-1}). 
			\end{cases}
		\end{align}
		We add the above six equations together, and use the fact $\beta_2 + \beta_1 + \beta_0 = 1$ to have 
\begin{confidential}
	\color{darkblue}
	the left hand side becomes 
    \begin{align*}
        & \cancel{h_{11}} + \epsilon h_{11} + a^2
        \bcancel{+ h_{22}} + \epsilon h_{22} \cancel{- h_{11}} + b^2
        + c^2 \bcancel{- h_{22}} + 2 ab + 2 ac 
        + 2 bc  \\
        &= \epsilon h_{11} + \epsilon h_{22} + (a + b + c)^2, 
    \end{align*}
	and the right hand side becomes 
    \begin{align*}
        &\frac{(1+\theta) (2 + \theta - \theta^2)}{8} + \frac{\nu \Delta t \lambda_1 \beta_2^2}{2} 
        + \frac{\nu \Delta t \lambda_1 \beta_1^2}{2} - \frac{\theta^3}{2}
        + \frac{(1 - \theta)(\theta^2 + \theta - 2)}{8} + \frac{\nu \Delta t \lambda_1 \beta_0^2}{2} \\
        &+ \nu \Delta t \lambda_1 \beta_2 \beta_1 - a_1^2 
        + \nu \Delta t \lambda_1 \beta_2 \beta_0 + \frac{a_1^2}{2}
        + \nu \Delta t \lambda_1 \beta_1 \beta_0 - a_1^2 \\
        &= \frac{(1+\theta) (2 + \theta - \theta^2)}{8} - \frac{\theta^3}{2}
        + \frac{(1 - \theta)(\theta^2 + \theta - 2)}{8} 
        - a_1^2 + \frac{a_1^2}{2} - a_1^2  \\
        &+ \frac{\nu \Delta t \lambda_1 \beta_2^2}{2} 
        + \frac{\nu \Delta t \lambda_1 \beta_1^2}{2}
        + \frac{\nu \Delta t \lambda_1 \beta_0^2}{2} 
        + \nu \Delta t \lambda_1 \beta_2 \beta_1
        + \nu \Delta t \lambda_1 \beta_2 \beta_0
        + \nu \Delta t \lambda_1 \beta_1 \beta_0 \\
        &= \frac{ \cancel{2} + \theta \bcancel{- \theta^2} + 2 \theta \bcancel{+ \theta^2} - \theta^3 - 4 \theta^3 \bcancel{+ \theta^2} + \theta \cancel{- 2} - \theta^3 \bcancel{- \theta^2} + 2 \theta }{8} - \frac{3 a_1^2}{2} \\
        &+ \frac{\nu \Delta t \lambda_1}{2} 
        \big( \beta_2^2 + \beta_1^2 + \beta_0^2 + 2 \beta_2 \beta_1
        + 2 \beta_2 \beta_0 + 2 \beta_1 \beta_0 \big) \\
        &= \frac{6 \theta - 6 \theta^3}{8} - \frac{3}{2} \big( \frac{\theta - \theta^3}{2} \big) + \frac{\nu \Delta t \lambda_1}{2} 
        (\underbrace{\beta_2 + \beta_1 + \beta_0}_{=1})^2 \\
        &= \frac{3 \theta - 3 \theta^3}{4} - \frac{3 \theta - 3 \theta^3}{4} + \frac{\nu \Delta t \lambda_1}{2} \cdot 1 
        = \frac{\nu \Delta t \lambda_1}{2}. 
    \end{align*}
    where we use the fact that $\beta_2 + \beta_1 + \beta_0 = 1$ for the second to the last equality. 
	\normalcolor
\end{confidential}
		\begin{align*}
			\epsilon (h_{11} + h_{22}) + (a + b + c)^2
			= \frac{\nu \Delta t \lambda_1}{2}. 
		\end{align*}
		Now we assume that $\epsilon (h_{11} + h_{22})  = \mu \frac{\nu \Delta t \lambda_1}{2}$ for some $\mu \in (0,1)$ and obtain
		\begin{align*}
			(a + b + c)^2 = (1 - \mu) \frac{\nu \Delta t \lambda_1}{2}, 
			\quad \Leftrightarrow
			\quad b + (a + c) = \pm \sqrt{\frac{(1 - \mu) \nu \Delta t \lambda_1}{2}} := \pm \sqrt{x}.
		\end{align*}
		We consider the $'+'$ case $b + (a + c) = \sqrt{x}$ and add the fourth and sixth equations of system \eqref{eq:H-stab-system} together
\begin{confidential}
	\color{darkblue}
	\begin{align*}
        &2 ab + 2 bc 
        = \nu \Delta t \lambda_1 \beta_1 (\beta_2 + \beta_0)- 2 a_1^2, \\
        &2 b (a+c) = \nu \Delta t \lambda_1 \beta_1 (\beta_2 + \beta_0) - 2 a_1^2, \\
        &b (a+c) = \frac{\nu \Delta t \lambda_1 \beta_1 (\beta_2 + \beta_0)}{2} - a_1^2. 
    \end{align*}
	\begin{align*}
		b (a+c) &= \frac{\nu \Delta t \lambda_1 \beta_1 (\beta_2 + \beta_0)}{2} - a_1^2 \\
		&= \frac{\nu \Delta t \lambda_1}{2} \big(\frac{\theta^2 }{2} \big) \Big[ \frac{1}{4} (2 + \theta - \theta^2)  + \frac{1}{4} (2 - \theta - \theta^2) \Big]
		- \frac{\theta (1 - \theta^2)}{2}  \\
		&= \frac{\nu \Delta t \lambda_1}{2} \big(\frac{\theta^2 }{2} \big)
		\big( \frac{1}{2} \cancel{+ \frac{\theta}{4}} - \frac{\theta^2}{4} 
		+ \frac{1}{2} \cancel{- \frac{\theta}{4}} - \frac{\theta^2}{4} \big) - \frac{\theta (1 - \theta^2)}{2} \\
		&= \frac{\nu \Delta t \lambda_1 \theta^2}{4} \big(1 - \frac{\theta^2}{2} \big)
		- \frac{\theta (1 - \theta^2)}{2}  \\
		&= \frac{\nu \Delta t \lambda_1 \theta^2 (2 - \theta^2)}{8}
		- \frac{\theta (1 - \theta^2)}{2}. 
	\end{align*}
	\normalcolor
\end{confidential}
		\begin{align*}
			b (a+c) = \frac{\nu \Delta t \lambda_1 \beta_1 (\beta_2 + \beta_0)}{2} - a_1^2
			= \frac{\nu \Delta t \lambda_1 \theta^2 (2 - \theta^2)}{8}
		- \frac{\theta (1 - \theta^2)}{2} :=E.
		\end{align*}
		Hence $(a + c)$ and $b$ are two roots of the following quadratic equation
		\begin{align*}
			X^2 - \sqrt{x} X + E= 0. 
		\end{align*}
		Hence we need $\mu \in (0,1)$ and $\Delta t > 0$ such that the discriminant of the above quadratic equation $\mathcal{D}$ satisfies $\mathcal{D} = x - 4E \geq 0$. 
\begin{confidential}
	\color{darkblue}
	\begin{align*}
        \mathcal{D} = x - 4 E
		&= \frac{(1 - \mu) \nu \Delta t \lambda_1}{2}
        - 4 \Big( \frac{\nu \Delta t \lambda_1 \theta^2 (2 - \theta^2)}{8}
        - \frac{\theta (1 - \theta^2)}{2} \Big) \\
        &= \frac{(1 - \mu) \nu \Delta t \lambda_1}{2}
        - \frac{\nu \Delta t \lambda_1 \theta^2 (2 - \theta^2)}{2}
        + 2 \theta (1 - \theta^2) \geq 0.
    \end{align*}
	\normalcolor
\end{confidential}
		Then we consider the case  
		\begin{align}
			b = \frac{1}{2} \Big( \sqrt{x} - \sqrt{\mathcal{D}} \Big), \quad 
			a + c = \frac{1}{2} \Big( \sqrt{x} + \sqrt{\mathcal{D}} \Big),
			\label{eq:X-roots}
		\end{align}
		Then we combine \eqref{eq:X-roots} and 
		the fifth equation of \eqref{eq:H-stab-system}, and obtain that $a$, $c$ are two roots of the following quadratic equation about $Y$ with the discriminant $\mathscr{D}$ 
		\begin{align}
        &Y^2 - \frac{1}{2} \big( \sqrt{x} + \sqrt{\mathcal{D}}) Y 
        + \Big( \frac{\nu \Delta t \lambda_1 \beta_2 \beta_0}{2} + \frac{a_1^2}{4} \Big) = 0, \notag \\  
        &\mathscr{D} = \frac{1}{4} \big( \sqrt{x} + \sqrt{\mathcal{D}})^2 
        - 4 \Big( \frac{\nu \Delta t \lambda_1 \beta_2 \beta_0}{2} + \frac{a_1^2}{4} \Big).  \label{eq:Y-quad-D}
    \end{align}
	We consider the following case 
    \begin{align*}
        c = \frac{1}{2} \Big[ \frac{1}{2} \big( \sqrt{x} + \sqrt{\mathcal{D}} \big) + \sqrt{\mathscr{D}} \Big], \quad
        a = \frac{1}{2} \Big[ \frac{1}{2} \big( \sqrt{x} + \sqrt{\mathcal{D}} \big) - \sqrt{\mathscr{D}} \Big], \quad
        b = \frac{1}{2} \big( \sqrt{x} - \sqrt{\mathcal{D}} \big).
    \end{align*}
\begin{confidential}
	\color{darkblue}
	\begin{align*}
        ac = \frac{\nu \Delta t \lambda_1 \beta_2 \beta_0}{2} + \frac{a_1^2}{4} 
        &= \frac{\nu \Delta t \lambda_1}{2} \Big[ \frac{1}{4} (2 + \theta - \theta^2) \Big] \Big[ \frac{1}{4} (2 - \theta - \theta^2) \Big]
        + \frac{1}{4} \frac{\theta (1 - \theta^2)}{2} \\
        &= \frac{\nu \Delta t \lambda_1}{32} (2 + \theta - \theta^2)
        (2 - \theta - \theta^2) + \frac{\theta (1 - \theta^2)}{8}.
    \end{align*}
	\normalcolor
\end{confidential}
		By algebraic calculation, 
		\begin{align}
			\sqrt{ \mathscr{D}}
			=& \sqrt{\frac{1}{2} x + \frac{1}{2} \sqrt{x} \sqrt{D} 
        	- \Big[ \underbrace{\frac{\nu \Delta t \lambda_1 \beta_1 (\beta_2 + \beta_0 )}{2} + 2 \nu \Delta t \lambda_1 \beta_2 \beta_0}_{= F} \Big] }. \label{eq:sqrt-D-scr}
		\end{align}
\begin{confidential}
	\color{darkblue}
	\begin{align*}
        \sqrt{ \mathscr{D}} 
        =& \sqrt{ \frac{1}{4} \big( \sqrt{x} + \sqrt{\mathcal{D}})^2 
        - 4 \Big( \frac{\nu \Delta t \lambda_1 \beta_2 \beta_0}{2} + \frac{a_1^2}{4} \Big) } \notag \\
        =& \sqrt{ \frac{1}{4} \big( x + \mathcal{D} + 2 \sqrt{x} \sqrt{D} \big) - 4 \Big( \frac{\nu \Delta t \lambda_1 \beta_2 \beta_0}{2} + \frac{a_1^2}{4} \Big) } \notag \\
        =& \sqrt{ \frac{1}{4} x + \frac{1}{4} (x - 4E)  + \frac{1}{2} \sqrt{x} \sqrt{D} - 4 \Big( \frac{\nu \Delta t \lambda_1 \beta_2 \beta_0}{2} + \frac{a_1^2}{4} \Big) } \notag \\
        =& \sqrt{ \frac{1}{2} x - E  + \frac{1}{2} \sqrt{x} \sqrt{D} - 4 \Big( \frac{\nu \Delta t \lambda_1 \beta_2 \beta_0}{2} + \frac{a_1^2}{4} \Big) } \notag \\
        =& \sqrt{\frac{1}{2} x + \frac{1}{2} \sqrt{x} \sqrt{D} 
        - \Big( \frac{\nu \Delta t \lambda_1 \beta_1 (\beta_2 + \beta_0 )}{2} - a_1^2 \Big) - 4 \Big( \frac{\nu \Delta t \lambda_1 \beta_2 \beta_0}{2} + \frac{a_1^2}{4} \Big) } \notag \\
        =& \sqrt{\frac{1}{2} x + \frac{1}{2} \sqrt{x} \sqrt{D} 
        - \frac{\nu \Delta t \lambda_1 \beta_1 (\beta_2 + \beta_0 )}{2} \cancel{+ a_1^2} - 2 \nu \Delta t \lambda_1 \beta_2 \beta_0 \cancel{- a_1^2} } \notag \\
        =& \sqrt{\frac{1}{2} x + \frac{1}{2} \sqrt{x} \sqrt{D} 
        - \Big[ \underbrace{\frac{\nu \Delta t \lambda_1 \beta_1 (\beta_2 + \beta_0 )}{2} + 2 \nu \Delta t \lambda_1 \beta_2 \beta_0}_{= F} \Big] }. 
    \end{align*}
	\normalcolor
\end{confidential}
		We use the fourth equation of \eqref{eq:H-stab-system} again and achieve
		\begin{align*}
			\sqrt{\frac{1}{2} x + \frac{1}{2} \sqrt{x} \sqrt{D} - F} \big( \sqrt{x} - \sqrt{\mathcal{D}} \big)
        	= \nu \Delta t \lambda_1 \beta_1 ( \beta_0 - \beta_2)
		\end{align*}
		We square the above equation and obtain a linear function of $x$ (terms of $x^2$ are cancelled)
		\begin{align*}
			&\bigg\{ 4 FE - \Big[\nu \Delta t \lambda_1 \beta_1 ( \beta_2 - \beta_0) \Big]^2 \bigg\}^2
        	= 4 (E - F) x \bigg\{ \Big[\nu \Delta t \lambda_1 \beta_1 ( \beta_2 - \beta_0) \Big]^2 - 4E^2 \bigg\}.
		\end{align*}
\begin{confidential}
	\color{darkblue}
	\begin{align*}
		&2 ab = \nu \Delta t \lambda_1 \beta_2 \beta_1 - a_1^2, \\
        &2 \frac{1}{2} \Big[ \frac{1}{2} \big( \sqrt{x} + \sqrt{\mathcal{D}} \big) - \sqrt{\mathscr{D}} \Big] 
        \frac{1}{2} \big( \sqrt{x} - \sqrt{\mathcal{D}} \big)
        = \nu \Delta t \lambda_1 \beta_2 \beta_1 - a_1^2 \\
        & \Big[ \frac{1}{2} \big( \sqrt{x} + \sqrt{\mathcal{D}} \big) - \sqrt{\mathscr{D}} \Big] \big( \sqrt{x} - \sqrt{\mathcal{D}} \big)
        = 2 \Big( \nu \Delta t \lambda_1 \beta_2 \beta_1 - a_1^2 \Big) \\
        & \frac{1}{2} \big( \sqrt{x} + \sqrt{\mathcal{D}} \big) 
        \big( \sqrt{x} - \sqrt{\mathcal{D}} \big) 
        - \sqrt{\mathscr{D}} \big( \sqrt{x} - \sqrt{\mathcal{D}} \big)
        = 2 \nu \Delta t \lambda_1 \beta_2 \beta_1 - 2 a_1^2 \\
        &\frac{1}{2} \big( x - \mathcal{D} \big) - \sqrt{\mathscr{D}} \big( \sqrt{x} - \sqrt{\mathcal{D}} \big) 
        = 2 \nu \Delta t \lambda_1 \beta_2 \beta_1 - 2 a_1^2  \\
        &\frac{1}{2} 4 E
        - \sqrt{\mathscr{D}} \big( \sqrt{x} - \sqrt{\mathcal{D}} \big)
        = 2 \nu \Delta t \lambda_1 \beta_2 \beta_1 - 2 a_1^2  \\
        &2 \Big( \frac{\nu \Delta t \lambda_1 \beta_1 (\beta_2 + \beta_0 )}{2} - a_1^2 \Big) - \sqrt{\mathscr{D}} \big( \sqrt{x} - \sqrt{\mathcal{D}} \big)
        = 2 \nu \Delta t \lambda_1 \beta_2 \beta_1 - 2 a_1^2 \\
        &\nu \Delta t \lambda_1 \beta_1 \beta_2 + 
        \nu \Delta t \lambda_1 \beta_1 \beta_0 \cancel{- 2 a_1^2} - \sqrt{\mathscr{D}} \big( \sqrt{x} - \sqrt{\mathcal{D}} \big)
        = 2 \nu \Delta t \lambda_1 \beta_2 \beta_1 \cancel{- 2 a_1^2}  \\ 
        & \sqrt{\mathscr{D}} \big( \sqrt{x} - \sqrt{\mathcal{D}} \big) 
        = \nu \Delta t \lambda_1 \beta_1 \beta_2 + 
        \nu \Delta t \lambda_1 \beta_1 \beta_0 - 2 \nu \Delta t \lambda_1 \beta_2 \beta_1 
        = \nu \Delta t \lambda_1 \beta_1 \beta_0 - \nu \Delta t \lambda_1 \beta_2 \beta_1 \\
        & \sqrt{\mathscr{D}} \big( \sqrt{x} - \sqrt{\mathcal{D}} \big)
        = \nu \Delta t \lambda_1 \beta_1 ( \beta_0 - \beta_2)
    \end{align*}
	\begin{align*}
        &\sqrt{\frac{1}{2} x + \frac{1}{2} \sqrt{x} \sqrt{\mathcal{D}} - F } \big( \sqrt{x} - \sqrt{\mathcal{D}} \big) = \nu \Delta t \lambda_1 \beta_1 ( \beta_0 - \beta_2) \\
        &\Big( \frac{1}{2} x + \frac{1}{2} \sqrt{x} \sqrt{\mathcal{D}} - F \Big)
        \big( x + \mathcal{D} - 2\sqrt{x} \sqrt{\mathcal{D}}\big) 
        = \Big[ \nu \Delta t \lambda_1 \beta_1 ( \beta_2 - \beta_0) \Big]^2 \\
        &\Big( \frac{1}{2} x + \frac{1}{2} \sqrt{x} \sqrt{\mathcal{D}} - F \Big)
        \big( \mathcal{D} + 4E + \mathcal{D} - 2\sqrt{x} \sqrt{\mathcal{D}} \big) 
        = \Big[\nu \Delta t \lambda_1 \beta_1 ( \beta_2 - \beta_0) \Big]^2 \\
        &\Big( x + \sqrt{x} \sqrt{\mathcal{D}} - 2 F \Big)
        \big( \mathcal{D} + 2E - \sqrt{x} \sqrt{\mathcal{D}} \big) 
        = \Big[\nu \Delta t \lambda_1 \beta_1 ( \beta_2 - \beta_0) \Big]^2 \\
    \end{align*}
    The LHS is 
    \begin{align*}
        &\Big( x + \sqrt{x} \sqrt{\mathcal{D}} - 2 F \Big)
        \big( \mathcal{D} + 2E - \sqrt{x} \sqrt{\mathcal{D}} \big) \\
        =& \cancel{x \mathcal{D}} + 2 xE - x \sqrt{x} \sqrt{\mathcal{D}} 
        + \sqrt{x} \mathcal{D} \sqrt{\mathcal{D}} + 2E \sqrt{x} \sqrt{\mathcal{D}} \cancel{- x \mathcal{D}} - 2F \mathcal{D} - 4 FE 
        + 2F \sqrt{x} \sqrt{\mathcal{D}} \\
        =& 2 xE - (x - \mathcal{D}) \sqrt{x} \sqrt{\mathcal{D}} 
        + 2E \sqrt{x} \sqrt{\mathcal{D}} - 2F \mathcal{D} + 2F \sqrt{x} \sqrt{\mathcal{D}} - 4 FE  \\
        =&2 xE - (4 E) \sqrt{x} \sqrt{\mathcal{D}} + 2E \sqrt{x} \sqrt{\mathcal{D}} - 2F \mathcal{D} + 2F \sqrt{x} \sqrt{\mathcal{D}} - 4 FE  \\
        =& 2 xE - 2 F \mathcal{D} - 2E \sqrt{x} \sqrt{\mathcal{D}}
        + 2F \sqrt{x} \sqrt{\mathcal{D}} - 4 FE \\
        =& 2 xE - 2 F(x - 4E) - 2 (E - F) \sqrt{x} \sqrt{\mathcal{D}} - 4FE \\
        =& 2 xE - 2 xF + 8 FE - 2 (E - F) \sqrt{x} \sqrt{\mathcal{D}} - 4FE \\ 
        =& 2 (E - F) x - 2 (E - F) \sqrt{x} \sqrt{\mathcal{D}} + 4 FE.
    \end{align*}
	\begin{align*}
        &2 (E - F) x - 2 (E - F) \sqrt{x} \sqrt{\mathcal{D}} + 4 FE
        = \Big[\nu \Delta t \lambda_1 \beta_1 ( \beta_2 - \beta_0) \Big]^2 \\
        &2 (E - F) x + 4 FE - \Big[\nu \Delta t \lambda_1 \beta_1 ( \beta_2 - \beta_0) \Big]^2 
        = 2 (E - F) \sqrt{x} \sqrt{\mathcal{D}} \\
        &\bigg\{ 2 (E - F) x + 4 FE - \Big[\nu \Delta t \lambda_1 \beta_1 ( \beta_2 - \beta_0) \Big]^2 \bigg\}^2
        = 4 (E - F)^2 x \mathcal{D} \\
        & \cancel{4 (E - F)^2 x^2} + 4 (E - F) x \bigg\{ 4 FE - \Big[\nu \Delta t \lambda_1 \beta_1 ( \beta_2 - \beta_0) \Big]^2 \bigg\} \\
        &+ \bigg\{ 4 FE - \Big[\nu \Delta t \lambda_1 \beta_1 ( \beta_2 - \beta_0) \Big]^2 \bigg\}^2 
        = 4 (E - F)^2 x \big( x - 4E \big)
        = \cancel{4 (E - F)^2 x^2} - 16 E (E - F)^2 x \\
        &\bigg\{ 4 FE - \Big[\nu \Delta t \lambda_1 \beta_1 ( \beta_2 - \beta_0) \Big]^2 \bigg\}^2
        = 4 (E - F) x \bigg\{ \Big[\nu \Delta t \lambda_1 \beta_1 ( \beta_2 - \beta_0) \Big]^2 - 4FE \bigg\}
        - 16 E (E - F)^2 x \\
        &\bigg\{ 4 FE - \Big[\nu \Delta t \lambda_1 \beta_1 ( \beta_2 - \beta_0) \Big]^2 \bigg\}^2
        = 4 (E - F) x \bigg\{ \Big[\nu \Delta t \lambda_1 \beta_1 ( \beta_2 - \beta_0) \Big]^2 - 4FE - 4E (E - F) \bigg\} \\
        &\bigg\{ 4 FE - \Big[\nu \Delta t \lambda_1 \beta_1 ( \beta_2 - \beta_0) \Big]^2 \bigg\}^2
        = 4 (E - F) x \bigg\{ \Big[\nu \Delta t \lambda_1 \beta_1 ( \beta_2 - \beta_0) \Big]^2 \cancel{- 4FE} - 4E^2 \cancel{+ 4 EF} \bigg\} \\
        &\bigg\{ 4 FE - \Big[\nu \Delta t \lambda_1 \beta_1 ( \beta_2 - \beta_0) \Big]^2 \bigg\}^2
        = 4 (E - F) x \Big[ \nu \Delta t \lambda_1 \beta_1 ( \beta_2 - \beta_0) - 2 E \Big]
        \Big[ \nu \Delta t \lambda_1 \beta_1 ( \beta_2 - \beta_0) + 2 E \Big] 
    \end{align*}
	\begin{align*}
        &E - F \\
        =&\frac{\nu \Delta t \lambda_1 \beta_1 ( \beta_2 + \beta_0)}{2}
        - a_1^2 - \Big[ \frac{\nu \Delta t \lambda_1 \beta_1 (\beta_2 + \beta_0 )}{2} + 2 \nu \Delta t \lambda_1 \beta_2 \beta_0 \Big] \\
        =& \cancel{\frac{\nu \Delta t \lambda_1 \beta_1 ( \beta_2 + \beta_0)}{2}}
        - a_1^2 \cancel{- \frac{\nu \Delta t \lambda_1 \beta_1 (\beta_2 + \beta_0 )}{2}} - 2 \nu \Delta t \lambda_1 \beta_2 \beta_0 \\
        =& - a_1^2 - 2 \nu \Delta t \lambda_1 \beta_2 \beta_0, \\
		\\
        &\nu \Delta t \lambda_1 \beta_1 ( \beta_2 - \beta_0)- 2 E \\
        =& \nu \Delta t \lambda_1 \beta_1 ( \beta_2 - \beta_0)
        - 2 \Big[ \frac{\nu \Delta t \lambda_1 \beta_1 ( \beta_2 + \beta_0)}{2} - a_1^2  \Big] \\
        =& \nu \Delta t \lambda_1 \beta_1 ( \beta_2 - \beta_0)
        - \nu \Delta t \lambda_1 \beta_1 ( \beta_2 + \beta_0) + 2 a_1^2 \\
        =& \nu \Delta t \lambda_1 \beta_1 ( \cancel{\beta_2} - \beta_0 \cancel{- \beta_2} - \beta_0) + 2 a_1^2 
        =- 2 \nu \Delta t \lambda_1 \beta_1 \beta_0 + 2 a_1^2 \\
		\\ 
        &\nu \Delta t \lambda_1 \beta_1 ( \beta_2 - \beta_0) + 2 E\\
        =&\nu \Delta t \lambda_1 \beta_1 ( \beta_2 - \beta_0) +
        2 \Big[ \frac{\nu \Delta t \lambda_1 \beta_1 ( \beta_2 + \beta_0)}{2} - a_1^2 \Big] \\
        =&\nu \Delta t \lambda_1 \beta_1 ( \beta_2 - \beta_0)
        + \nu \Delta t \lambda_1 \beta_1 ( \beta_2 + \beta_0) 
        - 2 a_1^2 \\
        =& \nu \Delta t \lambda_1 \beta_1 ( \beta_2 \cancel{- \beta_0} + \beta_2 \cancel{+ \beta_0}) - 2 a_1^2 
        = 2 \nu \Delta t \lambda_1 \beta_1 \beta_2 - 2 a_1^2.
    \end{align*}
	\begin{align*}
        &4 (E - F) \bigg\{ \Big[\frac{ \nu \Delta t \beta_1 ( \beta_2 - \beta_0)}{C_{p}^{2}} \Big]^2 - 4E^2 \bigg\} \\
        =& - 4 \Big( a_1^2 + 2 \nu \Delta t \lambda_1 \beta_2 \beta_0 \Big) \Big( 2 a_1^2 - 2 \nu \Delta t \lambda_1 \beta_1 \beta_0 \Big) \Big( 2 \nu \Delta t \lambda_1 \beta_1 \beta_2 - 2 a_1^2 \Big) \\
        =& 16 \Big( \underbrace{a_1^2 + 2 \nu \Delta t \lambda_1 \beta_2 \beta_0}_{ > 0} \Big) \Big( a_1^2 - \nu \Delta t \lambda_1 \beta_1 \beta_0 \Big) \Big( a_1^2 - \nu \Delta t \lambda_1 \beta_1 \beta_2 \Big)
    \end{align*}
	\normalcolor
\end{confidential}
		The requirement $x \geq 0$ yields 
		\begin{align*}
			&4 (E - F) \bigg\{ \Big[\nu \Delta t \lambda_1 \beta_1 ( \beta_2 - \beta_0) \Big]^2 - 4E^2 \bigg\} \\
			=& 16 \Big( \underbrace{a_1^2 + 2 \nu \Delta t \lambda_1 \beta_2 \beta_0}_{ > 0} \Big) \Big( a_1^2 - \nu \Delta t \lambda_1 \beta_1 \beta_0 \Big) \Big( a_1^2 - \nu \Delta t \lambda_1 \beta_1 \beta_2 \Big) > 0. 
		\end{align*}
\begin{confidential}
	\color{darkblue}
	\begin{align*}
        &\Big( a_1^2 - \nu \Delta t \lambda_1 \beta_1 \beta_0 \Big) \Big( a_1^2 - \nu \Delta t \lambda_1 \beta_1 \beta_2 \Big)
        >0 
    \end{align*}
	We consider the case $\theta \in (0,1)$, which implies $\beta_2 > \beta_0$ and 
    \begin{align*}
        0 < \nu \Delta t \lambda_1 \beta_1 \beta_0
        < \nu \Delta t \lambda_1 \beta_1 \beta_2.
    \end{align*}
	Thus 
	\begin{align*}
		&a_1^2 < \nu \Delta t \lambda_1 \beta_1 \beta_0, \qquad \text{or } \qquad
		a_1^2 > \nu \Delta t \lambda_1 \beta_1 \beta_2 
	\end{align*}
	Since we can set $\Delta t$ to be small enough for accuracy, we only works for the second inequality, 
    \begin{align*}
        &\frac{\theta (1 - \theta^2)}{2} > \nu \Delta t \lambda_1 \beta_1 \beta_2 = \nu \Delta t \lambda_1
        \frac{1}{2}\big( \theta^2 \big) \frac{1}{4}\big( 2 + \theta - \theta^2 \big) \\
        &\frac{\theta (1 - \theta)(1 + \theta)}{2} 
        > \frac{ \nu \Delta t \lambda_1}{8} \theta^2 (2 - \theta) (1 + \theta) \\
        & (1 - \theta) > \frac{ \nu \Delta t \lambda_1}{4} \theta (2 - \theta), \quad \Leftrightarrow \quad
        \Delta t < \frac{4 (1 - \theta)}{ \nu \lambda_1 \theta (2 - \theta)}.
    \end{align*}
	\normalcolor
\end{confidential}
		We need the time step restriction 
		\begin{align}
			\label{eq:dt-cond-1}
			\Delta t < \frac{4 (1 - \theta)}{ \nu \lambda_1 \theta (2 - \theta)},
		\end{align}
		and solve for $x$
		\begin{align}
			x = \frac{\bigg\{ 4 FE - \Big[\nu \Delta t \lambda_1 \beta_1 ( \beta_2 - \beta_0) \Big]^2 \bigg\}^2}{4 (E - F) \bigg\{\Big[ \nu \Delta t \lambda_1 \beta_1 ( \beta_2 - \beta_0) \Big]^2 - 4E^2 \bigg\}} \geq 0. 
			\label{eq:x-sol}
		\end{align} 
		From \eqref{eq:x-sol}, $\mathcal{D} = x - 4E >0$ as long as $E<0$, which is ensured by the time step restriction in \eqref{eq:dt-cond-1}. 
\begin{confidential}
	\color{darkblue}
	We check that the time step restriction in \eqref{eq:dt-cond-1} ensures $E < 0$. 
    \begin{align*}
        &E = \frac{\nu \Delta t \lambda_1 \beta_1 ( \beta_2 + \beta_0)}{2}
        - a_1^2 < 0, \\
        &\frac{\nu \Delta t \lambda_1}{2} 
        \big( \frac{1}{2} \theta^2 \big) \Big[ \frac{1}{4} \big( 2 + \theta - \theta^2 \big) + \frac{1}{4} \big(2 - \theta - \theta^2 \big) \Big]
        < \frac{\theta (1 - \theta^2)}{2}, \\
        &\frac{\nu \Delta t \lambda_1}{16} \theta^2 \big( 2 \cancel{+ \theta} - \theta^2 + 2 \cancel{- \theta} - \theta^2 \big) < \frac{\theta (1 - \theta)(1 + \theta)}{2} \\
        &\frac{\nu \Delta t \lambda_1}{8} \theta (2 - \theta^2) 
        < \frac{(1 - \theta)(1 + \theta)}{2} \\
        &\frac{\nu \Delta t \lambda_1}{4} \theta (2 - \theta^2) 
        < (1 - \theta)(1 + \theta) \\
        & \Delta t < \frac{4 (1 - \theta)(1 + \theta) }{\nu \lambda_1 \theta (2 - \theta^2) }.
    \end{align*}
	We will show: for $\theta \in (0,1)$
    \begin{align*}
        &\frac{4 (1 - \theta)(1 + \theta) }{\nu \lambda_1 \theta (2 - \theta^2) } > \frac{4 (1 - \theta)}{ \nu \lambda_1 \theta (2 - \theta) } \ \Leftrightarrow \ 
        \frac{1 + \theta}{2 - \theta^2} > \frac{1}{2 - \theta} 
        \ \Leftrightarrow \
        (1 + \theta) (2 - \theta) > 2 - \theta^2 \\
        &\bcancel{2} - \theta + 2 \theta \cancel{- \theta^2} > \bcancel{2} \cancel{- \theta^2}, \ \Leftrightarrow \ 
        \theta > 0. 
    \end{align*}
	Hence the second time step restriction is absorbed by the restriction in \eqref{eq:dt-cond-1}.
	By \eqref{eq:sqrt-D-scr}
    \begin{align*}
        \mathscr{D} = \frac{1}{2} x + \frac{1}{2} \sqrt{x} \sqrt{\mathcal{D}} - \Big[ \underbrace{\frac{\nu \Delta t \beta_1 (\beta_2 + \beta_0 )}{2 C_{p}^{2}} + \frac{2 \nu \Delta t \beta_2 \beta_0}{ C_{p}^{2}} }_{= F} \Big] = \frac{1}{2} x + \frac{1}{2} \sqrt{x} \sqrt{\mathcal{D}} - F > 0.
    \end{align*}
	\normalcolor
\end{confidential}
		By the facts that $E < 0$ and $F >0$, we can show that $\mathscr{D} >0$ in \eqref{eq:sqrt-D-scr}. 
\begin{confidential}
	\color{darkblue}
	By \eqref{eq:sqrt-D-scr}, to show 
    \begin{align*}
        \mathscr{D} = \frac{1}{2} x + \frac{1}{2} \sqrt{x} \sqrt{\mathcal{D}} - F > 0, 
    \end{align*}
	it suffices to show 
    \begin{align*}
        &\frac{1}{2} \sqrt{x} \sqrt{\mathcal{D}} > F - \frac{1}{2} x 
        \ \Leftrightarrow \ \sqrt{x} \sqrt{\mathcal{D}} > 2 F - x 
        \ \Leftrightarrow \ x \mathcal{D} > (2 F - x)^2 \\
        &x (x - 4E) > 4F^2 - 4Fx + x^2  \ \Leftrightarrow \ 
        \cancel{x^2} - 4xE > 4 F^2 - 4xF \cancel{+ x^2} \ \Leftrightarrow \ 
        \cancel{4}x (F - E) > \cancel{4} F^2. 
    \end{align*}
	By \eqref{eq:x-sol}, we have 
    \begin{align*}
        &\cancel{(F - E)} \frac{\bigg\{ 4 FE - \Big[ \nu \Delta t \lambda_1 \beta_1 ( \beta_2 - \beta_0) \Big]^2 \bigg\}^2}{4 \cancel{(F-E)} \bigg\{ \underbrace{4E^2 - \Big[\nu \Delta t \lambda_1 \beta_1 ( \beta_2 - \beta_0) \Big]^2}_{>0\ \text{by previous calculation and time step limits}} \bigg\}} > F^2, \\
        &\bigg\{ 4 FE - \Big[ \nu \Delta t \lambda_1 \beta_1 ( \beta_2 - \beta_0) \Big]^2 \bigg\}^2 
        > 4 F^2 \bigg\{ 4E^2 - \Big[\nu \Delta t \lambda_1 \beta_1 ( \beta_2 - \beta_0) \Big]^2 \bigg\} \\
        &\cancel{16E^2F^2} - 8 EF \Big[ \nu \Delta t \lambda_1 \beta_1 ( \beta_2 - \beta_0) \Big]^2 + \Big[\nu \Delta t \lambda_1 \beta_1 ( \beta_2 - \beta_0) \Big]^4 
        > \cancel{16 E^2 F^2} - 4 F^2 \Big[\nu \Delta t \lambda_1 \beta_1 ( \beta_2 - \beta_0) \Big]^2 \\
        & 4 F^2 \Big[\nu \Delta t \lambda_1 \beta_1 ( \beta_2 - \beta_0) \Big]^2 - 8 E F \Big[\nu \Delta t \lambda_1 \beta_1 ( \beta_2 - \beta_0) \Big]^2 + \Big[ \nu \Delta t \lambda_1 \beta_1 ( \beta_2 - \beta_0) \Big]^4 > 0,
    \end{align*}
	and 
	\begin{align*}
        F &= \frac{\nu \Delta t \lambda_1 \beta_1 (\beta_2 + \beta_0 )}{2} + 2 \nu \Delta t \lambda_1 \beta_2 \beta_0 \\
        &= \frac{\nu \Delta t \lambda_1}{2} \big( \frac{1}{2}\theta^2 \big) \Big[ \frac{1}{4} \big(2 + \theta - \theta^2 \big) + \frac{1}{4} \big( 2 - \theta - \theta^2 \big) \Big]
        + 2 \nu \Delta t \lambda_1 \cdot \frac{1}{4} \big(2 + \theta - \theta^2 \big) \cdot \frac{1}{4} \big( 2 - \theta - \theta^2 \big) \\
        &= \frac{\nu \Delta t \lambda_1 \theta^2 }{16} \big(2 \cancel{+ \theta} - \theta^2 + 2 \cancel{- \theta} - \theta^2 \big)
        + \frac{\nu \Delta t \lambda_1}{8} (2 - \theta)(1 + \theta) (2 + \theta)(1 - \theta) \\
        &= \frac{\nu \Delta t \lambda_1 \theta^2 (2 - \theta^2)}{8} 
        + \frac{\nu \Delta t \lambda_1}{8} (2 - \theta)(1 + \theta) (2 + \theta)(1 - \theta) > 0.
    \end{align*}
	\normalcolor
\end{confidential}
		We have solved $a$, $b$, $c$ in the numerical dissipation term in \eqref{eq:H-stab}. Now we employ the following new time step restriction  
		\begin{align}
			\Delta t < \frac{2 (1 - \theta)}{\nu \lambda_1 },  
			\label{eq:dt-cond-2} 
		\end{align}
		which is more restrictive than the limit in \eqref{eq:dt-cond-1}. 
\begin{confidential}
	\color{darkblue}
	\begin{align*}
        &\frac{2 \bcancel{(1 - \theta)}}{\bcancel{\nu} \cancel{\lambda_1}} < \frac{4 \bcancel{(1 - \theta)}}{ \bcancel{\nu} \cancel{\lambda_1} \theta (2 - \theta) } \ \Leftrightarrow \         
        1 < \frac{2}{\theta (2 - \theta)} \ \Leftrightarrow \    
        \theta (2 - \theta) < 2 \\
        &2\theta - \theta^2 < 2 \ \Leftrightarrow \        
        \theta^2 - 2 \theta + 2 > 0 \ \Leftrightarrow \   
        (\theta - 1)^2 + 1 > 0. 
    \end{align*}
	\normalcolor
\end{confidential}
		By the third equation of \eqref{eq:H-stab-system} and the updated time step restriction in \eqref{eq:dt-cond-2}
		\begin{align}
			h_{22} 
			=& c^2 + \frac{(1 - \theta)(2 - \theta - \theta^2)}{8}
			- \frac{\nu \Delta t \lambda_1 \beta_0^2}{2} \notag \\
        	>& c^2 + \frac{(1 - \theta)(2 - \theta - \theta^2)}{8}
        	- (1- \theta) \Big[ \frac{1}{4} \big( 2 - \theta - \theta^2 \big) \Big]^2 
			\label{eq:h22-sol} \\
			=& c^2 + \frac{ \theta (1 - \theta)^2 (1 + \theta)(2 + \theta)}{16} 
			\geq \frac{ \theta (1 - \theta)^2 (1 + \theta)(2 + \theta)}{16}
			>0. \notag 
		\end{align}
\begin{confidential}
	\color{darkblue}
	\begin{align*}
        h_{22} 
        =& c^2 + \frac{(1 - \theta)(2 - \theta - \theta^2)}{8} - \frac{\nu \Delta t \lambda_1 \beta_0^2}{2} \notag \\
        =& c^2 + \frac{(1 - \theta)(2 - \theta - \theta^2)}{8} - \frac{\nu \Delta t \lambda_1}{2} \Big[ \frac{1}{4} \big( 2 - \theta - \theta^2 \big) \Big]^2 \notag \\
        >& c^2 + \frac{(1 - \theta)(2 - \theta - \theta^2)}{8}
        - \frac{1}{\cancel{2}} \cdot \cancel{2} (1- \theta) \Big[ \frac{1}{4} \big( 2 - \theta - \theta^2 \big) \Big]^2 \notag \\
        =& c^2 + \frac{(1 - \theta)(2 - \theta - \theta^2)}{8}
        - \frac{1}{16} (1- \theta) \big( 2 - \theta - \theta^2 \big)^2 \notag \\
        =& c^2 + \frac{(1 - \theta)(2 - \theta - \theta^2)}{8} 
        \Big[ 1 - \frac{1}{2} \big( 2 - \theta - \theta^2 \big) \Big] \notag \\
        =& c^2 + \frac{(1 - \theta)(2 - \theta - \theta^2)}{8} 
        \big( \cancel{1} \cancel{- 1} + \frac{1}{2} \theta + \frac{1}{2} \theta^2 \big) \notag \\
        =& c^2 + \frac{(1 - \theta)(2 + \theta) (1 - \theta)}{8} \frac{1}{2} \theta (1 + \theta) \notag \\
        =& c^2 + \frac{ \theta (1 - \theta)^2 (1 + \theta)(2 + \theta)}{16}
        \geq \frac{ \theta (1 - \theta)^2 (1 + \theta)(2 + \theta)}{16}.
    \end{align*}
	\normalcolor
\end{confidential}
		To solve for $h_{11}$, we multiply first equation of \eqref{eq:H-stab-system} by $h_{22}$ and second equation of \eqref{eq:H-stab-system} by $h_{11}$
		\begin{align}
			&(1 + \epsilon) h_{11} h_{22} + a^2 h_{22}
			= \frac{(1+\theta) (2 + \theta - \theta^2)}{8} h_{22} + \frac{\nu \Delta t \lambda_1 \beta_2^2}{2} h_{22}, \label{eq:h11-eq1} \\
			&(1 + \epsilon) h_{22}h_{11} - h_{11}^2 + b^2 h_{11}
			= \frac{\nu \Delta t \lambda_1 \beta_1^2}{2}h_{11} - \frac{\theta^3}{2} h_{11}. \label{eq:h11-eq2}
		\end{align}
		We subtract \eqref{eq:h11-eq2} from \eqref{eq:h11-eq1} and obtain the following quadratic equation of $h_{11}$
		\begin{align}
			&h_{11}^2 + \Big( \frac{\nu \Delta t \lambda_1 \beta_1^2}{2} - \frac{\theta^3}{2} - b^2 \Big) h_{11} + \Big[ a^2 - \frac{(1+\theta) (2 + \theta - \theta^2)}{8} - \frac{\nu \Delta t \lambda_1 \beta_2^2}{2} \Big]h_{22} = 0.  \label{eq:quad-h11}
		\end{align}
\begin{confidential}
	\color{darkblue}
	\begin{align*}
			&a^2 h_{22} + h_{11}^2 - b^2 h_{11} = \frac{(1+\theta) (2 + \theta - \theta^2)}{8} h_{22} + \frac{\nu \Delta t \lambda_1 \beta_2^2}{2} h_{22} - \frac{\nu \Delta t \lambda_1 \beta_1^2}{2}h_{11} + \frac{\theta^3}{2} h_{11}. 
		\end{align*}
	\normalcolor
\end{confidential}
		We assume that the two roots of the quadratic equation \eqref{eq:quad-h11} are $h_{11}^+$ (the larger one) and $h_{11}^-$ 
		and have 
		\begin{align}
			&h_{11}^{+} + h_{11}^{-} = - \Big( \frac{\nu \Delta t \lambda_1 \beta_1^2}{2} - \frac{\theta^3}{2} - b^2 \Big), \label{eq:h11-sum} \\
			&h_{11}^{+} h_{11}^{-} = \Big[ a^2 - \frac{(1+\theta) (2 + \theta - \theta^2)}{8} - \frac{\nu \Delta t \lambda_1 \beta_2^2}{2} \Big]h_{22}. \notag 
		\end{align}
		By \eqref{eq:dt-cond-2} and \eqref{eq:h11-sum}
		\begin{align}
			h_{11}^{+} + h_{11}^{-} = b^2 + \frac{\theta^3}{2} \Big[ 1 - \frac{\nu \Delta t \lambda_1 \theta}{4} \Big] 
			> b^2 + \frac{\theta^3}{2} \big[1 - \frac{1}{2} \theta (1 - \theta) \big] > 0. 
			\label{eq:h11-sum-positive}
		\end{align}
\begin{confidential}
	\color{darkblue}
	\begin{align*}
        &h_{11}^{+} + h_{11}^{-} =
		b^2 + \frac{\theta^3}{2} - \frac{\nu \Delta t \lambda_1 \beta_1^2}{2} 
        =b^2 + \frac{\theta^3}{2} - \frac{\nu \Delta t \lambda_1}{2} \Big(\frac{1}{2} \theta^2 \Big)^2 \\
        &=b^2 + \frac{\theta^3}{2} - \frac{\nu \Delta t \lambda_1 \theta^4}{8} 
        =b^2 + \frac{\theta^3}{2} \Big[ 1 - \frac{\nu \Delta t \lambda_1 \theta}{4} \Big] 
		> b^2 + \frac{\theta^3}{2} \Big[ 1 - \frac{\bcancel{\nu} \cancel{\lambda_1} \theta}{4} 
        \frac{2 (1 - \theta)}{\bcancel{\nu} \cancel{\lambda_1}} \Big] \\
		&= b^2 + \frac{\theta^3}{2} \big[1 - \frac{1}{2} \theta (1 - \theta) \big] > 0.
    \end{align*}
	\normalcolor
\end{confidential}
		Since we have shown that $h_{22} >0$ and 
		\begin{align*}
			&a^2 - \frac{(1+\theta) (2 + \theta - \theta^2)}{8} - \frac{\nu \Delta t \lambda_1 \beta_2^2}{2} \\
			=& \frac{\sqrt{x} + \sqrt{\mathcal{D}}}{4}  
			\frac{\frac{1}{4}\big( \sqrt{x} + \sqrt{\mathcal{D}} \big)^2 - \mathscr{D}}{\underbrace{\frac{1}{2}\big( \sqrt{x} + \sqrt{\mathcal{D}} \big) + \sqrt{\mathscr{D}}}_{> \frac{1}{2}\big( \sqrt{x} + \sqrt{\mathcal{D}} \big)}} 
			- \Big( \frac{\nu \Delta t \lambda_1 \beta_2 \beta_0}{2} + \frac{a_1^2}{4} \Big) 
			- \frac{(1+\theta) \beta_2}{2} - \frac{\nu \Delta t \lambda_1 \beta_2^2}{2} \\
			<& \frac{1}{2} \big( \sqrt{x} + \sqrt{\mathcal{D}} \big)^2 - \mathscr{D} 
			- \Big( \frac{\nu \Delta t \lambda_1 \beta_2 \beta_0}{2} + \frac{a_1^2}{4} \Big) 
			- \frac{(1+\theta) (2 + \theta - \theta^2)}{8} - \frac{\nu \Delta t \lambda_1 \beta_2^2}{2} \\
			=& - \frac{\nu \Delta t \lambda_1 \beta_2 \theta}{4} 
        	- \frac{(1+\theta)}{4} < 0, 
		\end{align*}
		we have shown $h_{11}^{+} h_{11}^{-} <0$.
\begin{confidential}
	\color{darkblue}
	\begin{align*}
        &a^2 - \frac{(1+\theta) (2 + \theta - \theta^2)}{8} - \frac{\nu \Delta t \lambda_1 \beta_2^2}{2} \\
        =& \bigg\{ \frac{1}{2} \Big[ \frac{1}{2} \big( \sqrt{x} + \sqrt{\mathcal{D}} \big) - \sqrt{\mathscr{D}} \Big] \bigg\}^2
        - \frac{(1+\theta) (2 + \theta - \theta^2)}{8} - \frac{\nu \Delta t \lambda_1 \beta_2^2}{2} \\
        =& \frac{1}{4} \Big[ \frac{1}{2} \big( \sqrt{x} + \sqrt{\mathcal{D}} \big) - \sqrt{\mathscr{D}} \Big]^2 - \frac{(1+\theta) (2 + \theta - \theta^2)}{8} - \frac{\nu \Delta t \lambda_1 \beta_2^2}{2} \\
        =& \frac{1}{4} \Big[ \frac{1}{4} \big( \sqrt{x} + \sqrt{\mathcal{D}} \big)^2 - \sqrt{\mathscr{D}} \big( \sqrt{x} + \sqrt{\mathcal{D}} \big) + \mathscr{D} \Big]
        - \frac{(1+\theta) (2 + \theta - \theta^2)}{8} - \frac{\nu \Delta t \lambda_1  \beta_2^2}{2} \\ 
        =& \frac{1}{4} \Big[ \frac{1}{4} \big( \sqrt{x} + \sqrt{\mathcal{D}} \big)^2 - \sqrt{\mathscr{D}} \big( \sqrt{x} + \sqrt{\mathcal{D}} \big) + \frac{1}{4} \big( \sqrt{x} + \sqrt{\mathcal{D}})^2 
        - 4 \Big( \frac{\nu \Delta t \lambda_1 \beta_2 \beta_0}{2} + \frac{a_1^2}{4} \Big) \Big] \\
        &- \frac{(1+\theta) (2 + \theta - \theta^2)}{8} - \frac{\nu \Delta t \lambda_1 \beta_2^2}{2} \\ 
        =& \frac{1}{4} \Big[ \frac{1}{2} \big( \sqrt{x} + \sqrt{\mathcal{D}} \big)^2 - \sqrt{\mathscr{D}} \big( \sqrt{x} + \sqrt{\mathcal{D}} \big) - 4 \Big( \frac{\nu \Delta t \lambda_1 \beta_2 \beta_0}{2} + \frac{a_1^2}{4} \Big) \Big] \\
        &- \frac{(1+\theta) (2 + \theta - \theta^2)}{8} - \frac{\nu \Delta t \lambda_1 \beta_2^2}{2} \\
        =& \frac{1}{8} \big( \sqrt{x} + \sqrt{\mathcal{D}} \big)^2
        - \frac{1}{4} \sqrt{\mathscr{D}} \big( \sqrt{x} + \sqrt{\mathcal{D}} \big) - \Big( \frac{\nu \Delta t \lambda_1 \beta_2 \beta_0}{2} + \frac{a_1^2}{4} \Big) \\
        &- \frac{(1+\theta) (2 + \theta - \theta^2)}{8} - \frac{\nu \Delta t \lambda_1 \beta_2^2}{2} \\
        =& \frac{1}{4} \big( \sqrt{x} + \sqrt{\mathcal{D}} \big) 
        \Big[ \frac{1}{2}\big( \sqrt{x} + \sqrt{\mathcal{D}} \big) - \sqrt{\mathscr{D}} \Big] - \Big( \frac{\nu \Delta t \lambda_1 \beta_2 \beta_0}{2} + \frac{a_1^2}{4} \Big) \\
        &- \frac{(1+\theta) (2 + \theta - \theta^2)}{8} - \frac{\nu \Delta t \lambda_1 \beta_2^2}{2} \\
        =& \frac{1}{4} \big( \sqrt{x} + \sqrt{\mathcal{D}} \big) \frac{\Big[ \frac{1}{2}\big( \sqrt{x} + \sqrt{\mathcal{D}} \big) - \sqrt{\mathscr{D}} \Big] \Big[ \frac{1}{2}\big( \sqrt{x} + \sqrt{\mathcal{D}} \big) + \sqrt{\mathscr{D}} \Big]}{\frac{1}{2}\big( \sqrt{x} + \sqrt{\mathcal{D}} \big) + \sqrt{\mathscr{D}} }
        - \Big( \frac{\nu \Delta t \lambda_1 \beta_2 \beta_0}{2} + \frac{a_1^2}{4} \Big) \\
        &- \frac{(1+\theta) (2 + \theta - \theta^2)}{8} - \frac{\nu \Delta t \lambda_1 \beta_2^2}{2} \\
        =& \frac{1}{4} \big( \sqrt{x} + \sqrt{\mathcal{D}} \big) 
        \frac{\frac{1}{4}\big( \sqrt{x} + \sqrt{\mathcal{D}} \big)^2 - \mathscr{D}}{\underbrace{\frac{1}{2}\big( \sqrt{x} + \sqrt{\mathcal{D}} \big) + \sqrt{\mathscr{D}}}_{> \frac{1}{2}\big( \sqrt{x} + \sqrt{\mathcal{D}} \big)}} 
        - \Big( \frac{\nu \Delta t \lambda_1 \beta_2 \beta_0}{2} + \frac{a_1^2}{4} \Big) \\
        &- \frac{(1+\theta) (2 + \theta - \theta^2)}{8} - \frac{\nu \Delta t \lambda_1 \beta_2^2}{2} \\
        <& \frac{1}{4} \bcancel{\big( \sqrt{x} + \sqrt{\mathcal{D}} \big)}  
        \frac{\frac{1}{4} \big( \sqrt{x} + \sqrt{\mathcal{D}} \big)^2 - \mathscr{D}}{\frac{1}{2} \bcancel{\big( \sqrt{x} + \sqrt{\mathcal{D}} \big)}}
        - \Big( \frac{\nu \Delta t \lambda_1 \beta_2 \beta_0}{2} + \frac{a_1^2}{4} \Big) \\
        &- \frac{(1+\theta) (2 + \theta - \theta^2)}{8} - \frac{\nu \Delta t \lambda_1 \beta_2^2}{2} \\
        =& \frac{1}{2} \Big[ \frac{1}{4} \big( \sqrt{x} + \sqrt{\mathcal{D}} \big)^2 - \mathscr{D} \Big] - \Big( \frac{\nu \Delta t \lambda_1 \beta_2 \beta_0}{2} + \frac{a_1^2}{4} \Big) 
        - \frac{(1+\theta) (2 + \theta - \theta^2)}{8} - \frac{\nu \Delta t \lambda_1 \beta_2^2}{2} \\
        =& \frac{1}{2} \bigg\{ \frac{1}{4} \big( \sqrt{x} + \sqrt{\mathcal{D}} \big)^2 - \Big[ \frac{1}{4} \big( \sqrt{x} + \sqrt{\mathcal{D}})^2 - 4 \Big( \frac{\nu \Delta t \lambda_1 \beta_2 \beta_0}{2} + \frac{a_1^2}{4} \Big)  \Big] \bigg\} \\
        &- \Big( \frac{\nu \Delta t \lambda_1 \beta_2 \beta_0}{2} + \frac{a_1^2}{4} \Big) 
        - \frac{(1+\theta) (2 + \theta - \theta^2)}{8} - \frac{\nu \Delta t \lambda_1 \beta_2^2}{2} \\
        =& \frac{1}{2} \Big[ \cancel{\frac{1}{4} \big( \sqrt{x} + \sqrt{\mathcal{D}} \big)^2} \cancel{- \frac{1}{4} \big( \sqrt{x} + \sqrt{\mathcal{D}})^2}  + 4 \Big( \frac{\nu \Delta t \lambda_1 \beta_2 \beta_0}{2} + \frac{a_1^2}{4} \Big) \Big] \\
        &- \Big( \frac{\nu \Delta t \lambda_1 \beta_2 \beta_0}{2} + \frac{a_1^2}{4} \Big) 
        - \frac{(1+\theta) (2 + \theta - \theta^2)}{8} - \frac{\nu \Delta t \lambda_1 \beta_2^2}{2} \\
        =& 2 \Big( \frac{\nu \Delta t \lambda_1 \beta_2 \beta_0}{2} + \frac{a_1^2}{4} \Big) - \Big( \frac{\nu \Delta t \lambda_1 \beta_2 \beta_0}{2} + \frac{a_1^2}{4} \Big)
        - \frac{(1+\theta) (2 + \theta - \theta^2)}{8} - \frac{\nu \Delta t \lambda_1 \beta_2^2}{2} \\
        =& \Big( \frac{\nu \Delta t \lambda_1 \beta_2 \beta_0}{2} + \frac{a_1^2}{4} \Big) - \frac{(1+\theta) (2 + \theta - \theta^2)}{8} - \frac{\nu \Delta t \lambda_1 \beta_2^2}{2} \\
        =& \frac{\nu \Delta t \lambda_1 \beta_2 \beta_0}{2} - \frac{\nu \Delta t \lambda_1 \beta_2^2}{2} + \frac{a_1^2}{4} 
        - \frac{(1+\theta) (2 + \theta - \theta^2)}{8} \\
        =& \frac{\nu \Delta t \lambda_1 \beta_2}{2} \big( \beta_0 - \beta_2 \big) + \frac{1}{4} \frac{\theta (1 - \theta^2)}{2}
        - \frac{(1+\theta) (2 + \theta - \theta^2)}{8} \\
        =& \frac{\nu \Delta t \lambda_1 \beta_2}{2} \Big[ \frac{1}{4} \big(2 - \theta - \theta^2 \big) - \frac{1}{4} \big(2 + \theta - \theta^2 \big) \Big] + \frac{1}{8} \theta (1 - \theta)(1 + \theta)
        - \frac{(1+\theta) (2 + \theta - \theta^2)}{8} \\
        =& \frac{\nu \Delta t \lambda_1 \beta_2}{2} \frac{1}{4} \big(\cancel{2} - \theta \bcancel{- \theta^2} \cancel{- 2} - \theta \bcancel{+ \theta^2} \big) + \frac{(1+\theta)}{8} \Big[ \theta (1 - \theta) - \big(2 + \theta - \theta^2 \big) \Big] \\
        =& \frac{\nu \Delta t \lambda_1 \beta_2}{2} \frac{1}{4} (-2\theta) 
        + \frac{(1+\theta)}{8} \big(\cancel{\theta} \bcancel{- \theta^2} -2 \cancel{- \theta} \bcancel{+ \theta^2} \big) \\
        =& - \frac{\nu \Delta t \lambda_1 \beta_2 \theta}{4} 
        - \frac{(1+\theta)}{4} < 0. 
    \end{align*}
	\normalcolor
\end{confidential}
		Thus we can solve $h_{11}^+$ via the quadratic equation in \eqref{eq:quad-h11} 
		\begin{align}
			h_{11} = h_{11}^{+} 
			=& \frac{1}{2} \Bigg\{ \underbrace{- \Big( \frac{\nu \Delta t \lambda_1 \beta_1^2}{2} - \frac{\theta^3}{2} - b^2 \Big)}_{>0} \label{eq:h11-sol} \\
        	&\ \ \ +\!\! \sqrt{\Big( \frac{\nu \Delta t \lambda_1 \beta_1^2}{2} \!-\! \frac{\theta^3}{2} \!-\! b^2 \Big)^2 \!+\! \underbrace{4 \Big[ \frac{\nu \Delta t \lambda_1 \beta_2^2}{2} \!+\! \frac{(1+\theta) (2 + \theta - \theta^2)}{8} \!-\! a^2 \Big]h_{22}}_{>0} } \Bigg\} \notag \\
			=& - \Big( \frac{\nu \Delta t \lambda_1 \beta_1^2}{2} - \frac{\theta^3}{2} - b^2 \Big) 
        	\geq \frac{\theta^3}{2} \Big[ 1 - \frac{1}{2} \theta (1 - \theta) \Big] > 0,  \notag
		\end{align}
		where the lower bound is by \eqref{eq:h11-sum-positive}.
        We combine \eqref{eq:h22-sol} and \eqref{eq:h11-sol} to have \eqref{eq:h-lower-bound}. 
\begin{confidential}
	\color{darkblue}
	We solve for $h_{11}^{+}$ by quadratic formula 
    \begin{align*}
        &h_{11} = h_{11}^{+} \\
        =& \frac{1}{2} \bigg\{ - \Big( \frac{\nu \Delta t \lambda_1 \beta_1^2}{2} - \frac{\theta^3}{2} - b^2 \Big) 
        + \sqrt{\Big( \frac{\nu \Delta t \lambda_1 \beta_1^2}{2} - \frac{\theta^3}{2} - b^2 \Big)^2 - 4 \Big[ a^2 - \frac{(1+\theta) (2 + \theta - \theta^2)}{8} - \frac{\nu \Delta t \lambda_1 \beta_2^2}{2} \Big]h_{22} } \bigg\} \\
        =& \frac{1}{2} \bigg\{ \underbrace{- \Big( \frac{\nu \Delta t \lambda_1 \beta_1^2}{2} - \frac{\theta^3}{2} - b^2 \Big)}_{>0} 
        + \sqrt{\Big( \frac{\nu \Delta t \lambda_1 \beta_1^2}{2} - \frac{\theta^3}{2} - b^2 \Big)^2 + \underbrace{4 \Big[ \frac{\nu \Delta t \lambda_1 \beta_2^2}{2} + \frac{(1+\theta) (2 + \theta - \theta^2)}{8} - a^2 \Big]h_{22}}_{>0} } \bigg\} \notag \\
        \geq& \frac{1}{2} \bigg\{ - \Big( \frac{\nu \Delta t \lambda_1 \beta_1^2}{2} - \frac{\theta^3}{2} - b^2 \Big) + \sqrt{\Big( \frac{\nu \Delta t \lambda_1 \beta_1^2}{2} - \frac{\theta^3}{2} - b^2 \Big)^2 } \bigg\} \notag \\
        =& \frac{1}{2} \bigg\{ - \Big( \frac{\nu \Delta t \lambda_1 \beta_1^2}{2} - \frac{\theta^3}{2} - b^2 \Big) 
        + \Big| \frac{\nu \Delta t \lambda_1 \beta_1^2}{2} - \frac{\theta^3}{2} - b^2 \Big| \bigg\} \notag \\
        =& - \Big( \frac{\nu \Delta t \lambda_1 \beta_1^2}{2} - \frac{\theta^3}{2} - b^2 \Big) 
        \geq \frac{\theta^3}{2} \Big[ 1 - \frac{1}{2} \theta (1 - \theta) \Big].
    \end{align*}
	\normalcolor
\end{confidential}
		Finally we solve for $\epsilon$ and derive a lower bound for $1/\epsilon$. For convenienece, we denote
		\begin{align}
			B = \frac{\nu \Delta t \lambda_1 \beta_1^2}{2} - \frac{\theta^3}{2} - b^2 \ (<0), \quad 
			C = \frac{\nu \Delta t \lambda_1 \beta_2^2}{2} + \frac{(1+\theta) (2 + \theta - \theta^2)}{8} - a^2 \ (>0),
			\label{eq:note-B-C} 
		\end{align}
		and obtain via the second equation of \eqref{eq:H-stab-system}
		\begin{align}
			\epsilon = \frac{\sqrt{B^2 + 4 C h_{22}} - (2 h_{22} - B) }{2 h_{22}} 
			\label{eq:sol-epsi} 
		\end{align}
\begin{confidential}
	\color{darkblue}
	\begin{align*}
        &(1 + \epsilon) h_{22} \\
        =& \Big( \frac{\nu \Delta t \lambda_1 \beta_1^2}{2} - \frac{\theta^3}{2} - b^2 \Big) + h_{11} \\
        =& \Big( \frac{\nu \Delta t \lambda_1 \beta_1^2}{2} - \frac{\theta^3}{2} - b^2 \Big) - \frac{1}{2} \Big( \frac{\nu \Delta t \lambda_1 \beta_1^2}{2} - \frac{\theta^3}{2} - b^2 \Big) \\
        &+ \frac{1}{2} \sqrt{ \Big( \frac{\nu \Delta t \lambda_1 \beta_1^2}{2} - \frac{\theta^3}{2} - b^2 \Big)^2 
        + 4 \Big[  \underbrace{\frac{\nu \Delta t \lambda_1 \beta_2^2}{2} + \frac{(1+\theta) (2 + \theta - \theta^2)}{8} - a^2}_{> 0} \Big] h_{22} } \\
        =& \frac{1}{2} \Big( \frac{\nu \Delta t \lambda_1 \beta_1^2}{2} - \frac{\theta^3}{2} - b^2 \Big) 
        + \frac{1}{2} \sqrt{ \Big( \frac{\nu \Delta t \lambda_1 \beta_1^2}{2} - \frac{\theta^3}{2} - b^2 \Big)^2 
        + 4 \Big[ \frac{\nu \Delta t \lambda_1 \beta_2^2}{2} + \frac{(1+\theta) (2 + \theta - \theta^2)}{8} - a^2 \Big] h_{22} }.
    \end{align*}
	\normalcolor
\end{confidential}
\begin{confidential}
	\color{darkblue}
	\begin{align*}
		&(1 + \epsilon) h_{22} = \frac{1}{2} B + \frac{1}{2} \sqrt{B^2 + 4 C h_{22}} \quad \Leftrightarrow \quad
        1 + \epsilon = \frac{\frac{1}{2} B + \frac{1}{2} \sqrt{B^2 + 4 C h_{22}} }{h_{22}} \\
		&\epsilon = \frac{\frac{1}{2} B + \frac{1}{2} \sqrt{B^2 + 4 C h_{22}} }{h_{22}} - 1 \quad \Leftrightarrow \quad
		\epsilon = \frac{\frac{1}{2} B + \frac{1}{2} \sqrt{B^2 + 4 C h_{22}} - h_{22} }{h_{22}} \\
		&\epsilon = \frac{B + \sqrt{B^2 + 4 C h_{22}} - 2 h_{22} }{2 h_{22}}
		\quad \Leftrightarrow \quad 
		\epsilon = \frac{\sqrt{B^2 + 4 C h_{22}} - (2 h_{22} - B) }{2 h_{22}}
    \end{align*}
	\normalcolor
\end{confidential}
		To show $\epsilon > 0$, it suffice to show that $B^2 + 4 C h_{22} > (2 h_{22} - B)^2$ due to the facts that $h_{22}>0$ and $2 h_{22} - B >0$.
		By algebraic calculation, 
		\begin{align}
			B^2 + 4 C h_{22} - (2 h_{22} - B)^2
			= 4 h_{22} (B + C - h_{22}) = 4 h_{22} \Big( \frac{\nu \Delta t \lambda_1}{2} - x \Big). 
			\label{eq:B-C-h22}
		\end{align}
\begin{confidential}
	\color{darkblue}
	\begin{align*}
		&B^2 + 4 C h_{22} - (2 h_{22} - B)^2 \\
		=& B^2 + 4 C h_{22} - (4 h_{22}^2 - 4 B h_{22} + B^2) \\
		=& \cancel{B^2} + 4 C h_{22} - 4 h_{22}^2 + 4 B h_{22} \cancel{- B^2} \\
		=& 4 h_{22} ( B + C - h_{22}).
	\end{align*}
	\begin{align*}
		&B + C - h_{22} \\
		=& \frac{\nu \Delta t \lambda_1 \beta_1^2}{2} - \frac{\theta^3}{2} - b^2 + \frac{\nu \Delta t \lambda_1 \beta_2^2}{2} + \frac{(1+\theta) (2 + \theta - \theta^2)}{8} - a^2 
		- \Big( c^2 - \frac{(1 - \theta)(\theta^2 + \theta - 2)}{8} - \frac{\nu \Delta t \lambda_1 \beta_0^2}{2} \Big) \\
		=& \frac{\nu \Delta t \lambda_1}{2} ( \beta_2^2 + \beta_1^2 + \beta_0^2 ) 
		+ \frac{(1+\theta) (2 + \theta - \theta^2)}{8} + \frac{(1 - \theta)(\theta^2 + \theta - 2)}{8}
        - \frac{\theta^3}{2} - (a^2 + c^2 + b^2) 
	\end{align*}
	\begin{align*}
        &a^2 + c^2 + b^2 \\
        =& \frac{1}{4} \Big[ \frac{1}{2} \Big( \sqrt{x} + \sqrt{\mathcal{D}} \Big) -
        \sqrt{\mathscr{D}} \Big]^2 
        + \frac{1}{4} \Big[ \frac{1}{2} \Big( \sqrt{x} + \sqrt{\mathcal{D}} \Big) + 
        \sqrt{\mathscr{D}} \Big]^2 
        + \frac{1}{4} \Big( \sqrt{x} - \sqrt{\mathcal{D}} \Big)^2 \\
        =& \frac{1}{4} \Big[ \frac{1}{4} \Big( \sqrt{x} + \sqrt{\mathcal{D}} \Big)^2 + \mathscr{D}
        - \sqrt{\mathscr{D}} \Big( \sqrt{x} + \sqrt{\mathcal{D}} \Big) \Big] \\
        &+ \frac{1}{4} \Big[ \frac{1}{4} \Big( \sqrt{x} + \sqrt{\mathcal{D}} \Big)^2 + \mathscr{D}
        + \sqrt{\mathscr{D}} \Big( \sqrt{x} + \sqrt{\mathcal{D}} \Big) \Big] + \frac{1}{4} \Big( \sqrt{x} - \sqrt{\mathcal{D}} \Big)^2 \\
        =& \frac{1}{16} \Big( \sqrt{x} + \sqrt{\mathcal{D}} \Big)^2 + \frac{1}{4} \mathscr{D} 
        \cancel{- \frac{1}{4} \sqrt{\mathscr{D}} \Big( \sqrt{x} + \sqrt{\mathcal{D}} \Big)} \\
        &+ \frac{1}{16} \Big( \sqrt{x} + \sqrt{\mathcal{D}} \Big)^2 + \frac{1}{4} \mathscr{D} 
        \cancel{+ \frac{1}{4} \sqrt{\mathscr{D}} \Big( \sqrt{x} + \sqrt{\mathcal{D}} \Big)} 
        + \frac{1}{4} \Big( \sqrt{x} - \sqrt{\mathcal{D}} \Big)^2 \\
        =& \frac{1}{8} \Big( \sqrt{x} + \sqrt{\mathcal{D}} \Big)^2 + \frac{1}{2} \mathscr{D}
        + \frac{1}{4} \Big( \sqrt{x} - \sqrt{\mathcal{D}} \Big)^2 \\
        =& \frac{1}{8} \Big( \sqrt{x} + \sqrt{\mathcal{D}} \Big)^2
        + \frac{1}{4} \Big( \sqrt{x} - \sqrt{\mathcal{D}} \Big)^2 
        + \frac{1}{2} \Big[ \frac{1}{4} \big( \sqrt{x} + \sqrt{\mathcal{D}})^2 - 4 \Big( \frac{\nu \Delta t \lambda_1 \beta_2 \beta_0}{2} + \frac{a_1^2}{4} \Big) \Big] \\
        =& \frac{1}{8} \Big( \sqrt{x} + \sqrt{\mathcal{D}} \Big)^2
        + \frac{1}{4} \Big( \sqrt{x} - \sqrt{\mathcal{D}} \Big)^2 
        + \frac{1}{8} \Big( \sqrt{x} + \sqrt{\mathcal{D}} \Big)^2
        - 2 \Big( \frac{\nu \Delta t \lambda_1 \beta_2 \beta_0}{2} + \frac{a_1^2}{4} \Big) \\
        =& \frac{1}{4} \Big( \sqrt{x} + \sqrt{\mathcal{D}} \Big)^2
        + \frac{1}{4} \Big( \sqrt{x} - \sqrt{\mathcal{D}} \Big)^2 
        - \Big( \nu \Delta t \lambda_1 \beta_2 \beta_0 + \frac{a_1^2}{2} \Big)\\
        =& \frac{1}{4} \Big\{ x
        + \mathcal{D} + 2 \sqrt{\mathcal{D}} \sqrt{x} \Big\} 
        + \frac{1}{4} \Big\{ x + \mathcal{D} - 2 \sqrt{\mathcal{D}} \sqrt{x} \Big\} 
        - \nu \Delta t \lambda_1 \beta_2 \beta_0 - \frac{1}{2} \frac{\theta (1 - \theta^2)}{2} \\
        =& \frac{1}{4}x 
        + \frac{1}{4} \mathcal{D} \cancel{+ \frac{1}{2} \sqrt{\mathcal{D}} \sqrt{x}} + \frac{1}{4}x
        + \frac{1}{4} \mathcal{D} \cancel{- \frac{1}{2} \sqrt{\mathcal{D}} \sqrt{x}} - \nu \Delta t \lambda_1 \beta_2 \beta_0 - \frac{\theta (1 - \theta^2)}{4} \\
        =& \frac{1}{2} x + \frac{1}{2} \mathcal{D} 
        - \nu \Delta t \lambda_1 \beta_2 \beta_0 - \frac{\theta (1 - \theta^2)}{4} \\
        =& \frac{1}{2} x + \frac{1}{2} \big(x - 4E \big)
        - \nu \Delta t \lambda_1 \beta_2 \beta_0 - \frac{\theta (1 - \theta^2)}{4} \\
        =& \frac{1}{2} x + \frac{1}{2} x - 2E 
        - \nu \Delta t \lambda_1 \beta_2 \beta_0 - \frac{\theta (1 - \theta^2)}{4} \\
        =& x - 2 \Big( \frac{\nu \Delta t \lambda_1 \beta_1 (\beta_2 + \beta_0 )}{2} - a_1^2 \Big) 
        - \nu \Delta t \lambda_1 \beta_2 \beta_0 - \frac{\theta (1 - \theta^2)}{4} \\
        =& x - \nu \Delta t \lambda_1 \beta_1 (\beta_2 + \beta_0 ) + 2 a_1^2 - \nu \Delta t \lambda_1 \beta_2 \beta_0 - \frac{\theta (1 - \theta^2)}{4} \\
        =& x - \nu \Delta t \lambda_1 \big( \beta_2 \beta_1 + \beta_1 \beta_0 + \beta_2 \beta_0 \big) + \cancel{2} \frac{\theta (1 - \theta^2)}{\cancel{2}} - \frac{\theta (1 - \theta^2)}{4} \\
        =& x - \nu \Delta t \lambda_1 \big( \beta_2 \beta_1 + \beta_1 \beta_0 + \beta_2 \beta_0 \big) + \theta (1 - \theta^2)
        - \frac{\theta (1 - \theta^2)}{4} \\
        =& x - \nu \Delta t \lambda_1 \big( \beta_2 \beta_1 + \beta_1 \beta_0 + \beta_2 \beta_0 \big) + \big( 1 - \frac{1}{4} \big) \theta (1 - \theta^2) \\
        =& x - \nu \Delta t \lambda_1 \big( \beta_2 \beta_1 + \beta_1 \beta_0 + \beta_2 \beta_0 \big) + \frac{3}{4} \theta (1 - \theta^2).
    \end{align*}
	\begin{align*}
        &\frac{\nu \Delta t \lambda_1}{2} \big(\beta_2^2 + \beta_1^2 + \beta_0^2 \big) +  \frac{(1+\theta) (2 + \theta - \theta^2)}{8} + \frac{(1 - \theta)(\theta^2 + \theta - 2)}{8}
        - \frac{\theta^3}{2} - ( a^2 + c^2 + b^2 ) \\
        =& \frac{\nu \Delta t \lambda_1}{2} \big(\beta_2^2 + \beta_1^2 + \beta_0^2 \big) +  \frac{(1+\theta) (2 + \theta - \theta^2)}{8} + \frac{(1 - \theta)(\theta^2 + \theta - 2)}{8} - \frac{\theta^3}{2}
        \\
        &- \Big[ x - \nu \Delta t \lambda_1 \big( \beta_2 \beta_1 + \beta_1 \beta_0 + \beta_2 \beta_0 \big) + \frac{3}{4} \theta (1 - \theta^2) \Big] \\
        =& \frac{\nu \Delta t \lambda_1}{2} \big(\beta_2^2 + \beta_1^2 + \beta_0^2 \big) +  \frac{(1+\theta) (2 + \theta - \theta^2)}{8} + \frac{(1 - \theta)(\theta^2 + \theta - 2)}{8} - \frac{\theta^3}{2} \\
        &- x + \nu \Delta t \lambda_1 \big( \beta_2 \beta_1 + \beta_1 \beta_0 + \beta_2 \beta_0 \big) - \frac{3}{4} \theta (1 - \theta^2) \\
        =& \frac{\nu \Delta t \lambda_1}{2} \big(\beta_2^2 + \beta_1^2 + \beta_0^2 \big) + \nu \Delta t \lambda_1 \big( \beta_2 \beta_1 + \beta_1 \beta_0 + \beta_2 \beta_0 \big) - x \\
        &+ \frac{(1+\theta) (2 + \theta - \theta^2)}{8} + \frac{(1 - \theta)(\theta^2 + \theta - 2)}{8} - \frac{\theta^3}{2} - \frac{3}{4} \theta (1 - \theta^2) \\
        =& \frac{\nu \Delta t \lambda_1}{2} \big(\beta_2^2 + \beta_1^2 + \beta_0^2 + 2 \beta_2 \beta_1 + 2 \beta_1 \beta_0 + 2 \beta_2 \beta_0 \big) - x \\
        &+ \frac{(1+\theta) (2 + \theta - \theta^2)}{8} + \frac{(1 - \theta)(\theta^2 + \theta - 2)}{8} - \frac{4 \theta^3}{8} - \frac{6 \theta (1 - \theta^2)}{8}  \\
        =& \frac{\nu \Delta t \lambda_1}{2} \big( \underbrace{\beta_2 + \beta_1 + \beta_0}_{=1} \big)^2 - x \\
        &+ \frac{1}{8} \Big[(1+\theta) (2 + \theta - \theta^2) + (1 - \theta)(\theta^2 + \theta - 2) - 4 \theta^3 - 6 \theta (1 - \theta^2) \Big] \\
        =& \frac{\nu \Delta t \lambda_1}{2} - x 
        + \frac{1}{8} \big( \bcancel{2} + \theta - \theta^2 + 2 \theta + \theta^2 \cancel{- \theta^3} + \theta^2 + \theta \bcancel{- 2} \cancel{- \theta^3} - \theta^2 + 2 \theta \cancel{-  4 \theta^3} - 6 \theta \cancel{+ 6 \theta^3} \big) \\
        =& \frac{\nu \Delta t \lambda_1}{2} - x + \frac{1}{8} \big( \cancel{\theta} \bcancel{- \theta^2} \cancel{+ 2 \theta} \bcancel{+ \theta^2} \bcancel{+ \theta^2} \cancel{+ \theta} \bcancel{- \theta^2} \cancel{+ 2 \theta} \cancel{- 6 \theta} \big) \\
        =& \frac{\nu \Delta t \lambda_1}{2} - x,
    \end{align*}
	By \eqref{eq:x-sol}
    \begin{align*}
        x = \frac{\bigg\{ 4 FE - \Big[\nu \Delta t \lambda_1 \beta_1 ( \beta_2 - \beta_0) \Big]^2 \bigg\}^2}{4 (E - F) \bigg\{ \Big[ \nu \Delta t \lambda_1 \beta_1 ( \beta_2 - \beta_0) \Big]^2 - 4E^2 \bigg\}} < \frac{\nu \Delta t \lambda_1}{2}, \\
    \end{align*}
	For the numerator
    \begin{align*}
        &4 FE - \Big[ \nu \Delta t \lambda_1 \beta_1 ( \beta_2 - \beta_0) \Big]^2 \\
        =& 4 \Big[ \frac{\nu \Delta t \lambda_1 \beta_1 (\beta_2 + \beta_0 )}{2} + 2 \nu \Delta t \lambda_1 \beta_2 \beta_0 \Big] 
        \Big[ \frac{\nu \Delta t \lambda_1 \beta_1 (\beta_2 + \beta_0 )}{2} 
        - a_1^2 \Big] - \Big[\nu \Delta t \lambda_1 \beta_1 ( \beta_2 - \beta_0) \Big]^2 \\
        =&4 \bigg\{ \Big[ \frac{\nu \Delta t \lambda_1 \beta_1 (\beta_2 + \beta_0 )}{2} \Big]^2 + \Big[ 2 \nu \Delta t \lambda_1 \beta_2 \beta_0 - a_1^2 \Big]\Big[ \frac{\nu \Delta t \lambda_1 \beta_1 (\beta_2 + \beta_0 )}{2} \Big] - a_1^2 2 \nu \Delta t \lambda_1 \beta_2 \beta_0 \bigg\} \\
        &- \Big[\nu \Delta t \lambda_1 \beta_1 ( \beta_2 - \beta_0) \Big]^2 \\
        =&\cancel{4} \frac{1}{\cancel{4}}\Big[ \nu \Delta t \lambda_1 \beta_1 (\beta_2 + \beta_0 ) \Big]^2
        + 2 \Big[ 2 \nu \Delta t \lambda_1 \beta_2 \beta_0 - a_1^2 \Big]\Big[ \nu \Delta t \lambda_1 \beta_1 (\beta_2 + \beta_0 ) \Big] - 8 a_1^2  \nu \Delta t \lambda_1 \beta_2 \beta_0 \\
        &- \Big[\nu \Delta t \lambda_1 \beta_1 ( \beta_2 - \beta_0) \Big]^2 \\
        =& \Big( \nu \Delta t \lambda_1 \Big)^2 \beta_1^2 (\beta_2 + \beta_0 )^2 + 4 \Big( \nu \Delta t \lambda_1 \Big) \beta_2 \beta_0 \Big( \nu \Delta t \lambda_1 \Big) \beta_1 (\beta_2 + \beta_0 ) 
        - 2 a_1^2 \beta_1 (\beta_2 + \beta_0 ) \Big( \nu \Delta t \lambda_1 \Big) \\
        &- 8 a_1^2 \beta_2 \beta_0 \Big( \nu \Delta t \lambda_1 \Big) 
        - \beta_1^2 ( \beta_2 - \beta_0)^2 \Big( \nu \Delta t \lambda_1 \Big)^2 \\
        =& \Big( \nu \Delta t \lambda_1 \Big)^2 \Big[ \beta_1^2 (\beta_2 + \beta_0 )^2 + 4 \beta_2 \beta_1 \beta_0 (\beta_2 + \beta_0) - \beta_1^2 ( \beta_2 - \beta_0)^2 \Big] \\
        &- \Big[ 2 a_1^2 \beta_1 (\beta_2 + \beta_0 ) + 8 a_1^2 \beta_2 \beta_0 \Big] \Big( \nu \Delta t \lambda_1 \Big) \\
        =& \Big( \nu \Delta t \lambda_1 \Big)^2 \bigg\{ 
        \beta_1^2 \Big[ (\beta_2 + \beta_0 )^2 - ( \beta_2 - \beta_0)^2 \Big] + 4 \beta_2 \beta_1 \beta_0 (\beta_2 + \beta_0) \bigg\} \\
        &- 2 a_1^2 \Big[ \beta_1 (\beta_2 + \beta_0 ) + 4 \beta_2 \beta_0 \Big] \Big( \nu \Delta t \lambda_1 \Big) \\
        =& \Big( \nu \Delta t \lambda_1 \Big)^2 \bigg\{ 
        \beta_1^2 \Big[ \big( \beta_2 + \beta_0 + \beta_2 - \beta_0 \big) \big( \beta_2 + \beta_0 - ( \beta_2 - \beta_0) \big) \Big] + 4 \beta_2 \beta_1 \beta_0 (\beta_2 + \beta_0) \bigg\} \\
        &- 2 a_1^2 \Big[ \beta_1 (\beta_2 + \beta_0 ) + 4 \beta_2 \beta_0 \Big] \Big( \nu \Delta t \lambda_1 \Big) \\ 
        =& \Big( \nu \Delta t \lambda_1 \Big)^2 \bigg\{ 
        \beta_1^2 \Big[ \big( \beta_2 \cancel{+ \beta_0} + \beta_2 \cancel{- \beta_0} \big) \big( \bcancel{\beta_2} + \beta_0 \bcancel{- \beta_2} + \beta_0 \big) \Big] + 4 \beta_2 \beta_1 \beta_0 (\beta_2 + \beta_0) \bigg\} \\
        &- 2 a_1^2 \Big[ \beta_1 (\beta_2 + \beta_0 ) + 4 \beta_2 \beta_0 \Big] \Big( \nu \Delta t \lambda_1 \Big) \\ 
        =& \Big( \nu \Delta t \lambda_1 \Big)^2 \Big[ \beta_1^2 (2 \beta_2) (2 \beta_0) + 4 \beta_2 \beta_1 \beta_0 (\beta_2 + \beta_0) \Big] - 2 a_1^2 \Big[ \beta_1 (\beta_2 + \beta_0 ) + 4 \beta_2 \beta_0 \Big] \Big( \nu \Delta t \lambda_1 \Big) \\ 
        =& \Big( \nu \Delta t \lambda_1 \Big)^2 \Big[ 4 \beta_2 \beta_1 \beta_0 \beta_1 + 4 \beta_2 \beta_1 \beta_0 (\beta_2 + \beta_0) \Big] - 2 a_1^2 \Big[ \beta_1 (\beta_2 + \beta_0 ) + 4 \beta_2 \beta_0 \Big] \Big( \nu \Delta t \lambda_1 \Big) \\
        =& \Big( \nu \Delta t \lambda_1 \Big)^2 4 \beta_2 \beta_1 \beta_0 \big( \underbrace{\beta_1 + \beta_2 + \beta_0}_{=1} \big)
        - 2 a_1^2 \Big[ \beta_1 (\beta_2 + \beta_0 ) + 4 \beta_2 \beta_0 \Big] \Big( \nu \Delta t \lambda_1 \Big) \\
        =& \Big( \nu \Delta t \lambda_1 \Big)^2 4 \beta_2 \beta_1 \beta_0 - 2 a_1^2 \Big[ \beta_1 (\beta_2 + \beta_0 ) + 4 \beta_2 \beta_0 \Big] \Big( \nu \Delta t \lambda_1 \Big) \\
        =& \Big( \nu \Delta t \lambda_1 \Big) \bigg\{ 4 \beta_2 \beta_1 \beta_0 \Big( \nu \Delta t \lambda_1 \Big) - 2 a_1^2 \Big[ \beta_1 (\beta_2 + \beta_0 ) + 4 \beta_2 \beta_0 \Big] \bigg\} \\
        =& 2 \Big( \nu \Delta t \lambda_1  \Big) \bigg\{ 2 \beta_2 \beta_1 \beta_0 \Big( \nu \Delta t \lambda_1 \Big) - a_1^2 \Big[ \beta_1 (\beta_2 + \beta_0 ) + 4 \beta_2 \beta_0 \Big] \bigg\}
    \end{align*}
	For the denominator, we have 
    \begin{align*}
        &4 (E - F) \bigg\{ \Big[\nu \Delta t \lambda_1 \beta_1 ( \beta_2 - \beta_0) \Big]^2 - 4E^2 \bigg\} \\
        =& 16 \Big( a_1^2 + 2 \nu \Delta t \lambda_1 \beta_2 \beta_0 \Big) \Big( a_1^2 - \nu \Delta t \lambda_1 \beta_1 \beta_0 \Big) \Big( a_1^2 - \nu \Delta t \lambda_1 \beta_1 \beta_2 \Big).
    \end{align*}
	Hence we need 
    \begin{align*}
        &\frac{ 4 \Big( \nu \Delta t \lambda_1 \Big)^{\cancel{2}} \bigg\{ 2 \beta_2 \beta_1 \beta_0 \Big( \nu \Delta t \lambda_1 \Big) - a_1^2 \Big[ \beta_1 (\beta_2 + \beta_0 ) + 4 \beta_2 \beta_0 \Big] \bigg\}^2 }{ 16 \Big( a_1^2 + 2 \nu \Delta t \lambda_1 \beta_2 \beta_0 \Big) \Big( a_1^2 - \nu \Delta t \lambda_1 \beta_1 \beta_0 \Big) \Big( a_1^2 - \nu \Delta t \lambda_1 \beta_1 \beta_2 \Big) } 
        < \frac{1}{2} \cancel{\nu \Delta t \lambda_1} \\
        &4 \Big( \nu \Delta t \lambda_1 \Big) \bigg\{ 2 \beta_2 \beta_1 \beta_0 \Big( \nu \Delta t \lambda_1 \Big) - a_1^2 \Big[ \beta_1 (\beta_2 + \beta_0 ) + 4 \beta_2 \beta_0 \Big] \bigg\}^2 \\
        &< 8 \Big( 2 \nu \Delta t \lambda_1 \beta_2 \beta_0 + a_1^2 \Big) \Big( a_1^2 - \nu \Delta t \lambda_1 \beta_1 \beta_0 \Big) \Big( a_1^2 - \nu \Delta t \lambda_1 \beta_1 \beta_2 \Big) \\
        &\Big( \nu \Delta t \lambda_1 \Big) \bigg\{ 2 \beta_2 \beta_1 \beta_0 \Big( \nu \Delta t \lambda_1 \Big) - a_1^2 \Big[ \beta_1 (\beta_2 + \beta_0 ) + 4 \beta_2 \beta_0 \Big] \bigg\}^2 \\
        &< 2 \Big( 2 \nu \Delta t \lambda_1 \beta_2 \beta_0 + a_1^2 \Big) \Big( a_1^2 - \nu \Delta t \lambda_1 \beta_1 \beta_0 \Big) \Big( a_1^2 - \nu \Delta t \lambda_1 \beta_1 \beta_2 \Big) \\
        & \Big( \nu \Delta t \lambda_1 \Big) \bigg\{ 4 \beta_2^2 \beta_1^2 \beta_0^2 \Big( \nu \Delta t \lambda_1 \Big)^2
        + a_1^4 \Big[ \beta_1 (\beta_2 + \beta_0 ) + 4 \beta_2 \beta_0 \Big]^2 - 4 a_1^2 \beta_2 \beta_1 \beta_0 \Big[ \beta_1 (\beta_2 + \beta_0 ) + 4 \beta_2 \beta_0 \Big] \Big(\nu \Delta t \lambda_1 \Big) \bigg\} \\
        &< 2 \Big( 2 \nu \Delta t \lambda_1 \beta_2 \beta_0 + a_1^2 \Big) \Big[ a_1^4 - \Big( \nu \Delta t \lambda_1 \beta_1 \beta_0 + \nu \Delta t \lambda_1 \beta_1 \beta_2 \Big) a_1^2 + \Big( \nu \Delta t \lambda_1 \Big)^2 \beta_2 \beta_1^2 \beta_0 \Big] \\
		\\
        &4 \beta_2^2 \beta_1^2 \beta_0^2 \Big( \nu \Delta t \lambda_1 \Big)^3 + a_1^4 \Big[ \beta_1 (\beta_2 + \beta_0 ) + 4 \beta_2 \beta_0 \Big]^2 \Big( \nu \Delta t \lambda_1 \Big)
        - 4 a_1^2 \beta_2 \beta_1 \beta_0 \Big[ \beta_1 (\beta_2 + \beta_0 ) + 4 \beta_2 \beta_0 \Big] \Big( \nu \Delta t \lambda_1 \Big)^2 \\
        &< \Big[ 4 \beta_2 \beta_0 \Big( \nu \Delta t \lambda_1 \Big) + 2 a_1^2 \Big] \Big[ a_1^4 - \Big( \nu \Delta t \lambda_1 \beta_1 \beta_0 + \nu \Delta t \lambda_1 \beta_1 \beta_2 \Big) a_1^2 + \Big( \nu \Delta t \lambda_1 \Big)^2 \beta_2 \beta_1^2 \beta_0 \Big] \\
		\\
        &\cancel{4 \beta_2^2 \beta_1^2 \beta_0^2 \Big( \nu \Delta t \lambda_1 \Big)^3} + a_1^4 \Big[ \beta_1 (\beta_2 + \beta_0 ) + 4 \beta_2 \beta_0 \Big]^2 \Big( \nu \Delta t \lambda_1 \Big)
        - 4 a_1^2 \beta_2 \beta_1 \beta_0 \Big[ \beta_1 (\beta_2 + \beta_0 ) + 4 \beta_2 \beta_0 \Big] \Big( \nu \Delta t \lambda_1 \Big)^2 \\
        &< 4 a_1^4 \beta_2 \beta_0 \Big( \nu \Delta t \lambda_1 \Big) - 4 a_1^2 \beta_2 \beta_0 \Big( \nu \Delta t \lambda_1 \Big) \beta_1 (\beta_2 + \beta_0) \Big( \nu \Delta t \lambda_1 \Big) \cancel{+ 4 \beta_2^2 \beta_1^2 \beta_0^2 \Big(\nu \Delta t \lambda_1 \Big)^3} \\
        &+ 2 a_1^6 - 2 a_1^4 \beta_1 (\beta_2 + \beta_0) \Big( \nu \Delta t \lambda_1 \Big) + 2 a_1^2 \beta_2 \beta_1^2 \beta_0 
        \Big( \nu \Delta t \lambda_1 \Big)^2 \\
		\\
        &a_1^4 \Big[ \beta_1 (\beta_2 + \beta_0 ) + 4 \beta_2 \beta_0 \Big]^2 \Big( \nu \Delta t \lambda_1 \Big)
        - 4 a_1^2 \beta_2 \beta_1 \beta_0 \Big[ \beta_1 (\beta_2 + \beta_0 ) + 4 \beta_2 \beta_0 \Big] \Big( \nu \Delta t \lambda_1 \Big)^2 \\
        &<4 a_1^4 \beta_2 \beta_0 \Big( \nu \Delta t \lambda_1 \Big) - 2 a_1^4 \beta_1 (\beta_2 + \beta_0) \Big( \nu \Delta t \lambda_1 \Big) 
        - 4 a_1^2 \beta_2 \beta_1 \beta_0 (\beta_2 + \beta_0) \Big( \nu \Delta t \lambda_1 \Big)^2 
        + 2 a_1^2 \beta_2 \beta_1^2 \beta_0 
        \Big( \nu \Delta t \lambda_1 \Big)^2 + 2 a_1^6. 
    \end{align*}
	Hence 
    \begin{align}
        &4 a_1^2 \beta_2 \beta_1 \beta_0  \Big( \nu \Delta t \lambda_1 \Big)^2 \Big[ \beta_1 (\beta_2 + \beta_0 ) + 4 \beta_2 \beta_0 - (\beta_2 + \beta_0) \Big] 
        + 2 a_1^2 \beta_2 \beta_1^2 \beta_0 
        \Big( \nu \Delta t \lambda_1 \Big)^2 \notag \\
        &+ a^4 \Big( \nu \Delta t \lambda_1 \Big)
        \bigg\{ 4 \beta_2 \beta_0 - 2 \beta_1 (\beta_2 + \beta_0) - \Big[ \beta_1 (\beta_2 + \beta_0 ) + 4 \beta_2 \beta_0 \Big]^2 \bigg\}
        + 2 a_1^6 > 0.  \label{eq:ineq-dt}
    \end{align}
	\begin{align*}
        &\beta_1 (\beta_2 + \beta_0 ) + 4 \beta_2 \beta_0 - (\beta_2 + \beta_0) \\
        =& \big( \frac{1}{2} \theta^2 \big) \Big[ \frac{1}{4} \big( 2 + \theta - \theta^2 \big) + \frac{1}{4} \big( 2 - \theta - \theta^2 \big) \Big] + 4 \cdot \frac{1}{4} \big( 2 + \theta - \theta^2 \big) 
        \cdot \frac{1}{4} \big( 2 - \theta - \theta^2 \big) \\
        &- \Big[ \frac{1}{4} \big( 2 + \theta - \theta^2 \big) + \frac{1}{4} \big( 2 - \theta - \theta^2 \big) \Big] \\
        =& \big( \frac{1}{8} \theta^2 \big) \Big[ 2 \cancel{+ \theta} - \theta^2 + 2 \cancel{- \theta} - \theta^2 \Big]
        + \frac{1}{4} \big( 2 - \theta^2 + \theta \big) \big( 2 - \theta^2 - \theta \big) \\
        &- \frac{1}{4} \Big[ 2 \cancel{+ \theta} - \theta^2 + 2 \cancel{- \theta} - \theta^2 \Big] \\
        =& \big( \frac{1}{4} \theta^2 \big) (2 - \theta^2) 
        + \frac{1}{4} \Big[ (2 - \theta^2)^2 - \theta^2 \Big]
        - \frac{1}{4} 2 (2 - \theta^2) \\
        =& \frac{1}{4} \theta^2 (2 - \theta^2) + \frac{1}{4} (2 - \theta^2)^2 - \frac{1}{4} \theta^2 - \frac{1}{2} (2 - \theta^2) \\
        =& \frac{1}{4} (2 - \theta^2) \big( \cancel{\theta^2} + 2 \cancel{- \theta^2} \big) - \frac{1}{4} \theta^2 - \frac{1}{2} (2 - \theta^2) \\
        =& \cancel{\frac{1}{2} (2 - \theta^2)} - \frac{1}{4} \theta^2 
        \cancel{- \frac{1}{2} (2 - \theta^2)} 
        = - \frac{1}{4} \theta^2 = - \frac{1}{2} \big( \frac{1}{2} \theta^2 \big) = - \frac{1}{2} \beta_1.
    \end{align*}
	Thus \eqref{eq:ineq-dt} become
    \begin{align}
        &4 a_1^2 \beta_2 \beta_1 \beta_0  \Big( \nu \Delta t \lambda_1 \Big)^2 \big( - \frac{1}{2} \beta_1 \big)
        + 2 a_1^2 \beta_2 \beta_1^2 \beta_0 
        \Big( \nu \Delta t \lambda_1 \Big)^2 \notag \\
        &+ a^4 \Big( \nu \Delta t \lambda_1 \Big)
        \bigg\{ 4 \beta_2 \beta_0 - 2 \beta_1 (\beta_2 + \beta_0) - \Big[ \beta_1 (\beta_2 + \beta_0 ) + 4 \beta_2 \beta_0 \Big]^2 \bigg\}
        + 2 a_1^6 > 0. \notag \\
        &\cancel{- 2 a_1^2 \beta_2 \beta_1^2 \beta_0  \Big( \nu \Delta t \lambda_1 \Big)^2} 
        \cancel{+ 2 a_1^2 \beta_2 \beta_1^2 \beta_0 
        \Big( \nu \Delta t \lambda_1 \Big)^2} \notag \\
        &+ a^4 \Big( \nu \Delta t \lambda_1 \Big)
        \bigg\{ 4 \beta_2 \beta_0 - 2 \beta_1 (\beta_2 + \beta_0) - \Big[ \beta_1 (\beta_2 + \beta_0 ) + 4 \beta_2 \beta_0 \Big]^2 \bigg\}
        + 2 a_1^6 > 0. \label{eq:ineq-dt-eq2}
    \end{align}
	\begin{align*}
        &4 \beta_2 \beta_0 - 2 \beta_1 (\beta_2 + \beta_0) - \Big[ \beta_1 (\beta_2 + \beta_0 ) + 4 \beta_2 \beta_0 \Big]^2 \\
        =& 4 \cdot \frac{1}{4} \big( 2 + \theta - \theta^2 \big) 
        \cdot \frac{1}{4} \big( 2 - \theta - \theta^2 \big)
        - 2 \big( \frac{1}{2} \theta^2 \big) \Big[ \frac{1}{4} \big( 2 + \theta - \theta^2 \big) + \frac{1}{4} \big( 2 - \theta - \theta^2 \big) \Big] \\
        &- \bigg\{ \big( \frac{1}{2} \theta^2 \big) \Big[ \frac{1}{4} \big( 2 + \theta - \theta^2 \big) + \frac{1}{4} \big( 2 - \theta - \theta^2 \big) \Big] + 4 \cdot \frac{1}{4} \big( 2 + \theta - \theta^2 \big) \cdot \frac{1}{4} \big( 2 - \theta - \theta^2 \big) \bigg\}^2 \\
        =& \frac{1}{4} \big( 2 - \theta^2 + \theta \big) \big( 2 - \theta^2 - \theta \big) - \theta^2 \frac{1}{4} \big( 2 \cancel{+ \theta} - \theta^2 + 2 \cancel{- \theta} - \theta^2 \big) \\
        &- \bigg\{ \frac{1}{8} \theta^2 \big( 2 \cancel{+ \theta} - \theta^2 + 2 \cancel{- \theta} - \theta^2 \big) + \frac{1}{4} \big( 2 - \theta^2 + \theta \big) \big( 2 - \theta^2 - \theta \big) \bigg\}^2 \\
        =& \frac{1}{4} \Big[ (2 - \theta^2)^2 - \theta^2 \Big] 
        - \frac{1}{4} \theta^2 \cdot 2 (2 - \theta^2) 
        - \bigg\{ \frac{1}{8} \theta^2 \cdot 2 \big( 2 - \theta^2 \big) + \frac{1}{4} \Big[ (2 - \theta^2)^2 - \theta^2 \Big] \bigg\}^2 \\
        =& \frac{1}{4} (2 - \theta^2)^2 - \frac{1}{4} \theta^2 
        - \frac{1}{2} \theta^2 (2 - \theta^2)
        - \bigg\{ \frac{1}{4} \theta^2 \big( 2 - \theta^2 \big) + \frac{1}{4} (2 - \theta^2)^2 - \frac{1}{4} \theta^2 \bigg\}^2 \\
        =& \frac{1}{4} \big( 4 - 4 \theta^2 + \theta^4 \big) - \frac{1}{4} \theta^2 - \theta^2 + \frac{1}{2} \theta^4  
        - \bigg\{ \frac{1}{4} \big( 2 - \theta^2 \big) ( \cancel{\theta^2} + 2 \cancel{- \theta^2}) - \frac{1}{4} \theta^2 \bigg\}^2 \\
        =& 1 - \theta^2 + \frac{1}{4} \theta^4 - \frac{5}{4} \theta^2
        + \frac{1}{2} \theta^4 - \Big[ \frac{1}{2} (2 - \theta^2) - \frac{1}{4} \theta^2 \Big]^2 \\
        =& 1 - (1 + \frac{5}{4}) \theta^2 + \big( \frac{1}{4} + \frac{1}{2} \big) \theta^4 - \Big( 1 - \frac{1}{2} \theta^2 - \frac{1}{4} \theta^2 \Big)^2 \\
        =& 1 - \frac{9}{4} \theta^2 + \frac{3}{4} \theta^4 
        - \big( 1 - \frac{3}{4} \theta^2 \big)^2 
        = 1 - \frac{9}{4} \theta^2 + \frac{3}{4} \theta^4 
        - \big( 1 - \frac{3}{2} \theta^2 + \frac{9}{16} \theta^4 \big) \\
        =& \cancel{1} - \frac{9}{4} \theta^2 + \frac{3}{4} \theta^4 
        \cancel{- 1} + \frac{3}{2} \theta^2 - \frac{9}{16} \theta^4  
        = \big( - \frac{9}{4} + \frac{3}{2} \big) \theta^2  
        + \big( \frac{3}{4} - \frac{9}{16} \big) \theta^4 \\ 
        =& \big( - \frac{9}{4} + \frac{6}{4} \big) \theta^2
        + \big( \frac{12}{16} - \frac{9}{16} \big) \theta^4 
        = - \frac{3}{4} \theta^2 + \frac{3}{16} \theta^4 
        = - \frac{3}{4} \theta^2 (1 - \frac{1}{4} \theta^2)
    \end{align*}
	Hence \eqref{eq:ineq-dt-eq2} becomes
    \begin{align*}
        &a^4 \Big( \nu \Delta t \lambda_1 \Big)  \Big[- \frac{3}{4} \theta^2 (1 - \frac{1}{4} \theta^2) \Big] + 2 a_1^6 > 0 
        \ \Leftrightarrow \ 
        a^4 \Big( \nu \Delta t \lambda_1 \Big)  \Big[\frac{3}{4} \theta^2 (1 - \frac{1}{4} \theta^2) \Big] < 2 a_1^6 \\
        & \Big( \nu \Delta t \lambda_1 \Big)  \Big[\frac{3}{4} \theta^2 (1 - \frac{1}{4} \theta^2) \Big] < 2 a_1^2 
        \ \Leftrightarrow \ 
        \Big( \nu \Delta t \lambda_1 \Big)  \Big[\frac{3}{4} \theta^{\bcancel{2}} (1 - \frac{1}{4} \theta^2) \Big] < \cancel{2} \frac{ \bcancel{\theta} (1 - \theta^2)}{\cancel{2}} \\
        &\Big( \nu \Delta t \lambda_1 \Big)  \Big[\frac{3}{4} \theta (1 - \frac{1}{4} \theta^2) \Big] < (1 - \theta^2)
        \ \Leftrightarrow \ 
        \nu \Delta t \lambda_1 < \frac{4 (1 - \theta^2)}{3 \theta (1 - \frac{1}{4} \theta^2)} \ \Leftrightarrow \ 
        \Delta t < \frac{4 (1 - \theta^2)}{3 \nu \lambda_1 \theta (1 - \frac{1}{4} \theta^2)}.
    \end{align*}
	\normalcolor
\end{confidential}
		We need $\frac{\nu \Delta t \lambda_1}{2} - x > 0$ and obtain 
		\begin{align*}
			\Delta t < \frac{4 (1 - \theta^2)}{3 \nu \lambda_1 \theta (1 - \frac{1}{4} \theta^2)}, 
		\end{align*}
		which is less restrictive than \eqref{eq:dt-cond-2}. Hence we keep the time step limit in \eqref{eq:dt-cond-2}. 
\begin{confidential}
	\color{darkblue}
	Then we compare the time step restriction above with \eqref{eq:dt-cond-2}. We first show:  for all $\theta \in (0,1)$
    \begin{align*}
        &\frac{2 (1 - \theta)}{\bcancel{\nu} \cancel{\lambda_1}} < \frac{4 (1 - \theta^2)}{3 \bcancel{\nu} \cancel{\lambda_1}\theta (1 - \frac{1}{4} \theta^2)} \ \Leftrightarrow \          
        2 \cancel{(1 - \theta)} < \frac{4 \cancel{(1 - \theta)}(1 + \theta)}{3 \theta (1 - \frac{1}{4} \theta^2)} \ \Leftrightarrow \       
        1 < \frac{2 (1 + \theta)}{3 \theta (1 - \frac{1}{4} \theta^2)} \\
        &3 \theta (1 - \frac{1}{4} \theta^2) < 2 (1 + \theta) 
        \ \Leftrightarrow \ 
        3 \theta \big( 1 - \frac{1}{2} \theta \big) \big( 1 + \frac{1}{2} \theta \big) < 2 (1 + \theta).
    \end{align*}
	For lefthand side, we have 
    \begin{align*}
        3 \theta \big( 1 - \frac{1}{2} \theta \big) \big( 1 + \frac{1}{2} \theta \big) 
        \leq 3 \Big( \frac{\theta + 1 \cancel{- \frac{1}{2}\theta} + 1 \cancel{+ \frac{1}{2} \theta} }{3} \Big)^3 
        \leq 3 \Big( \frac{2 + \theta}{3} \Big) \Big( \underbrace{\frac{2 + \theta}{3}}_{ \in (\frac{2}{3},1)} \Big)^2 
        < (2 + \theta) < 2 (1 + \theta).
    \end{align*}
	\normalcolor
\end{confidential}
		By \eqref{eq:sol-epsi} and \eqref{eq:B-C-h22}, we have 
		\begin{align}
			\frac{1}{\epsilon}
			= \frac{\sqrt{B^2 + 4 C h_{22}} + \big(2 h_{22} - B \big)}{2 \big( B + C - h_{22} \big)} 
			= \frac{\sqrt{B^2 + 4 C h_{22}} + \big(2 h_{22} - B \big)}{2 \big( \frac{\nu \Delta t \lambda_1}{2} - x \big)}
			\label{eq:sol-1-epsi}
		\end{align}	
\begin{confidential}
	\color{darkblue}
	\begin{align*}
        \frac{1}{\epsilon}
        =& \frac{2 h_{22}}{\sqrt{B^2 + 4 C h_{22}} - \big(2 h_{22} - B \big)} \\
        =& \frac{2 h_{22} \Big[ \sqrt{B^2 + 4 C h_{22}} + \big(2 h_{22} - B \big) \Big]}{\Big[\sqrt{B^2 + 4 C h_{22}} - \big(2 h_{22} - B \big)\Big] \Big[ \sqrt{B^2 + 4 C h_{22}} + \big(2 h_{22} - B \big) \Big]} \\
        =& \frac{2 h_{22} \Big[ \sqrt{B^2 + 4 C h_{22}} + \big(2 h_{22} - B \big) \Big]}{ B^2 + 4 C h_{22} - \big(2 h_{22} - B \big)^2} \\
        =& \frac{2 h_{22} \Big[ \sqrt{B^2 + 4 C h_{22}} + \big(2 h_{22} - B \big) \Big]}{ B^2 + 4 C h_{22} - \big(4 h_{22}^2 + B^2 - 4 h_{22} B \big)} \\
        =& \frac{2 h_{22} \Big[ \sqrt{B^2 + 4 C h_{22}} + \big(2 h_{22} - B \big) \Big]}{ \cancel{B^2} + 4 C h_{22} - 4 h_{22}^2 \cancel{- B^2} + 4 h_{22} B } \\
        =& \frac{2 \cancel{h_{22}} \Big[ \sqrt{B^2 + 4 C h_{22}} + \big(2 h_{22} - B \big) \Big]}{ 4 \cancel{h_{22}} \big( C - h_{22} + B \big)} 
        = \frac{\sqrt{B^2 + 4 C h_{22}} + \big(2 h_{22} - B \big)}{2 \big( B + C - h_{22} \big)}.
    \end{align*}
	\normalcolor
\end{confidential}
		We claim that under the time step restriction in \eqref{eq:dt-limit-1}, $x < \frac{\nu \Delta t \lambda_1}{4}$. 
\begin{confidential}
	\color{darkblue}
	\begin{align*}
        &\frac{ 4 \Big( \nu \Delta t \lambda_1 \Big)^{\cancel{2}} \bigg\{ 2 \beta_2 \beta_1 \beta_0 \Big( \nu \Delta t \lambda_1 \Big) - a_1^2 \Big[ \beta_1 (\beta_2 + \beta_0 ) + 4 \beta_2 \beta_0 \Big] \bigg\}^2 }{ 16 \Big( a_1^2 + 2 \nu \Delta t \lambda_1 \beta_2 \beta_0 \Big) \Big( a_1^2 - \nu \Delta t \lambda_1 \beta_1 \beta_0 \Big) \Big( a_1^2 - \nu \Delta t \lambda_1 \beta_1 \beta_2 \Big) } 
        < \frac{1}{4} \cancel{\nu \Delta t \lambda_1} \\
		\\
        &4 \Big( \nu \Delta t \lambda_1 \Big) \bigg\{ 2 \beta_2 \beta_1 \beta_0 \Big( \nu \Delta t \lambda_1 \Big) - a_1^2 \Big[ \beta_1 (\beta_2 + \beta_0 ) + 4 \beta_2 \beta_0 \Big] \bigg\}^2 \\
        &< 4 \Big( 2 \nu \Delta t \lambda_1 \beta_2 \beta_0 + a_1^2 \Big) \Big( a_1^2 - \nu \Delta t \lambda_1 \beta_1 \beta_0 \Big) \Big( a_1^2 - \nu \Delta t \lambda_1 \beta_1 \beta_2 \Big) \\
		\\
        &\Big( \nu \Delta t \lambda_1 \Big) \bigg\{ 2 \beta_2 \beta_1 \beta_0 \Big( \nu \Delta t \lambda_1 \Big) - a_1^2 \Big[ \beta_1 (\beta_2 + \beta_0 ) + 4 \beta_2 \beta_0 \Big] \bigg\}^2 \\
        &< \Big( 2 \nu \Delta t \lambda_1 \beta_2 \beta_0 + a_1^2 \Big) \Big( a_1^2 - \nu \Delta t \lambda_1 \beta_1 \beta_0 \Big) \Big( a_1^2 - \nu \Delta t \lambda_1 \beta_1 \beta_2 \Big) \\
		\\
        & \Big( \nu \Delta t \lambda_1 \Big) \bigg\{ 4 \beta_2^2 \beta_1^2 \beta_0^2 \Big( \nu \Delta t \lambda_1 \Big)^2
        + a_1^4 \Big[ \beta_1 (\beta_2 + \beta_0 ) + 4 \beta_2 \beta_0 \Big]^2 - 4 a_1^2 \beta_2 \beta_1 \beta_0 \Big[ \beta_1 (\beta_2 + \beta_0 ) + 4 \beta_2 \beta_0 \Big] \Big( \nu \Delta t \lambda_1 \Big) \bigg\} \\
        &< \Big( 2 \nu \Delta t \lambda_1 \beta_2 \beta_0 + a_1^2 \Big) \Big[ a_1^4 - \Big( \nu \Delta t \lambda_1 \beta_1 \beta_0 + \nu \Delta t \lambda_1 \beta_1 \beta_2 \Big) a_1^2 + \Big( \nu \Delta t \lambda_1 \Big)^2 \beta_2 \beta_1^2 \beta_0 \Big] \\
		\\
        &4 \beta_2^2 \beta_1^2 \beta_0^2 \Big( \nu \Delta t \lambda_1 \Big)^3 + a_1^4 \Big[ \beta_1 (\beta_2 + \beta_0 ) + 4 \beta_2 \beta_0 \Big]^2 \Big( \nu \Delta t \lambda_1 \Big)
        - 4 a_1^2 \beta_2 \beta_1 \beta_0 \Big[ \beta_1 (\beta_2 + \beta_0 ) + 4 \beta_2 \beta_0 \Big] \Big( \nu \Delta t \lambda_1 \Big)^2 \\
        &< \Big[ 2 \beta_2 \beta_0 \Big( \nu \Delta t \lambda_1 \Big) + a_1^2 \Big] \Big[ a_1^4 - \Big( \nu \Delta t \lambda_1 \beta_1 \beta_0 + \nu \Delta t \lambda_1 \beta_1 \beta_2 \Big) a_1^2 + \Big( \nu \Delta t \lambda_1 \Big)^2 \beta_2 \beta_1^2 \beta_0 \Big] \\
		\\
        &\underbrace{4 \beta_2^2 \beta_1^2 \beta_0^2 \Big( \nu \Delta t \lambda_1 \Big)^3} + a_1^4 \Big[ \beta_1 (\beta_2 + \beta_0 ) + 4 \beta_2 \beta_0 \Big]^2 \Big( \nu \Delta t \lambda_1 \Big)
        - 4 a_1^2 \beta_2 \beta_1 \beta_0 \Big[ \beta_1 (\beta_2 + \beta_0 ) + 4 \beta_2 \beta_0 \Big] \Big( \nu \Delta t \lambda_1 \Big)^2 \\
        &< 2 a_1^4 \beta_2 \beta_0 \Big( \nu \Delta t \lambda_1 \Big) - 2 a_1^2 \beta_2 \beta_0 \Big( \nu \Delta t \lambda_1 \Big) \beta_1 (\beta_2 + \beta_0) \Big( \nu \Delta t \lambda_1 \Big) + \underbrace{2 \beta_2^2 \beta_1^2 \beta_0^2 \Big( \nu \Delta t \lambda_1 \Big)^3} \\
        &+ a_1^6 - a_1^4 \beta_1 (\beta_2 + \beta_0) \Big( \nu \Delta t \lambda_1 \Big) + a_1^2 \beta_2 \beta_1^2 \beta_0 \Big( \nu \Delta t \lambda_1 \Big)^2 \\
		\\ 
        &2 \beta_2^2 \beta_1^2 \beta_0^2 \Big( \nu \Delta t \lambda_1 \Big)^3
        + a_1^4 \Big[ \beta_1 (\beta_2 + \beta_0 ) + 4 \beta_2 \beta_0 \Big]^2 \Big( \nu \Delta t \lambda_1 \Big)
        - 4 a_1^2 \beta_2 \beta_1 \beta_0 \Big[ \beta_1 (\beta_2 + \beta_0 ) + 4 \beta_2 \beta_0 \Big] \Big( \nu \Delta t \lambda_1 \Big)^2 \\
        &<2 a_1^4 \beta_2 \beta_0 \Big( \nu \Delta t \lambda_1 \Big) - a_1^4 \beta_1 (\beta_2 + \beta_0) \Big( \nu \Delta t \lambda_1 \Big) 
        - 2 a_1^2 \beta_2 \beta_1 \beta_0 (\beta_2 + \beta_0) \Big( \nu \Delta t \lambda_1 \Big)^2 
        + a_1^2 \beta_2 \beta_1^2 \beta_0 
        \Big( \nu \Delta t \lambda_1 \Big)^2 + a_1^6. 
    \end{align*}
	Hence 
    \begin{align}
        &- 2 \beta_2^2 \beta_1^2 \beta_0^2 \Big( \nu \Delta t \lambda_1 \Big)^3 \notag \\
        &+ 4 a_1^2 \beta_2 \beta_1 \beta_0  \Big( \nu \Delta t \lambda_1 \Big)^2 \Big[ \beta_1 (\beta_2 + \beta_0 ) + 4 \beta_2 \beta_0 - \frac{1}{2} (\beta_2 + \beta_0) \Big] 
        + a_1^2 \beta_2 \beta_1^2 \beta_0 
        \Big( \nu \Delta t \lambda_1 \Big)^2 \notag \\
        &+ a^4 \Big( \nu \Delta t \lambda_1 \Big)
        \bigg\{ 2 \beta_2 \beta_0 - \beta_1 (\beta_2 + \beta_0) - \Big[ \beta_1 (\beta_2 + \beta_0 ) + 4 \beta_2 \beta_0 \Big]^2 \bigg\} + a_1^6 > 0  \notag \\
		\notag \\
		&- 2 \beta_2^2 \beta_1^2 \beta_0^2 \Big( \nu \Delta t \lambda_1 \Big)^3 \notag \\
        &+ 4 a_1^2 \beta_2 \beta_1 \beta_0  \Big( \nu \Delta t \lambda_1 \Big)^2 \Big\{ \Big[ \beta_1 (\beta_2 + \beta_0 ) + 4 \beta_2 \beta_0 - \frac{1}{2} (\beta_2 + \beta_0) \Big] + \frac{1}{4} \beta_1 \Big\} \notag \\
        &+ a^4 \Big( \nu \Delta t \lambda_1 \Big)
        \bigg\{ 2 \beta_2 \beta_0 - \beta_1 (\beta_2 + \beta_0) - \Big[ \beta_1 (\beta_2 + \beta_0 ) + 4 \beta_2 \beta_0 \Big]^2 \bigg\} +  a_1^6 > 0, 
		\label{eq:ineq2-dt-gen}
    \end{align}
	\begin{align*}
        &\beta_1 (\beta_2 + \beta_0 ) + 4 \beta_2 \beta_0 - \frac{1}{2} (\beta_2 + \beta_0) \\
        =& \big( \frac{1}{2} \theta^2 \big) \Big[ \frac{1}{4} \big( 2 + \theta - \theta^2 \big) + \frac{1}{4} \big( 2 - \theta - \theta^2 \big) \Big] + 4 \cdot \frac{1}{4} \big( 2 + \theta - \theta^2 \big) 
        \cdot \frac{1}{4} \big( 2 - \theta - \theta^2 \big) \\
        &- \frac{1}{2} \Big[ \frac{1}{4} \big( 2 + \theta - \theta^2 \big) + \frac{1}{4} \big( 2 - \theta - \theta^2 \big) \Big] \\
        =& \big( \frac{1}{8} \theta^2 \big) \Big[ 2 \cancel{+ \theta} - \theta^2 + 2 \cancel{- \theta} - \theta^2 \Big]
        + \frac{1}{4} \big( 2 - \theta^2 + \theta \big) \big( 2 - \theta^2 - \theta \big) \\
        &- \frac{1}{8} \Big[ 2 \cancel{+ \theta} - \theta^2 + 2 \cancel{- \theta} - \theta^2 \Big] \\
        =& \big( \frac{1}{4} \theta^2 \big) (2 - \theta^2) 
        + \frac{1}{4} \Big[ (2 - \theta^2)^2 - \theta^2 \Big]
        - \frac{1}{8} 2 (2 - \theta^2) \\
        =& \frac{1}{4} \theta^2 (2 - \theta^2) + \frac{1}{4} (2 - \theta^2)^2 - \frac{1}{4} \theta^2 - \frac{1}{4} (2 - \theta^2) \\
        =& \frac{1}{4} (2 - \theta^2) \big( \cancel{\theta^2} + 2 \cancel{- \theta^2} \big) - \frac{1}{4} \theta^2 - \frac{1}{4} (2 - \theta^2) \\
        =& \frac{1}{2} (2 - \theta^2) - \frac{1}{4} (2 - \theta^2)
        - \frac{1}{4} \theta^2 
        = \frac{1}{4} (2 - \theta^2) - \frac{1}{4} \theta^2  \\
        =& \frac{1}{2} - \frac{1}{4} \theta^2 - \frac{1}{4} \theta^2 
        = \frac{1}{2} - \Big( \frac{1}{4} + \frac{1}{4} \Big) \theta^2
        = \frac{1}{2} - \frac{1}{2} \theta^2.
    \end{align*}
	\begin{align*}
        &\beta_1 (\beta_2 + \beta_0 ) + 4 \beta_2 \beta_0 - \frac{1}{2} (\beta_2 + \beta_0) + \frac{1}{4} \beta_1 \\
        =& \frac{1}{2} - \frac{1}{2} \theta^2
        + \frac{1}{4} \cdot \frac{1}{2}\theta^2 
        = \frac{1}{2} - \Big( \frac{1}{2} - \frac{1}{8} \Big) \theta^2 
        = \frac{1}{2} - \Big( \frac{4}{8} - \frac{1}{8} \Big) \theta^2 
        = \frac{1}{2} - \frac{3}{8} \theta^2 
        = \frac{1}{2} \big( 1 - \frac{3}{4} \theta^2 \big).
    \end{align*}
	\begin{align*}
        &2 \beta_2 \beta_0 - \beta_1 (\beta_2 + \beta_0) - \Big[ \beta_1 (\beta_2 + \beta_0 ) + 4 \beta_2 \beta_0 \Big]^2 \\
        =& 2 \cdot \frac{1}{4} \big( 2 + \theta - \theta^2 \big) 
        \cdot \frac{1}{4} \big( 2 - \theta - \theta^2 \big)
        - \big( \frac{1}{2} \theta^2 \big) \Big[ \frac{1}{4} \big( 2 + \theta - \theta^2 \big) + \frac{1}{4} \big( 2 - \theta - \theta^2 \big) \Big] \\
        &- \bigg\{ \big( \frac{1}{2} \theta^2 \big) \Big[ \frac{1}{4} \big( 2 + \theta - \theta^2 \big) + \frac{1}{4} \big( 2 - \theta - \theta^2 \big) \Big] + 4 \cdot \frac{1}{4} \big( 2 + \theta - \theta^2 \big) \cdot \frac{1}{4} \big( 2 - \theta - \theta^2 \big) \bigg\}^2 \\
        =& \frac{1}{8} \big( 2 - \theta^2 + \theta \big) \big( 2 - \theta^2 - \theta \big) - \theta^2 \frac{1}{8} \big( 2 \cancel{+ \theta} - \theta^2 + 2 \cancel{- \theta} - \theta^2 \big) \\
        &- \bigg\{ \frac{1}{8} \theta^2 \big( 2 \cancel{+ \theta} - \theta^2 + 2 \cancel{- \theta} - \theta^2 \big) + \frac{1}{4} \big( 2 - \theta^2 + \theta \big) \big( 2 - \theta^2 - \theta \big) \bigg\}^2 \\
        =& \frac{1}{8} \Big[ (2 - \theta^2)^2 - \theta^2 \Big] 
        - \frac{1}{8} \theta^2 \cdot 2 (2 - \theta^2) 
        - \bigg\{ \frac{1}{8} \theta^2 \cdot 2 \big( 2 - \theta^2 \big) + \frac{1}{4} \Big[ (2 - \theta^2)^2 - \theta^2 \Big] \bigg\}^2 \\
        =& \frac{1}{8} (2 - \theta^2)^2 - \frac{1}{8} \theta^2 
        - \frac{1}{4} \theta^2 (2 - \theta^2)
        - \bigg\{ \frac{1}{4} \theta^2 \big( 2 - \theta^2 \big) + \frac{1}{4} (2 - \theta^2)^2 - \frac{1}{4} \theta^2 \bigg\}^2 \\
        =& \frac{1}{8} \big( 4 - 4 \theta^2 + \theta^4 \big) - \frac{1}{8} \theta^2 - \frac{1}{2} \theta^2 + \frac{1}{4} \theta^4  
        - \bigg\{ \frac{1}{4} \big( 2 - \theta^2 \big) ( \cancel{\theta^2} + 2 \cancel{- \theta^2}) - \frac{1}{4} \theta^2 \bigg\}^2 \\
        =& \frac{1}{2} - \frac{1}{2} \theta^2 + \frac{1}{8} \theta^4 - \frac{5}{8} \theta^2
        + \frac{1}{4} \theta^4 - \Big[ \frac{1}{2} (2 - \theta^2) - \frac{1}{4} \theta^2 \Big]^2 \\
        =& \frac{1}{2} - (\frac{1}{2} + \frac{5}{8}) \theta^2 + \big( \frac{1}{8} + \frac{1}{4} \big) \theta^4 - \Big( 1 - \frac{1}{2} \theta^2 - \frac{1}{4} \theta^2 \Big)^2 \\
        =& \frac{1}{2} - \frac{9}{8} \theta^2 + \frac{3}{8} \theta^4 
        - \big( 1 - \frac{3}{4} \theta^2 \big)^2 \\
        =& \frac{1}{2} \Big(1 - \frac{9}{4} \theta^2 + \frac{3}{4} \theta^4 \Big)
        - \big( 1 - \frac{3}{4} \theta^2 \big)^2  
        \\ 
        =& \frac{1}{2} \Big[ 1 - \frac{6}{4} \theta^2 + \frac{9}{16} \theta^4 - \frac{3}{4} \theta^2 + \frac{3}{16} \theta^4 \Big]
        - \big( 1 - \frac{3}{4} \theta^2 \big)^2 \\
        =& \frac{1}{2} \Big[ \Big( 1 -  \frac{3}{4} \theta^2 \Big)^2 
        - \frac{3}{4} \theta^2 \Big( 1 - \frac{1}{4} \theta^2 \Big) \Big]
        - \big( 1 - \frac{3}{4} \theta^2 \big)^2 \\
        =& \frac{1}{2} \Big( 1 -  \frac{3}{4} \theta^2 \Big)^2 
        - \frac{1}{2} \frac{3}{4} \theta^2 \Big( 1 - \frac{1}{4} \theta^2 \Big)
        - \big( 1 - \frac{3}{4} \theta^2 \big)^2 \\ 
        =& - \frac{1}{2} \big( 1 - \frac{3}{4} \theta^2 \big)^2
        - \frac{3}{8} \theta^2 \Big( 1 - \frac{1}{4} \theta^2 \Big).
    \end{align*}
	\normalcolor
\end{confidential}
		The requirement $x < \frac{\nu \Delta t \lambda_1}{4}$ is equivalent to the following inequality 
		\begin{align}
        &- 2 \beta_2^2 \beta_1^2 \beta_0^2 \Big( \nu \Delta t \lambda_1 \Big)^3 
        + 2 \Big( 1 - \frac{3}{4} \theta^2 \Big)  
        a_1^2 \beta_2 \beta_1 \beta_0  \Big( \nu \Delta t \lambda_1 \Big)^2 \notag \\
        &- a^4 \Big( \nu \Delta t \lambda_1 \Big)
        \bigg\{ \frac{1}{2} \big( 1 - \frac{3}{4} \theta^2 \big)^2
        + \frac{3}{8} \theta^2 \Big( 1 - \frac{1}{4} \theta^2 \Big) \bigg\} +  a_1^6 > 0.
        \label{eq:x-dt-cond-2}
    \end{align}
	To ensure that \eqref{eq:x-dt-cond-2} holds, it suffices to have the following two inequalities 
	\begin{align*}
        &- 2 \beta_2^2 \beta_1^2 \beta_0^2 \Big( \nu \Delta t \lambda_1 \Big)^3 
        + 2 \Big( 1 - \frac{3}{4} \theta^2 \Big)  
        a_1^2 \beta_2 \beta_1 \beta_0  \Big( \nu \Delta t \lambda_1\Big)^2 > 0, \\
        &- a^4 \Big( \nu \Delta t \lambda_1 \Big)
        \bigg\{ \frac{1}{2} \big( 1 - \frac{3}{4} \theta^2 \big)^2
        + \frac{3}{8} \theta^2 \Big( 1 - \frac{1}{4} \theta^2 \Big) \bigg\} + a_1^6 > 0. 
    \end{align*}
	For the first equation, 
\begin{confidential}
	\color{darkblue}
    \begin{align*}
        &\cancel{2} \beta_2^2 \beta_1^2 \beta_0^2 \Big( \nu \Delta t \lambda_1 \Big)^{\bcancel{3}} 
        < \cancel{2} \Big( 1 - \frac{3}{4} \theta^2 \Big)  
        a_1^2 \beta_2 \beta_1 \beta_0  \bcancel{\Big( \nu \Delta t \lambda_1 \Big)^2} \\
        &\beta_2 \beta_1 \beta_0 \Big( \nu \Delta t \lambda_1 \Big)
        < \Big( 1 - \frac{3}{4} \theta^2 \Big) \frac{\theta (1 - \theta^2)}{2} \\
        &\frac{1}{4} \big( 2 + \theta - \theta^2 \big) \cancel{\frac{1}{2}} \theta^{\bcancel{2}} 
        \frac{1}{4} \big( 2 - \theta - \theta^2 \big) \Big( \nu \Delta t \lambda_1 \Big) 
        < \Big( 1 - \frac{3}{4} \theta^2 \Big) \frac{\bcancel{\theta} (1 - \theta^2)}{\cancel{2}} \\ 
        &\frac{1}{16} \theta (2 - \theta)\cancel{(1 + \theta)} (2 + \theta)\bcancel{(1 - \theta)} \Big( \nu \Delta t \lambda_1 \Big) 
        < \Big( 1 - \frac{3}{4} \theta^2 \Big) \bcancel{(1 - \theta)} \cancel{(1+\theta)}  \\
        & \Big( \nu \Delta t \lambda_1 \Big)
        < \frac{16 \Big( 1 - \frac{3}{4} \theta^2 \Big)}{ \theta (2 - \theta) (2 + \theta) }
        = \frac{ 4 (4 - 3 \theta^2)}{ \theta (2 - \theta) (2 + \theta) }. 
    \end{align*}
	\begin{align*}
        &\frac{ 4 (4 - 3 \theta^2)}{ \theta (2 - \theta) (2 + \theta) }
        > \frac{4 (4 - 4 \theta^2) }{\theta (2 - \theta) (2 + \theta)}
        = \frac{16 (1 - \theta) (1 + \theta) }{\theta (2 - \theta) (2 + \theta)}, \\
        & \frac{16 (1 - \theta) (1 + \theta) }{\theta (2 - \theta) (2 + \theta)} > 2 (1 - \theta) \ \Leftrightarrow \ 
        \frac{8 (1 + \theta) }{\theta (2 - \theta) (2 + \theta)} > 1
        \ \Leftrightarrow \ 
        8 (1 + \theta) > \theta (2 - \theta) (2 + \theta) \\
        & \theta (2 - \theta) (2 + \theta)
        = 4 ( \frac{1}{2} \theta) (2 - \theta) (1 + \frac{1}{2} \theta) 
        \leq 4 \Big( \frac{ \cancel{\frac{1}{2} \theta} + 2 \cancel{- \theta} + 1 \cancel{+ \frac{1}{2} \theta} }{3} \Big)^3 
        = 4 < 8 (1 + \theta).
    \end{align*}
	\normalcolor
\end{confidential}
		\begin{align*}
			\Delta t 
			< \frac{4 (4 - 3 \theta^2)}{ \nu \lambda_1 \theta (2 - \theta) (2 + \theta)}, 
		\end{align*}
		which is less restrictive the the restriction in \eqref{eq:dt-cond-2}.
		For the second equation, we obtain the time step restriction 
		\begin{align*}
			\Delta t < \frac{8 \theta (1 - \theta^2)}{ \nu \lambda_1 ( 8 - 6 \theta^2 + 3 \theta^4)}.  
		\end{align*}
\begin{confidential}
	\color{darkblue}
    \begin{align*}
        &a^4 \Big( \nu \Delta t \lambda_1 \Big)
        \bigg\{ \frac{1}{2} \big( 1 - \frac{3}{4} \theta^2 \big)^2
        + \frac{3}{8} \theta^2 \Big( 1 - \frac{1}{4} \theta^2 \Big) \bigg\} < a_1^6 \\
        &\Big( \nu \Delta t \lambda_1 \Big)
        \bigg\{ \frac{1}{2} \big( 1 - \frac{3}{4} \theta^2 \big)^2
        + \frac{3}{8} \theta^2 \Big( 1 - \frac{1}{4} \theta^2 \Big) \bigg\} < a_1^2 = \frac{\theta (1 - \theta^2)}{2} \\
		&\nu \Delta t \lambda_1
        < \frac{\frac{1}{2} \theta (1 - \theta^2)}{\frac{1}{2} \big( 1 - \frac{3}{4} \theta^2 \big)^2 + \frac{3}{8} \theta^2 \Big( 1 - \frac{1}{4} \theta^2 \Big)}
		= \frac{\theta (1 - \theta^2)}{ \big( 1 - \frac{3}{4} \theta^2 \big)^2 + \frac{3}{4} \theta^2 \Big( 1 - \frac{1}{4} \theta^2 \Big)}
		= \frac{16 \theta (1 - \theta^2)}{ \big( 4 - 3 \theta^2 \big)^2 + 3 \theta^2 \Big( 4 - \theta^2 \Big)}
    \end{align*}
	\begin{align*}
		&\big( 4 - 3 \theta^2 \big)^2 + 3 \theta^2 \Big( 4 - \theta^2 \Big) 
		= 16 - 24 \theta^2 + 9 \theta^4 + 12 \theta^2 - 3 \theta^4
		= 16 - 12 \theta^2 + 6 \theta^4 
		= 2 ( 8 - 6 \theta^2 + 3 \theta^4). 
	\end{align*}
	\normalcolor
\end{confidential}
		Therefore under the time step restriction in \eqref{eq:dt-limit-1}, we have $x < \frac{\nu \Delta t \lambda_1}{4}$ and by \eqref{eq:sol-1-epsi}
		\begin{align}
			\frac{1}{\epsilon} 
			< \frac{2}{\nu \lambda_1 \Delta t} \Big[ \sqrt{B^2 + 4 C h_{22}} + \big(2 h_{22} - B \big) \Big]. 
			\label{eq:1-epsi-bound-1}
		\end{align}
		By \eqref{eq:note-B-C} and \eqref{eq:dt-limit-1} and the fact that $0 < \big(2 h_{22} - B \big) < \sqrt{B^2 + 4 C h_{22}}$
		\begin{align}
			&\sqrt{B^2 + 4 C h_{22}} + \big(2 h_{22} - B \big) 
			\label{eq:1-epsi-bound-numerator} \\
        	=& 2 \sqrt{\Big( \underbrace{b^2 + \frac{\theta^3}{2} - \nu \Delta t \lambda_1 \beta_1^2}_{ > 0} \Big)^2 
        	+ 4 \Big[ \underbrace{\nu \Delta t \lambda_1 \beta_2^2 + \frac{(1+\theta) (2 + \theta - \theta^2)}{8} - a^2}_{>0} \Big] h_{22}} \notag \\
			\leq& 2 \!\!\sqrt{ \!\Big( b^2 \!+\! \frac{\theta^3}{2} \Big)^2 
        	\!+\! 4 \Big[ \frac{\nu \Delta t \lambda_1 \beta_2^2}{2} \!+\! \frac{(1+\theta) (2 + \theta - \theta^2)}{8} \Big] \!
        	\Big[ \underbrace{c^2 \!+\! \frac{(1 - \theta)(2 \!-\! \theta \!-\! \theta^2)}{8} \!-\! \frac{\nu \Delta t \lambda_1 \beta_0^2}{2}}_{ > 0} \Big] } \notag \\
			\leq& 2 \sqrt{ \Big( b^2 + \frac{\theta^3}{2} \Big)^2 
        	+ 4 \Big[ (1 - \theta) \beta_2^2 + \frac{(1+\theta) (2 + \theta - \theta^2)}{8} \Big] 
        	\Big[ c^2 + \frac{(1 - \theta)(2 - \theta - \theta^2)}{8} \Big]}. \notag 
		\end{align}
\begin{confidential}
	\color{darkblue}
	\begin{align*}
        &\sqrt{B^2 + 4 C h_{22}} + \big(2 h_{22} - B \big) \\
        \leq& 2 \sqrt{B^2 + 4 C h_{22}} \\
        =& 2 \sqrt{\Big( \frac{\nu \Delta t \beta_1^2}{2 C_{p}^{2}} - \frac{\theta^3}{2} - b^2 \Big)^2 
        + 4 \Big[ \frac{\nu \Delta t \beta_2^2}{2 C_{p}^{2}} + \frac{(1+\theta) (2 + \theta - \theta^2)}{8} - a^2 \Big] h_{22}} \\
        =& 2 \sqrt{\Big( \underbrace{b^2 + \frac{\theta^3}{2} - \frac{\nu \Delta t \beta_1^2}{2 C_{p}^{2}}}_{ > 0} \Big)^2 
        + 4 \Big[ \underbrace{\frac{\nu \Delta t \beta_2^2}{2 C_{p}^{2}} + \frac{(1+\theta) (2 + \theta - \theta^2)}{8} - a^2}_{>0} \Big] h_{22}} \\
        \leq& 2 \sqrt{ \big( b^2 + \frac{\theta^3}{2} \big)^2 
        + 4 \Big[ \frac{\nu \Delta t \beta_2^2}{2 C_{p}^{2}} + \frac{(1+\theta) (2 + \theta - \theta^2)}{8} \Big]h_{22} } \\
        =& 2 \sqrt{ \big( b^2 + \frac{\theta^3}{2} \big)^2 
        + 4 \Big[ \frac{\nu \Delta t \beta_2^2}{2 C_{p}^{2}} + \frac{(1+\theta) (2 + \theta - \theta^2)}{8} \Big] 
        \Big[ \underbrace{c^2 + \frac{(1 - \theta)(2 - \theta - \theta^2)}{8} - \frac{\nu \Delta t \beta_0^2}{2 C_{p}^{2}}}_{ > 0} \Big] } \\
        =& 2 \sqrt{ \big( b^2 + \frac{\theta^3}{2} \big)^2 
        + 4 \Big[ \frac{\nu \Delta t \beta_2^2}{2 C_{p}^{2}} + \frac{(1+\theta) (2 + \theta - \theta^2)}{8} \Big] 
        \Big[ c^2 + \frac{(1 - \theta)(2 - \theta - \theta^2)}{8} \Big] } \\
        \leq& 2 \sqrt{ \big( b^2 + \frac{\theta^3}{2} \big)^2 
        + 4 \Big[ (1 - \theta) \beta_2^2 + \frac{(1+\theta) (2 + \theta - \theta^2)}{8} \Big] 
        \Big[ c^2 + \frac{(1 - \theta)(2 - \theta - \theta^2)}{8} \Big] } \\
        \leq& C(\theta).
    \end{align*}
	\normalcolor
\end{confidential}
		By algebraic calculation and the time step restriction in \eqref{eq:dt-limit-1}
		\begin{align}
			a^2 + b^2 + c^2 
			=& x - \nu \Delta t \lambda_1 \big( \beta_2 \beta_1 + \beta_1 \beta_0 + \beta_2 \beta_0 \big) + \frac{3}{4} \theta (1 - \theta^2) \label{eq:a-b-c-sq}\\
			\leq& \frac{\nu \Delta t \lambda_1}{4} - \nu \Delta t \lambda_1 \big( \beta_2 \beta_1 + \beta_1 \beta_0 + \beta_2 \beta_0 \big) + \frac{3}{4} \theta (1 - \theta^2) \notag \\
			\leq& \frac{\nu \Delta t \lambda_1}{4} + \frac{\nu \Delta t \lambda_1}{3} (\underbrace{\beta_2 + \beta_1 + \beta_0}_{=1})^2 
			+ \frac{3}{4} \theta (1 - \theta^2)  \notag \\
            \leq& \frac{(1-\theta)}{12} (9 \theta^2 + 9 \theta + 14). \notag  
		\end{align}
\begin{confidential}
    \color{darkblue}
    \begin{align*}
        a^2 + b^2 + c^2 \leq& \frac{7 \nu \Delta t \lambda_1}{12} 
        + \frac{3}{4} \theta (1 - \theta^2) 
        \leq \frac{7 \bcancel{\nu \lambda_1}}{12} \frac{2(1 - \theta)}{\bcancel{\nu \lambda_1}} + \frac{3}{4} \theta (1 - \theta^2) \\  
        =& \frac{7 (1-\theta)}{6} + \frac{3}{4} \theta (1 - \theta^2)
        = \frac{(1-\theta)}{12} \big[ 14 + 9 \theta (1+\theta) \big] 
    \end{align*}
    \normalcolor
\end{confidential}
		We combine \eqref{eq:1-epsi-bound-1}-\eqref{eq:a-b-c-sq} to have 
		$\frac{1}{\epsilon} < \frac{C_{\epsilon}(\theta)}{\nu \lambda_1 \Delta t}$ for some positive constant $C_{\epsilon}(\theta)$ only depending on $\theta$. 
		Then we claim $\epsilon$ has a upper bound: by \eqref{eq:sol-epsi},\eqref{eq:h22-sol}, \eqref{eq:B-C-h22} and \eqref{eq:dt-limit-1}
		\begin{align*}
			\epsilon = \frac{2 \Big( \frac{\nu \Delta t \lambda_1}{2} - x \Big)}{\sqrt{ B^2 + 4 Ch_{22}} + (2h_{22} - B)}
			\leq \frac{\nu \Delta t \lambda_1}{ 2(2h_{22} - B)}
			\leq \frac{\nu \Delta t \lambda_1}{ 4 h_{22}}
			< \frac{\frac{8 \theta (1 - \theta^2)}{( 8 - 6 \theta^2 + 3 \theta^4)}}{ 4 \frac{\theta (1 - \theta)^2 (1 + \theta)(2 + \theta)}{16}} < 4.
		\end{align*}
		Hence we have proved $\frac{1}{\epsilon} > \frac{1}{4}$. 
\begin{confidential}
	\color{darkblue}
	\begin{align*}
		\epsilon =& \frac{\sqrt{B^2 + 4 C h_{22}} - (2 h_{22} - B) }{2 h_{22}} 
		= \frac{B^2 + 4 C h_{22} - (2 h_{22} - B)^2}{2 h_{22} \big[ \sqrt{B^2 + 4 C h_{22}} + (2 h_{22} - B) \big]} \\
		=& \frac{B^2 + 4 C h_{22} - (4 h_{22}^2 - 4Bh_{22} + B^2)}{2 h_{22} \big[ \sqrt{B^2 + 4 C h_{22}} + (2 h_{22} - B) \big]}
		= \frac{\cancel{B^2} + 4 C h_{22} - 4 h_{22}^2 + 4Bh_{22} \cancel{- B^2}}{2 h_{22} \big[ \sqrt{B^2 + 4 C h_{22}} + (2 h_{22} - B) \big]} \\
		=& \frac{4 \cancel{h_{22}}(C + B - h_{22})}{2 \cancel{h_{22}} \big[ \sqrt{B^2 + 4 C h_{22}} + (2 h_{22} - B) \big]}
		= \frac{2(C + B - h_{22})}{\big[ \sqrt{B^2 + 4 C h_{22}} + (2 h_{22} - B) \big]} \\
		\leq& \frac{ \cancel{2}(C + B - h_{22})}{ \cancel{2} (2 h_{22} \underbrace{- B}_{>0})} 
		\leq \frac{C + B - h_{22}}{2 h_{22}}
		= \frac{\frac{\nu \Delta t \lambda_1}{2} - x}{2 h_{22}}
		\leq \frac{\nu \Delta t \lambda_1}{4 h_{22}} \\
		\leq& \frac{\nu \Delta t \lambda_1}{4 \frac{ \theta (1 - \theta)^2 (1 + \theta)(2 + \theta)}{16}}
		= \frac{\frac{8 \theta (1 - \theta^2)}{( 8 - 6 \theta^2 + 3 \theta^4)}}{ \frac{\theta (1 - \theta)^2 (1 + \theta)(2 + \theta)}{4}} 
		= \frac{8 \cancel{\theta (1 - \theta^2)}}{( 8 - 6 \theta^2 + 3 \theta^4)} 
		\frac{4}{ \cancel{\theta (1 - \theta)^2} (\underbrace{1 + \theta}_{>1})(\underbrace{2 + \theta}_{>2})} \\
		=& \frac{32}{ (5 + 3 - 6 \theta^2 + 3 \theta^4) \cdot 1 \cdot 2} 
		= \frac{16}{ 5 + 3 (1 - 2 \theta^2 + \theta^4)}
		= \frac{16}{ \underbrace{5 + 3 (1 - \theta^2)^2}_{>5}} 
		< \frac{16}{5} < 4. 
	\end{align*}
	\normalcolor
\end{confidential}
        Finally by \eqref{eq:h22-sol}, \eqref{eq:h11-sol}, and \eqref{eq:a-b-c-sq}, we have a positive upper bound (only depending on $\theta$) for $h_{11}$ and $h_{22}$, which completes the proof of  \eqref{eq:1-epsi-cond}.
\begin{confidential}
    \color{darkblue}
    \begin{align}
        h_{22} 
        =& c^2 + \frac{(1 - \theta)(2 - \theta - \theta^2)}{8} - \frac{\nu \Delta t \lambda_1 \beta_0^2}{2} 
        \label{eq:g22-ubound} \\
        \leq& \frac{(1-\theta)}{12} \big[ 14 + 9 \theta (1+\theta) \big] 
        + \frac{(1 - \theta)(2 - \theta - \theta^2)}{8} = C(\theta) (>0). \notag  
    \end{align}
    \begin{align*}
        h_{11} =& h_{11}^{+} \\
        =& \frac{1}{2} \bigg\{ \underbrace{- \Big( \frac{\nu \Delta t \lambda_1 \beta_1^2}{2} - \frac{\theta^3}{2} - b^2 \Big)}_{>0} 
        + \sqrt{\Big( \frac{\nu \Delta t \lambda_1 \beta_1^2}{2} - \frac{\theta^3}{2} - b^2 \Big)^2 + \underbrace{4 \Big[ \frac{\nu \Delta t \lambda_1 \beta_2^2}{2} + \frac{(1+\theta) (2 + \theta - \theta^2)}{8} - a^2 \Big]h_{22}}_{>0} } \bigg\} \\
        \leq& \frac{1}{2} \bigg\{ b^2 + \frac{\theta^3}{2} 
        + \sqrt{ \Big( b^2 + \frac{\theta^3}{2} \Big)^2 
        + 4 \Big[ \frac{\nu \Delta t \lambda_1 \beta_2^2}{2} + \frac{(1+\theta) (2 + \theta - \theta^2)}{8} \Big] h_{22} } \bigg\} \\
        \leq& \frac{1}{2} \bigg\{ b^2 + \frac{\theta^3}{2} 
        + \sqrt{ \Big( b^2 + \frac{\theta^3}{2} \Big)^2 
        + 4 \Big[ (1 - \theta) \beta_2^2 + \frac{(1+\theta) (2 + \theta - \theta^2)}{8} \Big]h_{22} } \bigg\} < C(\theta) (>0).
    \end{align*}
    \normalcolor
\end{confidential}
	\end{proof}

	\bibliographystyle{abbrv}
	\bibliography{DLN_Longtime_H1_stability_2026}

\end{document}